\theoremstyle{plain}
\newtheorem{theorem}{Theorem}[section]
\newtheorem{lemma}[theorem]{Lemma}
\newtheorem{proposition}[theorem]{Proposition}
\newtheorem{corollary}[theorem]{Corollary}
\theoremstyle{definition}
\newtheorem{definition}[theorem]{Definition}
\newtheorem{example}{Example}
\newtheorem{remark}[theorem]{Remark}
\newtheorem{remarks}[theorem]{Remarks}
\newcommand\<[1]{\langle\,#1\,\rangle}
\newcommand\norm[1]{\Vert #1 \Vert}
\newcommand{\mA}{\mathscr{A}}
\newcommand{\h}{\mathbb{H}}
\newcommand{\gm}{{\Gamma}}
\newcommand{\set}[2]{\{\,{\textstyle#1};\,{\textstyle #2}\,\}}
\newcommand{\mJ}{{\mathscr J}}\newcommand{\mI}{{\mathscr I}}
\newcommand{\mB}{{\mathscr B}}
\newcommand{\fL}{\mathfrak{L}}
\newcommand{\fR}{\mathfrak{R}}
\def\wasabi{\makeatletter %% access private macros
W\raisebox{.05 em}{e}\raisebox{-.05 em}{S}e\raisebox{.05 em}{B}ai
}
\newcommand{\bd}{{\ast\ast}}
\newcommand{\T}{\mathbb{T}}
\newcommand{\C}{\mathbb{C}}
\newcommand{\Z}{\mathbb{Z}}
\newcommand{\N}{\mathbb{N}}
\newcommand{\A}{\mathscr{A}}
\DeclareMathOperator{\supp}{supp}
\newcommand{\lpg}{\mbox{\tiny $L^p(\Gamma)$}}
\newcommand{\lqg}{\mbox{\tiny $L^q(\Gamma)$}}
\numberwithin{equation}{section}
\newcommand\m[1]{\mathrm{m}_{\mbox{\tiny   #1}}}	
\newcommand\restr[2]{{% we make the whole thing an ordinary symbol
  \left.\kern-\nulldelimiterspace % automatically resize the bar with \right
  #1 % the function
  \vphantom{\big|} % pretend it's a little taller at normal size
  \right|_{#2} % this is the delimiter
  }}
\newcommand{\Cf}[1]{\ensuremath \mbox{\large${\mathbf{1}}$}_{#1}}
\newcommand\di[2]{d\left({#1},{#2}\right)}
\begin{document}
\newcommand{\I}{\mathbb{I}}

\title[Arens regularity of ideals]{Special subsets of discrete groups and  Arens regularity  of ideals in Fourier and group algebras}

%\author[Esmailvandi, Filali and Galindo]{R.  Esmailvandi, M. Filali \and  J. Galindo}

\author[Esmailvandi,  Filali, Galindo ]{ Reza Esmailvandi$^1$,  Mahmoud Filali$^2$,  Jorge Galindo$^3$}

\address{$^{1}$ Instituto Universitario de Matem\'aticas y Aplicaciones (IMAC),
Universitat Jaume I, E-12071, Castell\'on, Spain; \hfill\break \noindent E-mail: esmailva@uji.es}

\address{$^{2}$Department of Mathematical Sciences,
    University of Oulu, Oulu Finland;\hfill\break \noindent E-mail:  mfilali@cc.oulu.fi}

\address{$^{3}$ Instituto Universitario de Matem\'aticas y Aplicaciones (IMAC),
Universitat Jaume I, E-12071, Castell\'on, Spain;\hfill\break \noindent E-mail:  jgalindo@uji.es}

%\thanks{ Research of  the second named  author  supported by Universitat Jaume I, grant UJI-B2019-08}
 \keywords{Banach algebra,  Arens products, Fourier algebra, group algebra, Riesz sets, Rosenthal sets, $\Lambda(p)$-sets}

\subjclass[2020]{22D15, 43A20, 43A30,  43A46,  47C15}

\date{\today}

\begin{abstract}
Let $ \mA$ be a weakly sequentially complete Banach algebra containing a bounded  approximate identity that is an ideal in its second dual $\mA^\bd$, we call such an algebra a $\wasabi$ algebra.
 In the present paper we examine the Arens regularity properties of closed ideals of algebras in the \wasabi class. We observe that, although \wasabi algebras are always strongly Arens irregular, a variety of Arens regularity properties can be observed within their closed ideals.

    After characterizing   Arens regular ideals and strongly Arens irregular ideals, we proceed to  particularize to the main examples of \wasabi algebras, the convolution group algebras $L^1(G)$, $G$ compact, and the Fourier algebras $A(\Gamma)$, $\Gamma$ discrete and amenable. We find examples of Arens regular ideals in $L^1(G)$ and $A(\Gamma)$, both reflexive and nonreflexive and examples of strongly Arens irregular ideals that are not in the \wasabi class.      For this, we construct, in many noncommutative groups, a new class of Riesz sets which are not $\Lambda(p)$, for any $p>1$. Our approach also shows that every infinite Abelian group contains a Rosenthal set that is not $\Lambda(p)$, for any $p>0$. These latter results  could be of independent interest.
      \end{abstract}
\maketitle

\section{Introduction}
There are two  natural ways to define a multiplication in the  second dual  $\mA^\bd$ of a Banach algebra $\mA$ in such a way   that   the natural embedding of $\mA$ into $\mA^\bd$ is made into a Banach algebra homomorphism. This was observed,
 over seventy years ago, by R. Arens, \cite{A1} and \cite{A2}. Arens products, as these multiplications came to be known,  have since been extensively studied, see, e.g.,
\cite{civin-yood},\cite{Day},  \cite{pym}, \cite{Young} or, more recently,  \cite{balapy}, \cite{DL05}, \cite{DLS},
 \cite{filagali18}, \cite{filagali20}, \cite{filagali21}, \cite{filagali22}, \cite{FiSi}, \cite{ulger99} and the references therein.

 It was soon recognized that the interplay between both products, or, rather, how their properties compare, reflects some of the properties of the algebra. In most  prior research,  with the notable exception of   \cite{ulger99},  a fundamental  tool that consistently proves indispensable  is the utilization of  bounded (right or/and left) approximate identities. The reason for this can be glimpsed at  the beginning of Section \ref{Grosser}.

In this paper, we are primarily concerned  with  Arens products on  closed ideals of the  Banach algebras that  constitute  the $\ell^1$-side of Abstract Harmonic Analysis (as opposed to  the  $C^\ast$-side): group algebras and Fourier algebras.
%More specifally with  the topological and algebraic properties of their   Arens multiplications.
These ideals need not possess    bounded approximate identities   and, because of that, many conventional approaches to  Arens (ir-)regularity  do not apply to them. However, it is still possible to  use the bounded approximate identities of their ambient algebras to bring some of their structure back.
%$. With the notable exception of  the paper \cite{ulge99}, this is a little trodden approach .
% here that but  their  ideals may not contain an  (right or/and left) approximate identity,  even if these algebras normally have. This task was launched in  our previous paper \cite{EFG}, where closed ideals of
%the group algebra of a compact Abelian group were studied. We undertake here a  general approach that will encompass and extend  our previous results.
%we shall be concerned with Arens regularity and Arens irregularity of a class of Banach algebras which do not possess a bounded  (right or/and left) approximate identity.
%This is  a little trodden  approach,  the paper \cite{ulge99} being a notable exception.  {\tt Needs to be rewritten}

%\emph{Faute-de-mieux} we have chosen to name the algebras in this class,  \wasabi algebras. Closed ideals in a \wasabi algebra need not be in the class, {\tt but} nonetheless they share some of the characteristic traits of the class, but lack in general a bounded  (right or/and left) approximate identity.
%not all
%{\tt Wasabi is in Lemma 3.7, Proposition 4.3, Theorem 5.1, and Theorem 6.2}

Throughout the paper, we will say that a   Banach algebra $\mA$ is in the \wasabi class (or is a \wasabi  algebra) if $ \mA$ is weakly sequentially complete,   has a bounded approximate identity and  is an ideal in its second dual $\mA^{**}$.  If $\mA$ is in the \wasabi class and
  $pq$ and $p\diamondsuit q$, denote the    two Arens products of elements  $p, q\in \mA^\bd$ (we postpone  definitions to the next section), it is well known that  the translation maps
$q\mapsto pq$ and $q\mapsto q\lozenge p$,  are both weak$^\ast$-continuous only if $p\in \mA$. This is summarized by saying that \wasabi algebras are strongly Arens irregular (or sAir, for sort). This behaviour is not shared by  all the ideals of algebras in the \wasabi class. In fact  \"Ulger \cite{U} proved that in the second dual of  certain nonreflexive ideals of the group algebra of a compact Abelian group,  translations can even be continuous for every $p\in \mA^\bd$. That is, ideals of a \wasabi algebra  can  be Arens regular.

In this paper we  characterize  Arens regularity and  strong Arens irregularity of ideals of \wasabi algebras. We observe that, with few exceptions,   among the ideals of   such an algebra, the entire spectrum of behaviors associated with Arens irregularity can be found.  This task was launched in  our previous paper \cite{EFG}, where closed ideals of
the group algebra of a compact Abelian group were studied.

 Our last Section provides general constructions and examples that materialize these behaviours. We consider there the main examples of algebras in the \wasabi class: the algebra $L^1(G)$,  where $G$ is a compact,  not necessarily commutative, group and the Fourier algebra $A(\Gamma)$ of a discrete amenable group $\Gamma$. Improving, and generalizing to the Fourier algebra context,  constructions of suitable special sets known in  classic Harmonic Analysis, we  find ideals in $L^1(G)$ and $A(\Gamma)$ that are (1) infinite dimensional and reflexive (2)  Arens regular and not reflexive and (3) strongly Arens irregular, not necessarily in the
\wasabi class.
%
%  When $\mA$ is the group algebra of a compact Abelian group,  this result was  obtained originally by \"Ulger in \cite{U} 	and proved again by the authors in \cite{EFG}.
%
%
% \"Ulger asked in \cite{U} the interesting question
% whether Arens regular ideals in the group algebra of a compact Abelian group are necessarily Riesz ideals.
%	
%As already noted in Remark \ref{rem}, the equivalent condition (ii) in Theorem \ref{prop1} for the Arens regularity of $\mJ$ seems to be weaker that $\mJ$ being a Riesz ideal
%in $\mA.$ This points out towards a negative answer to \"Ulger's question, which we are still
%not able to provide, unfortunately.

\section{Notation}
Let $\mA$ be a  Banach algebra and let $\mA^*$ and $\mA^{**}$ be its first and second Banach duals, respectively. The multiplication of $\mA$ can be extended naturally to $\mA^{**}$ in two different ways. These multiplications arise
as particular cases of the abstract approach of R. Arens \cite{A1,A2} and can be formalized through the following three steps. For $a,b \in \mA$ ,  $\varphi\in \mA^*$
and $m,n $ in $\mA^{**}$; we define $\varphi\cdot a, a \cdot \varphi, m \cdot \varphi, \varphi\cdot m $ in $\mA^*$ and $m  n, m \diamondsuit n$ in $\mA^{**}$
as follows:
\begin{align*} \label{eq1}
\langle \varphi\cdot a , b \rangle &= \langle \varphi, a  b \rangle,
\qquad \qquad
\langle a \cdot \varphi  , b \rangle = \langle \varphi, ba \rangle\\
\langle m \cdot \varphi, a \rangle &= \langle m , \varphi\cdot a \rangle,
\qquad
\langle  \varphi\cdot m  , a \rangle = \langle m , a \cdot \varphi\rangle\\
\langle m n , \varphi\rangle &= \langle m , n \cdot \varphi\rangle,
\qquad
\langle  m \diamondsuit n  , \varphi\rangle = \langle n , \varphi\cdot m \rangle.
\end{align*}
When $mn=m\diamondsuit n$ for every $m,n\in \mA^{**}$, $\mA$ is said Arens regular.

We say that a bounded net $(e_\alpha)$ is a bounded right approximate identity (brai for short) in $\mA$,
when for every $a\in \mA,$ $\lim_\alpha ae_\alpha=a$ in norm.
Any weak$^*$-limit $e$ of $(e_\alpha)$ is a right identity in $\mA^{**}$. Bounded left approximate identities (blai) can be analogously defined, and any weak$^*$-limit $e$ of $(e_\alpha)$ is a left identity for $(\mA^{**}, \diamondsuit)$.
The
$(e_\alpha)$ will be a bounded approximate identity (a bai) if it is both brai and blai, and
%a bounded right approximate identity and a left  approximate identity.
%If $(e_\alpha)$ happens to be a bai
%bounded approximate identity (bai for short), then
$e$
is  called then a mixed identity  in $\mA^{**}$.
%We recall the definition of {\it mixed identity} which play a crucial role in our exposition. An element $e \in \mA^{**}$ is said to be a mixed identity if $e$ is a right identity for $(\mA^{**},  )$ and a left identity for $(\mA^{**}, \diamondsuit)$. By \cite[Proposition 2.9.16]{dal00}, an element  $e \in \mA^{**} $ is a mixed identity if and only if $e$ is a weak$^*$ cluster point of some bounded approximate identity (BAI)  in $\mA$. Let $\mA$ be a Banach algebra with BAI, we denote by $\mathcal E$ the set of the mixed identities in $\mA^{**}$.

%\remark
 %Let $\mA$ be a Banach algebra which is an ideal in its second dual. Then
%for each $m,n,p \in \mA^{**}$ and $\varphi\in \mA^*$, we have
%\begin{align}
 %   (m \cdot \phi) \cdot n= m \cdot (\varphi\cdot n).
%\end{align}
%Consequently,
%\begin{align}
 %   (n \diamondsuit p)   m= n \diamondsuit (p   m).
%\end{align}

\definition
Let $ \mA $ be a Banach algebra. Then the topological centres $\mathcal{Z}_1(\A^{**})  $ and $ \mathcal{Z}_2( \mA^{**}) $ of $ \mA^{**} $ are defined as:
\begin{align*}
\mathcal{Z}_1(\mA^{**})& =\lbrace m \in \mA^{**}: \text{the map}~ n\mapsto m   n ~ \text{is weak*-weak* continuous on }\mA^{**} \rbrace\\
&=\lbrace m \in \mA^{**}: m   n = m \diamondsuit n ~\text{for all }~ n \in \mA^{**} \rbrace \\
\mathcal{Z}_2( \mA^{**})
& =\lbrace m \in \mA^{**}: \text{the map}~ n\mapsto n \diamondsuit m ~ \text{is weak*-weak* continuous on }\mA^{**} \rbrace\\
&=\lbrace m \in \mA^{**}: n   m = n \diamondsuit m ~\text{for all }~ n \in \mA^{**} \rbrace.
\end{align*}
It is clear  that $\mA \subset\mathcal{Z}_1(\mA^{**}) \cap \mathcal{Z}_2( \mA^{**})$. The algebra $\mA$ is therefore Arens regular if and only if $$\mathcal{Z}_1(\mA^{**})=\mA^{**} =\mathcal{Z}_2( \mA^{**}).$$
When $\mA$ is commutative both topological centers coincide with the algebraic center
$\{m\in  \mA^\bd\colon mn=nm \mbox{ for all } n\in \mA^\bd\}$.
\definition
Let $ \mA $ be a Banach algebra. Then $ \mA $ is is called left strongly Arens irregular  (lsAir) if
$ \mathcal{Z}_1( \mA^{**}) = \mA $, right strongly Arens irregular  (rsAir) if
$ \mathcal{Z}_2( \mA^{**}) = \mA $, and strongly Arens irregular
(sAir) if
$$\mathcal{Z}_2( \mA^{**}) = \mA = \mathcal{Z}_1( \mA^{**}).$$

%\example
%Let $X=L^1(G)$ as a Banach space for an infinite locally compact group $G$, setting
%\begin{align*}
%    a \cdot x = a \ast x, \qquad x \cdot a =0 \qquad (a \in L^1(G), x \in X).
%\end{align*}
%Then $X$ is a Banach $L^1(G)$-bimodule, and the Banach space $\mA=L^1(G) \oplus_{\ell_1}X$ is a Banach algebra for the product defined by
%\begin{align*}
%    (a,x)(b,y)=(a \ast b , a \ast y).
%\end{align*}
%Note that $\mA^{**} = L^1(G)^{**}\oplus_{\ell_1}X^{**}$ and it can be checked that Arens multiplications satisfies
%\begin{align*}
%    (m , \Phi)   (n,\Psi)
%    &=(m   n, m   \Psi),\\
%    (m , \Phi) \diamondsuit (n,\Psi)
%    &=(m \diamondsuit n, \Phi \diamondsuit n  ),
%\end{align*}
%for $m,n \in L^1(G)^{**}$ and $\Phi,\Psi \in X^{**}$.
%It is now a consequence of \cite{gmm98} that
% $\mA$ is LsAir. Also one can obtain  $\mathcal{Z}_2(\mA)= L^1(G) \oplus X^{**}$. Therefore, $\mA$ is LsAir but it is not RsAir.
%
%
%

\remark
Let $ \mA $ be a Banach $*$-algebra. Then involution $ * $   on $ \mA $ extends to a linear involution on $\mA^{**}  $, and
$$(m   n)^* = n^* \diamondsuit m^*.$$
Thus, we have
\begin{align*}
   m \in \mathcal{Z}_1(\mA^{**}) \qquad \Longleftrightarrow \qquad  m^* \in \mathcal{Z}_2(\mA^{**}).
\end{align*}
Hence, for  a Banach $*$-algebra $ \mA $. lsAir, rsAir and sAir are equivalent.

In \cite{pym}, J. Pym considered the space $WAP(\mA)$ of {\it weakly almost periodic functionals} on $\mA$, this is the set of all $\varphi\in \mA^*$ such that the linear
map
\begin{align*}
    &\mA \longrightarrow \mA^* \\& a \mapsto  a \cdot \varphi\end{align*}
is weakly compact. A necessary and sufficient condition for a functional $\varphi\in \mA^\ast$ to  be in  $ WAP(\mA)$ is to  satisfy Grothendick's double limit criterion
\begin{align*}
    \lim_n \lim_m \langle \varphi, a_n b_m \rangle =  \lim_m \lim_n \langle \varphi, a_n b_m \rangle
\end{align*}
for any pair of bounded sequences $(a_n)_n$, $(b_m)_m$ in $\mA$ for which both the iterated limits exist. From this property, one may deduce that \[\langle m   n , \varphi\rangle = \langle m \diamondsuit n , \phi\rangle\mbox{
for every } m,n \in \mA^{**}\] if and only if $\varphi\in WAP(\mA)$.
So, $\mA$ is
Arens regular if and only if  $WAP(\mA) = \mA^{*}$, i.e., when the quotient $\mA^{*}/WAP(\mA)$
is trivial. This is the motivation for the following definition.

\definition
A Banach algebra $\mA$ is extremely non-Arens regular (enAr
for short) when $\mA^{*}/WAP(\mA)$ contains a closed subspace isomorphic to $\mA^{*}$.

%
%\remark \label{rem2}   (\cite [Lemma 2.1]{dul92}).
%Let $\mA$ be a Banach algebra with a BAI. If $\mA$ is an ideal in its second dual, then
%$$\mA^* \cdot \mA = WAP(\mA) = \mA \cdot \mA^*.$$

%\end{document}

\section{A decomposition of $\mJ^{**}$}\label{Grosser}
When $\mA$ has a brai, $\mA^\bd$ contains  right identities for the first Arens product.  If $e$ is such a right identity, multiplication by $e$  defines a projection on $\mA^\bd$ which produces the  decomposition $p=e  p+(p-e  p)$ for every
$p\in \mA^\bd$. Since $(p-e  p)\in (\mA^*\cdot \mA)^\perp$, we see that $\mA^\bd$ can be described as the internal direct sum
 \begin{equation}\label{desc:a2}\mA^{**}=e\mA^\bd \oplus (\mA^*\cdot \mA)^\perp,\end{equation}
 where $(\mA^*\cdot \mA)^\perp$ is easily identified with the subspace of right annihilators of $\mA^\bd$.
Recall that $\mA^*\cdot\mA=\{\varphi \cdot a:\varphi\in \mA^*,\;a\in \mA\}$ is a closed linear subspace of
$\mA^*$ by Cohen-Hewitt factorisation theorem (see for example \cite[Theorem 32.22]{hewiross2}).
It follows from this decomposition,  that $e\mA^\bd $ is isomorphic to the quotient $\mA^\bd/(\mA^*\cdot \mA)^\perp$. We have thus the following isomorphic decomposition of $\mA^{**}$
\begin{equation}\label{desc:a}\mA^{**}\cong (\mA^*\cdot \mA)^\ast \oplus (\mA^*\cdot \mA)^\perp.\end{equation}
 When  the brai $(e_\alpha)$ is contractive (i.e., of bound $1$) so that $\|e\|=1$,  the Banach algebras  $(\mA^*\cdot \mA)^*$ and $e\mA^{**}$ are not only isomorphic but also
isomorphically isometric.
%Furthermore,
%  is isometric to the
% Banach space direct sum
%If $e$ as any limit point of $(e_\alpha)$ in $\mA^{**}$, one can then observe that  $(\mA^*\mA)^\perp$ is complemented in  $\mA^{**}$, the complement can actually be identified with $e\mA^{**}$ which is then seen to be  isomorphic with  $(\mA^*\mA)^*$ as a Banach algebra. So  \eqref{desc:a} can be read as

If $\mA$ is a  right ideal in $\mA^{**}$ and $(e_\alpha)$ is a bai, the Banach spaces  $\mA^*\cdot\mA$, $\mA\cdot \mA^\ast$
and $WAP(\mA)$ are the same, see \cite[Theorem 1.5]{balapy}. So in this case, \eqref{desc:a} can be written as
 \begin{equation}\label{desc:awap}\mA^{**}\cong WAP(\mA)^*\oplus WAP(\mA)^\perp.\end{equation}

These results go back to  to Grosser \cite{Grosser}. For full details of this decomposition the reader is directed to \cite{balapy}.

Next we need
the multiplier algebra $M(\mA)$ of  $\mA,$  see for instance \cite{dal00}.
For this, let $B(\mA)$ be the Banach algebra of all bounded operators on $\mA$ and  recall that a $ L\in B(\mA)$ is a {\it left  multiplier} of $ \mA $ if $ L(ab) = L(a)b$ for $a,b \in \mA$,
$ R\in B(\mA)$ is a right multiplier of $ \mA $ if $R(ab) = aR(b)$  for $a,b \in \mA$. A
 multiplier  of $ \mA $ is a pair $ (L,R) $, where $ L $ is a left multiplier, $ R $ is a right multiplier, and
$aL(b)=R(a)b$  for $a,b \in \mA$.
Let $\mA$ still have a bai $(e_\alpha)$.
 Then \[M(\mA):= \{(L,R) : (L,R) ~\text{is a multiplier of } \mA\}\] is a closed, unital subalgebra of $ B(\mA) \oplus_\infty  B(\mA)^{op} $.

Furthermore,   $\mA^{**}$ is a Banach $M(\mA)$-bimodule, where the right action of $M(\mA)$ on $\mA^{**}$
is given by  $m\cdot (L,R)=R^{**}(m)$, here
$R^{**}:\mA^{**}\to \mA^{**}$ is the second adjoint of $R$.
So we may consider
the map  \begin{align}\label{eq13}
\fR_e:  M(\mA) \longrightarrow  \mA^{**},
\qquad
 \fR_e\colon (L,R)\mapsto  e.(L,R)=R^{**}(e).
\end{align}
If $(e_\alpha)$ is  contractive,   \cite[Theorem 2.9.49]{dal00} proves that this map
 is an  isometric embedding, Clearly,   $\fR_e(a)=R_a^{**}(e)=a$  for every $a \in \mA$.
Since,
 for each right multiplier $R$ of $\mA$,
\begin{equation}\label{onto}eR^{**}(e)=\lim_\alpha\lim_\beta e_\alpha R(e_\beta)=\lim_\alpha\lim_\beta R(e_\alpha e_\beta)
=\lim_\alpha R(e_\alpha)=R^{**}(e),\end{equation}
the $\fR_e$ map is  clearly into $ e\mA^{**} $.

When $ \mA $ in an ideal in $ \mA^{**} $, the map $ \fR_e$ is in fact onto $ e\mA^{**} $.
To see this, consider for  each $ m \in \mA^{**} $,  the left and right translate $L_m$ and $R_m$
of elements in $\mA$ by $m$.
Since $ \mA $ in an ideal in $ \mA^{**} $,  $ T_m = (L_m , R_m) \in {M(\mA)}, $ and so
 \[ \fR_e(L_m,R_m) = R_m^{**}(e) =e m,\] showing that  $ \fR_e $ is onto.
Therefore, when $ \mA $ in an ideal in $ \mA^{**} $ and  $(e_\alpha)$ is a contractive bai, $ M(\mA)$ is isometrically  isomorphic with  $e\mA^{**} $ (and accordingly with $(\mA\cdot\mA^*)^*$, $(\mA^*\cdot \mA)^*$ and $WAP(\mA)^*$) as a Banach algebra.

We may therefore consider the weak$^*$-topology $\sigma(M(\mA), WAP(\mA))$, we denote this topology by $w$.
So the ${w}$-convergence of a net $((L_\alpha,R_\alpha))_\alpha$ to an element $(L,R)$ in $M(\mA)$ means, by means of the identification given by (\ref{eq13}),
\begin{equation}\label{weak}\lim_\alpha\langle R_\alpha^{**}(e) , \varphi \rangle = \langle R^{**}(e) , \varphi \rangle \qquad (\varphi \in WAP(\mA)),\end{equation}
or equivalently, \[\lim_\alpha\langle L_\alpha^{**}(e) , \varphi \rangle = \langle L^{**}(e) , \varphi \rangle \qquad (\varphi \in WAP(\mA)).\]
The equivalence is due to the fact that, on $WAP(\mA)$, we have
\[L^{**}(e) = e \diamondsuit L^{**}(e) = R^{**}(e) \diamondsuit e = R^{**}(e)  e = R^{**}(e)\]
using  the facts that the first and second Arens products coincide on $WAP(\mA)$ for the third equality, $(L,R)$ is a multiplier for the second equality and $e$ is a mixed identity in $\mA^{**}$ for the rest.

Note now  that if $(L,R)\in M(\mA),$ and $(a_\alpha)$ is a net in $\mA$ with $R^{**}(e)$ as its weak$^*$-limit in $\mA^{**}$, then
\[\lim_\alpha\langle R_{a_\alpha}^{**}(e) , \varphi \rangle = \lim_\alpha\langle a_\alpha , \varphi \rangle=\langle R^{**}(e) , \varphi \rangle \quad\text{for every}\quad\varphi \in WAP(\mA)\]
means that that $M(\mA)=\overline{\mA}^w$.

The relation between $e\mA^\bd $, $M(\mA)$ and $WAP(\mA)$ is pictured in the following diagram
\begin{center}
\begin{tikzcd}[column  sep=9em]
 e  \mA^{**}
\arrow[d] &
\\
 WAP(\mA)^*
\arrow[u]
\arrow[d]
&M(\mA)
\arrow[ul, bend right=20, " \fR_e"]
\arrow[dl,bend left=20, " \fL_e"]
\\ \mA^{**} \diamondsuit e
\arrow[u] &
\end{tikzcd}
\end{center}

Let $\mA$ be still an ideal in $\mA^{**}$ with a  bai
 $(e_\alpha)$ and let $\mJ$ be a
closed  ideal in $\mA$.
For our study of how Arens products behave in $\mJ^{**}$, we shall need  a decomposition similar to that of \ref{desc:a2} for
$\mA^{**}$.
The arguments  used to prove this decomposition cannot be applied directly since in most of the interesting cases studied in the sequel,
$\mJ$ has no brai,
and so there is no right identity in $\mJ^{**}$.
But, as we see next, the use of a bai in $\mA$ helps to recapture the decomposition for $\mJ^{**}$ with the same properties as previously for $\mA^{**}$.  Note that in the case $\mJ=\mA$, this theorem  just brings back the discussion of the precedent paragraphs.

 Let $i:\mJ\to\mA$ be the inclusion map and $i^*$ be its adjoint, i.e.,
$i^*: \mA^*\to \mJ^*$ is the restriction map.

\begin{theorem} \label{8}
Suppose that  $ \mA $ is a  Banach algebra, which is an ideal in its second dual $ \mA^{**}  $,   and has a  bai $ (e_\alpha)_\alpha $. Let $ e $ be a mixed identity of $ \mA^{**} $ associated with $ (e_\alpha)_\alpha $.
Let $ \mJ $ be a  closed ideal of $ \mA $ and $\overline{\mJ}^w $ be its $w$-closure in the multiplier algebra $M(\mA).$ Then the identities \[m=em+(m-em)\;\text{and}\; m=m\diamondsuit e+(m-m\diamondsuit e)\quad, m\in \mJ^\bd,\] give  the decompositions\begin{align}
  \label{desc:j}
\mJ^{**}&=e\mJ^{**}  \oplus (\mJ^*\cdot\mA)^\perp =  e\mJ^{**}  \oplus i^*(WAP(\mA))^\perp,\\
\mJ^{**}&=  \mJ^{**} \diamondsuit e \oplus (\mA\cdot\mJ^* )^\perp=    \mJ^{**} \diamondsuit e   \oplus i^*(WAP(\mA))^\perp.\notag
\end{align}
These decompositions have the following properties.
\begin{enumerate}
\item If $(e_\alpha)_\alpha$ is contractive, $\overline{\mJ}^w $ is a closed ideal of $M(\mA)$ and $ e\mJ^{**}\simeq \overline{ \mJ}^w \simeq\mJ^{**}\diamondsuit e$.
%\item $\fL_e(\wJ) = \mJ^{**}\diamondsuit e$,
%\item $i^*(WAP(\mA))^\perp=(1-e)\mJ^{**}=\mJ^{**}\diamondsuit (1-e),$
\item $\mJ^{**}\,i^*(WAP(\mA))^\perp=i^*(WAP(\mA))^\perp\diamondsuit \mJ^{**}=\{0\}.$
%\item $e(mn)=(m\diamondsuit n)\diamondsuit e$ when $\mA$ is commutative ({\tt I cannot prove it otherwise.}
%\item  $ emn=(em)(en)=(em)\diamondsuit(en)$ for every $n,m\in \mJ^{**}.$ {Lemma \ref{reg}!}
\item $i^*(WAP(\mA))^\perp$ is a closed ideal of $\mJ^{**}.$
\end{enumerate}
\end{theorem}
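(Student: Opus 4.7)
The central idea is to use the mixed identity $e$ of $\mA^{**}$ as an external substitute for a right (respectively left) identity in $\mJ^{**}$, exploiting two facts. First, because $\mJ$ is a closed ideal of $\mA$ and $\mA$ is an ideal in $\mA^{**}$, the bidual $\mJ^{**}$, viewed through $i^{**}$ as $(\mJ^{\perp})^{\perp}\subseteq \mA^{**}$, is a two-sided ideal of $\mA^{**}$ for both Arens products. I would prove this by approximating elements of $\mA^{**}$ by bounded nets in $\mA$, computing $m\cdot n$ and $m\diamondsuit n$ as iterated weak$^{*}$-limits of products inside $\mJ$, and invoking the weak$^{*}$-closedness of $\mJ^{**}$. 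Second, Grothendieck's double-limit criterion (coincidence of both Arens products on $WAP(\mA)$), together with $e\diamondsuit m = m$ and $me = m$, yields the key identity
\[
\langle em, \psi\rangle = \langle m, \psi\rangle = \langle m\diamondsuit e, \psi\rangle
\qquad (m \in \mA^{**},\; \psi \in WAP(\mA)).
\]

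To establish the decomposition, I would first verify $\mJ^{*}\cdot\mA = i^{*}(WAP(\mA))$ as sets, whence $(\mJ^{*}\cdot\mA)^{\perp} = i^{*}(WAP(\mA))^{\perp}$ in $\mJ^{**}$: the inclusion $\subseteq$ comes from Hahn--Banach extensions $\tilde\varphi$ of $\varphi \in \mJ^{*}$ via $\varphi\cdot a = i^{*}(\tilde\varphi\cdot a)$, and the reverse from $i^{*}(\psi\cdot a) = (i^{*}\psi)\cdot a$. For $m \in \mJ^{**}$, the ideal property plus $e^{2}=e$ put $em \in e\mJ^{**}$, while the displayed identity forces $m - em \in i^{*}(WAP(\mA))^{\perp}$. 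The sum is direct because any element of $e\mJ^{**}\cap i^{*}(WAP(\mA))^{\perp}$ lies in $e\mA^{**}$ and vanishes on $WAP(\mA)$; the canonical isomorphism $e\mA^{**}\cong WAP(\mA)^{*}$ from the preceding section then forces it to be zero. The second decomposition is entirely parallel, using $m\diamondsuit e$ in place of $em$.

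Parts (2) and (3) are fast. For (2), $i^{*}(WAP(\mA))^{\perp}\subseteq WAP(\mA)^{\perp}$ as subspaces of $\mA^{**}$, reducing the claim to the identities $\mA^{**}\cdot WAP(\mA)^{\perp} = WAP(\mA)^{\perp}\diamondsuit\mA^{**} = \{0\}$, which follow from the one-line calculation $\langle k\cdot \varphi, a\rangle = \langle k, \varphi\cdot a\rangle = 0$ for $k \in WAP(\mA)^{\perp}$, $\varphi\in\mA^{*}$, $a\in\mA$, since $\varphi\cdot a \in WAP(\mA)$. For (3), closure is automatic; for the ideal property, (2) gives $mk=0\in i^{*}(WAP(\mA))^{\perp}$, while $km \in i^{*}(WAP(\mA))^{\perp}$ follows from $\langle km,\psi\rangle = \langle k\diamondsuit m,\psi\rangle = 0$ for $\psi \in WAP(\mA)$, combining the Arens coincidence on $WAP(\mA)$ with $k\diamondsuit\mA^{**} = 0$.

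The main technical obstacle is part (1). Under the contractive hypothesis, $\fR_{e}\colon M(\mA)\to e\mA^{**}$ is an isometric Banach-algebra isomorphism that, by the very definition of the $w$-topology, carries $w$ to $\sigma(e\mA^{**}, WAP(\mA))$; so the task reduces to proving $\fR_{e}(\overline{\mJ}^{w}) = e\mJ^{**}$. The inclusion $\supseteq$ is Goldstine: any $m \in e\mJ^{**}\subseteq \mJ^{**}$ is a weak$^{*}$-limit of a bounded net in $\mJ$, and this limit persists in the coarser $\sigma$-topology. For the reverse inclusion, I would prove that $e\mJ^{**}$ is $\sigma$-closed in $e\mA^{**}$ by means of the Krein--Smulian theorem: the restriction map $r\colon \mJ^{**}\to WAP(\mA)^{*}$ has kernel $i^{*}(WAP(\mA))^{\perp}$, and the open mapping theorem allows every bounded set in its image to be lifted to a bounded set in $\mJ^{**}$; the image of each weak$^{*}$-compact ball is then weak$^{*}$-compact in $WAP(\mA)^{*}$, which is exactly what Krein--Smulian needs. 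The ideal property of $\overline{\mJ}^{w}$ in $M(\mA)$ is then transported from the ideal property of $\mJ^{**}$ in $\mA^{**}$, noting that multiplication inside $e\mA^{**}$ preserves the equation $em = m$. The symmetric argument with $\fL_{e}$ and the second decomposition yields $\overline{\mJ}^{w}\simeq \mJ^{**}\diamondsuit e$.
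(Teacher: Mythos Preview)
Your treatment of the decomposition itself and of parts (ii) and (iii) is essentially the same as the paper's: you identify $(1-e)\mJ^{**}$ with $i^{*}(WAP(\mA))^{\perp}$ via the identity $\langle em,\psi\rangle=\langle m,\psi\rangle$ on $WAP(\mA)$, and you derive the annihilation and ideal properties from the fact that $\varphi\cdot a\in WAP(\mA)$. Your argument for (iii) is in fact slightly slicker than the paper's, which appeals to introversion of $WAP(\mA)$; you get the same conclusion just by combining (ii) with the coincidence of the two Arens products on $WAP(\mA)$.

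The genuine divergence is in part (i). The paper proves $\fR_{e}(\overline{\mJ}^{w})=e\mJ^{**}$ by direct computation: for the inclusion $\subseteq$ it takes $(L,R)\in\overline{\mJ}^{w}$, picks a $w$-approximating net $(a_{\alpha})\subseteq\mJ$, and shows via the calculation $\lim_{\alpha}\langle a_{\alpha}a,\varphi\rangle=\langle R(a),\varphi\rangle$ that $R(a)\in\mJ$ for every $a\in\mA$ (weak closure of $\mJ$), whence $R^{**}(e)=\lim_{\alpha}R(e_{\alpha})\in\mJ^{**}$; for the surjection it chooses a bounded net in $\mJ$ weak$^{*}$-convergent to $m$, extracts a $w$-convergent subnet, and verifies $em=R^{**}(e)$ by a direct double-limit computation. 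Your route is more topological: you observe that $\fR_{e}$ is a homeomorphism from $(M(\mA),w)$ onto $(e\mA^{**},\sigma)$ and then argue that $e\mJ^{**}$ is exactly the $\sigma$-closure of $\mJ$. Your Goldstine argument for $e\mJ^{**}\subseteq\fR_{e}(\overline{\mJ}^{w})$ is fine and amounts to the paper's subnet extraction. For the reverse inclusion via Krein--Smulian, however, your appeal to the open mapping theorem is circular as stated: applying it to $r\colon\mJ^{**}\to r(\mJ^{**})$ presupposes that $r(\mJ^{**})$ is complete, i.e.\ closed in $WAP(\mA)^{*}$, which is precisely what Krein--Smulian is meant to deliver. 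The fix is easy and does not need open mapping at all: under the contractive hypothesis the identification $e\mA^{**}\cong WAP(\mA)^{*}$ is isometric, and since $\|em\|_{\mJ^{**}}=\|em\|_{\mA^{**}}$ (Hahn--Banach), any $y\in r(\mJ^{**})$ with $\|y\|\le\mu$ is already the image of $em\in\mJ^{**}$ with $\|em\|\le\mu$; then $r(\mJ^{**})\cap B_{\mu}=r(B_{\mu}^{\mJ^{**}})\cap B_{\mu}$ is weak$^{*}$-compact as the intersection of a weak$^{*}$-compact set with a weak$^{*}$-closed one. With this correction your argument goes through. The paper's computational approach is more elementary and makes the correspondence $(L,R)\leftrightarrow em$ explicit; yours is cleaner once the topological picture is set up, but you must avoid the circular step.
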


\begin{proof}
One can proceed, as  with  the decomposition of $\mA^{**}$,
fixing  a right identity $e\in \mA^{**}$,
then split $\mJ^{**}$ as
\begin{equation}\label{direct}\mJ^{**}=e\mJ^{**}\oplus (1-e)\mJ^{**}.\end{equation}
Using the fact that $\mJ$ is an ideal in $\mA,$ this is in fact  the same as \[\mJ^{**}=\left(e\mA^{**}\oplus (1-e)\mA^{**}\right)\cap \mJ^{**}.\]

Let $\overline{\mJ}^w$ be the ${w}$-closure of $\mJ$ in $M(\mA)$.%
%Using this fact and the identity $R^{**}(m)=mR^{**}(e)$ on $WAP(\mA)$, it follows that $\overline\mJ^w$ is a closed ideal of $M(\mA).$
%Another quicker way for deducing that $\overline\mJ^w$ is an ideal of $M(\mA)=\overline{\mA}^w$  is to pass by the  isometric isomorphisms between $M(\mA)$ and $e\mA^{**}$ as already checked,
%and between $\overline{\mJ}^w$ and  $e\mJ^{**}$ as we prove next.
% For this,
%We check that  $\overline{\mJ}^w$ is isometrically isomorphic to $e\mJ^{**}$.
%Note that  $\mA^{**}$ is a Banach $M(\mA)$-bimodule, the right action of $M(\mA)$ on $\mA^{**}$
%being $m.(L,R)=R^{**}(m)$ where
%$R^{**}:\mA^{**}\to \mA^{**}$ is the second adjoint of $R$.

Consider the restriction  of the map defined in \eqref{eq13} to $\overline{\mJ}^w$, denote it also by $\fR_e$. So
\[\fR_e:\overline{\mJ}^w\to \mA^{**},\quad \fR_e(L,R)=R^{**}(e)=e.(L,R).\]
 As already mentioned, when $(e_\alpha)_\alpha$ is contractive,  \cite[Theorem 2.9.49]{dal00} shows that this map
 is an  isometric embedding. The same can be said of the restriction of $\fL_e$ to $\overline{\mJ}^w$,
\[\fL_e:\overline{\mJ}^w\to \mA^{**},\quad \fL_e(L,R)=L^{**}(e)=(L,R)\cdot e.\]
% $\fR_e$ is a a linear isometry is clear and follows precisely as in the case of $\mA$ (see \cite[page 328]{Dales}. Since
%\begin{equation}\label{onto}eR^{**}(e)=\lim_\alpha\lim_\beta e_\alpha %R(e_\beta)=\lim_\alpha\lim_\beta R(e_\alpha e_\beta)
%=\lim_\alpha R(e_\alpha)=R^{**}(e),\end{equation}
%it is also clear that  $\fR_e$ is a homomorphism. Therefore,  $\fR_e$ is an isometric embedding.
Assertion (i)  will then be proved as soon as we check that $\fR_e$ maps $\overline{\mJ}^w$ onto  $e\mJ^{**}$ and $\fL_e$ maps $\overline{\mJ}^w$ onto  $\mJ^{**}\diamondsuit e $.

We first show that the range of $\fR_e$ is contained in $e\mJ^{**}$.
Let $(L,R)\in \overline{\mJ}^w$ and $(a_\alpha)$ be  a net in $\mJ$ which converges
to $(L,R)$ in $M(\mA)$ in the $w$- topology, i.e., $\langle R_{a_\alpha}^{**}(e), \varphi\rangle$ converges to $\langle R^{**}(e), \varphi\rangle$ for every $\varphi\in WAP(\mA)$.
Then,  for every $a\in \mA$ and $\varphi\in\mA^*,$
%\lim_\alpha\langle (L_{a_\alpha}, R_{a_\alpha}\cdot a,\varphi\rangle&=
%\lim_\alpha\langle (L_{a_\alpha}(a),\varphi\rangle\\&=
we find  \begin{align*} \lim_\alpha\langle {a_\alpha}a,\varphi\rangle
&=\lim_\alpha\langle {a_\alpha},a\cdot \varphi\rangle
\lim_\alpha\langle {e a_\alpha},a\cdot \varphi\rangle
=\lim_\alpha\langle R_{a_\alpha}^{**}(e),a\cdot \varphi\rangle\\&=\langle R^{**}(e),a\cdot \varphi\rangle=
\langle R^{**}(e)a,\varphi\rangle=\langle R^{**}(a),\varphi\rangle=\langle R(a),\varphi\rangle.\end{align*}
Since $(a_\alpha a)$ is a net in $\mJ,$ this means that $R(a)$ is in the weak closure (i.e, ($\sigma(\mA, \mA^*)$-closure) of $\mJ$,
thus in $\mJ. $

In particular, $R(e_\alpha)\in \mJ$ for every $\alpha$ whenever $(L,R)\in \overline{\mJ}^w$.
%$\fR_e(\mu)=R^{**}(e)=\lim_\alpha R(e_\alpha), $
%where \[\langle R(a), \varphi\rangle=\langle\mu\ast a, \varphi\rangle=\langle\mu,  a %\varphi\rangle =\lim_\alpha\langle a_\alpha, a\varphi\rangle=\lim_\alpha\langle a_\alpha %a,\varphi\rangle,\] for each $a\in \mA,$

Accordingly,  \[\fR_e(L,R)=R^{**}(e)=\lim_\alpha R(e_\alpha)\in \mJ^{**}\quad\text{for every}\quad(L,R)\in \overline{\mJ}^w.\]
Since by (\ref{onto}), $R^{**}(e)=eR^{**}(e),$ this shows that the map $\fR_e$ maps $\overline{\mJ}^w$ into $e\mJ^{**}$.

Finally, we check that $\fR_e$ maps $\overline{\mJ}^w$  onto $e\mJ^\bd$.
So
let $m \in \mJ^{**} $ and $(a_\beta)_\beta$ be a bounded net in $\mJ$ such that $\lim_\beta a_\beta = m$ in the weak$^*$-topology.  After passing to a subnet if necessary, we may assume that the net  $(a_\beta)_\beta$ converges in the $w$- topology of $M(\mA)$ to some element $(L,R)$ in $\overline{\mJ}^w$. Then for every $\varphi \in \mA^*$, we find
\begin{align*}
  \langle em  , \varphi  \rangle
  &=\lim_\alpha \langle e_\alpha m, \varphi  \rangle
  =\lim_\alpha \langle m,  \varphi\cdot e_\alpha   \rangle\\
  &=\lim_\alpha \lim_\beta \langle a_\beta , \varphi\cdot e_\alpha   \rangle
  =\lim_\alpha \lim_\beta \langle R_{a_\beta}^{**}(e) ,  \varphi\cdot e_\alpha   \rangle\\
  &=\lim_\alpha  \langle R^{**}(e) ,   \varphi \cdot e_\alpha\rangle
  =  \langle eR^{**}(e)   , \varphi  \rangle\\
  &=  \langle R^{**}(e) ,   \varphi  \rangle,
\end{align*}
  where,  for the fifth equality, we use \eqref{weak}.
Hence $em =   R^{**}(e)$, as wanted.

%The proof for the left action is identical, so
 Similarly, with the left action of $M(\mA)$ on $\mA^{**}$ given by $(L,R)\cdot m=L^{**}(m)$ and  the map
\[\fL_e:\overline{\mJ}^w\to \mA^{**},\quad \fL_e(L,R)=L^{**}(e)=(L,R)\cdot e,\]
  $\mJ^{**}\diamond e$ is isometrically isomorphic to $\overline{\mJ}^w$.

Statement   (i) has now been proved.

As for the second summand $(1-e)\mJ^{**}$ in (\ref{direct}), note that
since $\mJ$ is an ideal of $\mA,$ the actions  of $\mA$ on $\mA^*$
(as defined at the beginning of Section 2) may be restricted to $\mJ^*$, and so the following identities hold for every $a\in \mA$ and $\varphi\in \mA^*:$
%\begin{equation}
\begin{align}\label{WAPJ}i^*(a\cdot\varphi)&=a\cdot i^*(\varphi)\quad\text{and}\quad i^*(\varphi\cdot a)=i^*(\varphi)\cdot a\quad\text{and so}\\&i^*(WAP(\mA))=\mA\cdot\mJ^*=\mJ^*\cdot\mA.\end{align}
%\end{equation}
Let now
\begin{align*}i^*(WAP(\mA))^\perp&=\{r\in \mJ^{**}:\langle r,i^*(WAP(\mA))\rangle=\{0\}\}
\\&=\{r\in \mJ^{**}:\langle r,WAP(\mA)\rangle=\{0\}\},\end{align*}
where in the second equality we regard $r$ as element in $\mA^{**}$, i.e., identifying $r$ with $i^{**}(r).$

%Note that  if $r\in i^*(WAP(\mA))^\perp$ and $\varphi\in \mJ^*,$ then $r\cdot\varphi=0$
%since
% \[\langle r\cdot \varphi, a\rangle=\langle r, \varphi\cdot a\rangle=0\quad\text{for every}\quad a\in \mA.\]
%Therefore, for every $m\in\mJ^{**}$, we have
%\[\langle mr, \varphi\rangle=\langle m, r\cdot \varphi\rangle=0,\] and so the elements in
So regarding the element in $i^*(WAP(\mA))^\perp$ as elements in $\mA^{**},$ we see that they are right annihilators of $\mA^{**}$.
In particular, $er=0$ and so $r\in (1-e)\mJ^{**}$. Thus, $i^*(WAP(\mA))^\perp\subseteq (1-e)\mJ^{**}.$

Conversely, it is also  clear that $(1-e)\mJ^{**}\subseteq i^*(WAP(\mA))^\perp$ since $em= m$ on $WAP(\mA)$ for every $m\in\mA^{**}$.
\medskip

Moreover, for $r\in i^*(WAP(\mA))^\perp$ and $m\in \mJ^{**}$, $rm\in i^*(WAP(\mA))^\perp$
since, as known, $WAP(\mA)$ is introverted (see for example \cite[Corollary 5.8]{DL05}.
Therefore $ i^*(WAP(\mA))^\perp$ is an ideal in $\mJ^{**}$.
 In the same manner, one can check the analog statement with the second Arens product.
This explains the form of the decompositions in \eqref{desc:j} and proves the remaining  statements in the theorem.
\end{proof}
 In specific cases, such as  the group algebra of a compact group or the Fourier algebra of an amenable discrete group, the  decomposition outlined in Theorem \ref{8} can be made more concrete, see Corollary \ref{decs:concr}.

\section{Some special sets in $\widehat{G}$  corresponding ideals in $\mA$.}
\label{special}
%defined in locally compact groups
The group algebra $L^1(G)$ of a compact group is the Banach algebra on which the \wasabi class is modelled. Its weak sequential completeness and the existence  of contractive bais are well-established facts. That $L^1(G)$ is an ideal in $L^1(G)^\bd$ when $G$ is a compact group (and only then) seems to have been first proved in \cite{wong71}.
%  If we denote the Haar measure of a compact group by $\mg$, the group algebra $L^1(G)$ is made of those functions $f\colon G\to \C$ that are absolutely integrable with respect to the Haar measure of $G$ with norm $\norm{f}_1=\int |f(x)| \dmg(x)$.

In order to introduce some of the concepts that will be used to identify Arens regularity properties in ideals of \wasabi algebras, we start looking at $L^1(G)$.   We first  recall how ideals of $L^1(G)$, $G$ a compact group,  are determined by subsets of its dual object $\widehat{G}$.

Consider  a compact group $G$. Its dual object
 $\widehat{G}$ is the set of  equivalence classes of irreducible unitary representations of $G$.  If  $\mu\in M(G)$ is a bounded measure,  we denote by $\widehat{\mu}$ its Fourier-Stieltjes transform.
 This is a function that sends every $\pi \in \widehat{G}$ to an operator $\widehat{\mu}(\pi)$ on the Hilbert space $\h_\pi$ on which
	$\pi$ operates 	($\C$, if $G$ is Abelian). Recall that the correspondence $\mu \mapsto \widehat{\mu}$ transforms convolutions into pointwise products. We refer to Section 28 of \cite{hewiross2} for  a complete description of the properties of the Fourier-Stieltjes transform.

 If  $ \mathcal{X} $ is a linear subspace of $M(G) $ and  $ E \subset \widehat G $, we denote by $ \mathcal{X}_E  $   the  subspace of $ \mathcal{X} $, given by
\[\mathcal{X}_E   = \lbrace  \mu \in \mathcal{X}:  \widehat{\mu}(\gamma) =0 ~ \text {for}~ \gamma \in \widehat{G}\setminus E \rbrace.\]
If $\mathcal{X}$ is a subalgebra of $M(G)$, then $\mathcal{X}_E$ is a closed ideal in $\mathcal{X}$.
It is well known that every closed ideal of $L^1(G)$ can be represented as $L^1_E(G)$ for some  $E \subset \widehat G$, see \cite[Theorem 38.7]{hewiross2}.

It should be possible, therefore, to codify any given  property  of a closed ideal of $L^1(G)$  in terms of some  property of the corresponding  subset of  $\widehat{G}$.  In the next paragraphs we will  introduce the  property  of a subset  of $\widehat{G}$ which does this job for  Arens  regularity. This property, and another one closely   related, will be then extended   to ideals of general \wasabi algebras. ideas  will be exploited in the upcoming section.
 In the final section of the paper, we will address other classes of subsets of $\widehat{G}$ that give rise to ideals possessing  interesting  properties from the  point of view of Arens-regularity. They will turn out to be classes  of subsets with special  combinatorial, analytic or arithmetic properties that have made them an object of study in Harmonic Analysis for decades.

 A subset  $ E $ of $ \widehat G $ is said to be a  {\it Riesz set}, when every measure $\mu\in M_E(G)$ is absolutely continuous, i.e., when $M_E(G)=L_E^1(G)$. The origin of the term is in the
  F. and M. Riesz theorem  (see, e.g. \cite[Theorem 8.2.1]{rudin62})  which proves  that   $\N$ is a Riesz set in $L^1(\T)$.

We say that a  subset  $ E $ of $ \widehat G $ is a {\it  small-1-1  set}  if $M_E(G)*M_E(G)\subseteq L^1_E(G) $.
The notion of  small-1-1 set  is apparently stronger than the better known notion  of small-2 set   which requires that $\mu\ast  \mu \in L^1_E(G)$ for every $\mu\in  M_E(G).$ When $G$ is Abelian,  every small-2 set is small-1-1, simply because $2\mu_1\ast \mu_2=(\mu_1+\mu_2)^2-\mu_1^2-\mu_2^2$. It
is still unknown to the authors whether small-1-1 sets (and so
small-2 sets) are Riesz. As already mentioned by \"Ulger in \cite[page 273]{U}, this is a long standing open problem that goes back to
Glicksberg \cite{Glick}.

All the previous definitions extend  seamlessly  to the Fourier algebra of a discrete group $\Gamma$.
For the definition and basic properties of the Fourier algebra $A(\Gamma)$, and the closely related Fourier-Stieltjes algebra $B(\Gamma)$, we refer to the recent monograph \cite{kanilaubook}. It is enough for the present discussion to say that $B(\Gamma)$ is a Banach algebra whose members are bounded complex-valued functions on $\Gamma$, matrix coefficients of unitary  representations of $\Gamma$ to be more precise, that $A(\Gamma)$ is an ideal of $B(\Gamma)$ and that   the dual Banach space of $A(\Gamma)$ is isometrically isomorphic to the von Neumann algebra $VN(\Gamma)$ generated by the convolution operators on $\ell^2(\Gamma)$, $f\mapsto h\ast f$, $h\in \ell^1(\Gamma)$. The Fourier algebra $A(\Gamma)$ is always weakly sequentially complete, as every predual of a von Neumann algebra, and it is an ideal in $A(\Gamma)^\bd$ \cite{lau81}. When $\Gamma$ is amenable, $A(\Gamma)$ has a contractive bai, see \cite[Theorem 2.7.2]{kanilaubook}.  $A(\Gamma)$ is therefore a \wasabi algebra for every discrete amenable group. When $\Gamma$ is a commutative group with character group $G$, the Fourier and Fourier-Stieltjes transforms establish linear isometries between $A(\Gamma)$ and  $L^1(G)$, and between $B(\Gamma)$ and $M(G)$.

If $\Gamma$ is a discrete group, $ \mathcal{X} $ is a linear subspace of $B(\Gamma) $  and  $ E \subset  \Gamma $, we denote by $ \mathcal{X}_E  $   the  subspace of $ \mathcal{X} $, given by
\[\mathcal{X}_E   = \lbrace  \varphi\in \mathcal{X}: \varphi(\gamma) =0 ~ \text {for}~ \gamma \in \Gamma \setminus E \rbrace.\]
If $\mathcal{X}$ is a subalgebra of $B(\Gamma)$, then $\mathcal{X}_E$ is a closed ideal in $\mathcal{X}$.
%he concept of Riesz set can be naturally extended to Fourier algebras of discrete groups, in that case
A subset $E\subseteq \Gamma$ is then said to be a \emph{Riesz set} if $A_E(\Gamma)=B_E(\Gamma)$, i.e., if every function of $B(\Gamma)$ supported on $E$ is actually in $A(\Gamma)$.

%\item {\it small-2 set } if $ \mu \ast \mu \in L^1_E(G)  $ for every $\mu \in M_E(G) $.
%
%
%\item   {\it t-set} if there exists a finite subset $ F_E $ of $ \widehat{G} $ containing $ 0  $ such that $ E \cap (E-\gamma) $ is finite, for each $ \gamma \notin F_E $.

%\item {\it small $L_E^1(G)^\bd$-2 set } if $ \{ p^2 : p \in L^1_E(G)^\bd\}\subseteq L^1_E(G). $

We now extend the concept of Riesz set and small-1-1 set to  general \wasabi algebras.

\begin{definition}\label{def1}
Let $ \mA $ be a Banach algebra with a  bai $ (e_\alpha)_\alpha $ and an associated
 mixed identity $e$ in $ \mA^{**} $.
We say that a closed ideal $\mJ$ of $\mA  $ is a
\begin{enumerate}
\item {\it Riesz ideal} if  $e\mJ^{**}=\mJ^{**}\diamondsuit e=\mJ$. If $(e_\alpha)_\alpha$ is contractive, this is the same as   $\overline \mJ^w  = \mJ$.
%\item[3.] {\it small-2 ideal} if  $ T^{2} \in  J$ for every $ T \in  \wJ$.
%\item[4.] {\it $\Lambda(1)$-ideal} if $  J$ is reflexive.
%\item[5.] {\it hypercoset ideal} if $\wJ$ is unital.
%\item[6.] {\it co-Rosenthal} if $J^\perp \subseteq WAP(\mA)$.
\item{\it small-1-1 ideal} if  $e\mJ^{**}\mJ^{**}\subseteq \mJ$,
i.e., for contractive $(e_\alpha)_\alpha$,  if $\overline \mJ^w \overline \mJ^w  \subseteq \mJ$.
\end{enumerate}
%{\tt I took away {\it hypercoset ideal}}
\end{definition}

\begin{remarks}\label{New} ~
\begin{enumerate}
\item
 { If $G$ is a compact group and $E\subseteq \widehat{G}$, then $L_E^1(G)$ is a Riesz ideal (respectively, a small-1-1 set) precisely when $E$ is a Riesz set (respectively, a small-1-1 set).}
 \item {By Theorem \ref{8}(iii), either condition $e\mJ^{**}=\mJ$ or $\mJ^{**}\diamondsuit e= \mJ$ is enough to define a Riesz ideal. This can also be verified directly as follows. Putting $m=em+r$ with $r\in WAP(\mA)^\perp$, we see that for $a\in\mJ,$
\[em=a\Longrightarrow m\diamondsuit e=(em+r)\diamondsuit e=(a+r)\diamondsuit e=a\diamondsuit e=a.\] Similarly, $m\diamondsuit e=a\Longrightarrow em=a.$}
\medskip
\item As in the previous remark, either condition   $\mJ^\bd\diamondsuit\mJ^\bd \diamondsuit e \subseteq \mJ$ or $e\mJ^\bd\mJ^\bd \subseteq \mJ$
is enough to define a small-1-1 ideal. For, using three times that  $pq-p\diamondsuit q\in WAP(\mA)^\perp$ for every $p,\,q\in \mA^\bd$, one can deduce that
        $(m\diamondsuit n)\diamondsuit e-emn$ is an annihilator, for every $m,\,n\in \mA^\bd$ and, hence, that $emn\in \mJ$ if and only if $(m\diamondsuit n)\diamondsuit e\in \mJ$.
\item
{ A closed ideal  $\mI$ of a  Riesz ideal $\mJ $ in $\mA$ is also  a Riesz ideal in $\mA$.}
 This is immediate from the fact that $e\mI^{**}\subseteq \mI^{**}\cap \mJ\subseteq \mI$ since $\mI$ is an ideal in $\mA$ and $\mJ$ is a Riesz ideal, the second inclusion is due to the convexity of $\mI$ as the weak limit of any net in $\mI$ stays in $\mI.$
%To see this, note first that $e\mI^{**}\subseteq \mJ$ since $\mJ$ is a Riesz ideal. So for every $u\in \mI^{**}$, we may see $eu$ as %an element  $v\in \mJ.$
%Let then  $(a_\alpha)$ be a net in $\mI$ with $u$ as its weak$^*$-limit. Then for every $a\in \mA$ and $\varphi\in \mA^*,$
%we have \[\lim_\alpha\langle (ea_\alpha)a,\varphi\rangle= \lim_\alpha\langle (ea_\alpha),a\cdot\varphi\rangle= \langle %eu,a\cdot\varphi\rangle=
%\langle v, a\cdot\varphi\rangle=\langle va, \varphi\rangle,\] where the second equality is due to $a\cdot\varphi\in WAP(\mA).$
%In other words,
%the net  $((ea_\alpha)a)_\alpha$ which is in $\mI$ has $va$ which is in $\mJ$ as its weak limit.
%Since $\mI$ is convex, the weak limit $va$ must be in $\mI$ for every $a\in \mA$.
%In particular, $(ve_\alpha)_\alpha$ is a net in $\mI$ when $(e_\alpha)$ is a bai in $\mA$, and so is $v$.
% Consequently, $eu=v\in \mI$, showing that $\mI$ is a Riesz ideal too.
\item
{ The same argument shows that  a closed ideal  of a  small-1-1 ideal is also  a small-1-1 ideal}.
%\item { It follows from its definition that for a Riesz ideal $\mJ$,
%\[\mJ^{**}\cdot \mJ^*=e\mJ^{**}\cdot \mJ^*\subseteq \overline{\mA\cdot \mJ^*} = \overline{i^*(WAP(\mA))} \quad\text{and}\]
%\[ \mJ^*\cdot \mJ^{**}= \mJ^*\cdot (\mJ^{**}\diamondsuit e)\subseteq\overline{\mJ^*\cdot \mA} =\overline{i^*(WAP(\mA))}.\]
 \end{enumerate}
%\quad\text{i.e.,}\\ \mJ^{**}\cdot \varphi&=e\mJ^{**}\cdot \varphi=\mJ\cdot \varphi\;\text{for every}\;\;\varphi\in \mJ^*, %\text{equivalently,}\\ \varphi\cdot\mJ^{**}&= \varphi \cdot (\mJ^{**}\diamondsuit e)
%=\varphi\cdot\mJ\;\text{for every}\;\;\varphi\in \mJ^*.\end{align*}}
\end{remarks}

To motivate our next definition, we need the following lemma.

\begin{lemma} In a Banach algebra with a   bai, a closed ideal $\mJ$ is a Riesz ideal if and only if for every $m\in \mJ^{**}$ there exists $a\in \mA$ (which is a fortiori in $\mJ$ by the fact that $\mJ^{**}\cap \mA=\mJ$)
such that \begin{align}\label{Riesz}m\cdot\varphi=(em)\cdot\varphi=a\cdot \varphi\quad\text{ for every}\; \varphi\in \mJ^*,\; \end{align}
\begin{align*} \text{equivalently,}\;\varphi\cdot m=\varphi\cdot (m\diamondsuit e)=\varphi\cdot a\text{ for every}\;\;\varphi\in \mJ^*.
\end{align*}
\end{lemma}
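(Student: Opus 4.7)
The plan is to exploit the decomposition $\mJ^{**}=e\mJ^{**}\oplus i^{*}(WAP(\mA))^{\perp}$ of Theorem~\ref{8} together with the multiplier identification $e\mJ^{**}\simeq\overline{\mJ}^{w}\subseteq M(\mA)$ from Theorem~\ref{8}(i).

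For the forward direction, suppose $\mJ$ is a Riesz ideal, so that $em\in\mJ$ and $m\diamondsuit e\in\mJ$ for every $m\in\mJ^{**}$. Their difference lies in $i^{*}(WAP(\mA))^{\perp}$ (as the gap between two decomposition components of $m$), and one checks $\mJ\cap i^{*}(WAP(\mA))^{\perp}=\{0\}$: if $b\in\mJ$ annihilates $\mJ^{*}\cdot\mA$, then $\mA b=0$, so by the bai $b=0$. Hence $em=m\diamondsuit e$, and I take $a:=em\in\mJ$. Writing $m=em+r$ with $r\in i^{*}(WAP(\mA))^{\perp}$, for $\varphi\in\mJ^{*}$ and $b\in\mJ$ one has $\varphi\cdot b\in\mJ^{*}\cdot\mJ\subseteq i^{*}(WAP(\mA))$ by \eqref{WAPJ}, so $\langle r,\varphi\cdot b\rangle=0$. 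This gives $(m-em)\cdot\varphi=0$ in $\mJ^{*}$, whence $m\cdot\varphi=(em)\cdot\varphi=a\cdot\varphi$. The equivalent chain with $\varphi\cdot m$ follows symmetrically, using $m-m\diamondsuit e\in i^{*}(WAP(\mA))^{\perp}$ and $b\cdot\varphi\in\mJ\cdot\mJ^{*}\subseteq i^{*}(WAP(\mA))$.

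For the converse, fix $m\in\mJ^{**}$ and an $a\in\mA$ satisfying both forms of the hypothesis. Let $(L_{m},R_{m})\in\overline{\mJ}^{w}$ be the multiplier corresponding to $em$ via $\fR_{e}$, so $R_{m}(c)=cm$ and $L_{m}(c)=mc$ for $c\in\mA$. A computation parallel to \eqref{onto} gives $\langle em,\varphi\cdot b\rangle=\varphi(R_{m}(b))$ for $\varphi\in\mA^{*}$ and $b\in\mA$, so the first hypothesis translates into $R_{m}(b)=ba$ for every $b\in\mJ$, and the second symmetrically yields $L_{m}(b)=ab$ for every $b\in\mJ$. Combining these through the multiplier axiom $cL_{m}(d)=R_{m}(c)d$ (taking $d\in\mJ$) and using that $R_{m}(\mA)\subseteq\mJ$ by Theorem~\ref{8} forces the identity $(L_{m},R_{m})=(L_{a},R_{a})$ on all of $\mA$. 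Consequently $\mA a=R_{m}(\mA)\subseteq\mJ$, and $e_{\alpha}a\to a$ in norm with $e_{\alpha}a\in\mJ$ gives $a\in\mJ$. Therefore $em=R_{m}^{**}(e)=R_{a}^{**}(e)=ea=a\in\mJ$, proving $e\mJ^{**}\subseteq\mJ$; the reverse inclusion is trivial, and $\mJ^{**}\diamondsuit e=\mJ$ follows analogously. The main obstacle is precisely the propagation step from $R_{m}|_{\mJ}=R_{a}|_{\mJ}$ to $R_{m}=R_{a}$ on all of $\mA$, which is exactly where both formulations of the hypothesis are used simultaneously via the bilateral multiplier structure.
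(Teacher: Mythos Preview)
Your forward direction is fine and matches the paper's idea, though you spell out more detail than needed.

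Your converse, however, is a substantial detour compared to the paper. The paper's argument is a single line: from the hypothesis $m\cdot\varphi=a\cdot\varphi$ it reads off
\[
\langle em,\varphi\rangle=\langle e,m\cdot\varphi\rangle=\langle e,a\cdot\varphi\rangle=\langle ea,\varphi\rangle=\langle a,\varphi\rangle
\]
for every $\varphi\in\mJ^{*}$, so $em=a\in\mJ$. No multipliers, no propagation, and only one of the two formulations is used.

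Your route through $M(\mA)$ brings in extra hypotheses (the identification $e\mJ^{**}\simeq\overline{\mJ}^{w}$ via $\fR_e$ comes from Theorem~\ref{8}, which assumes $\mA$ is an ideal in $\mA^{**}$, whereas the lemma only asks for a bai) and, more seriously, the step you flag as the ``main obstacle'' is a genuine gap. From the multiplier axiom with $d\in\mJ$ you get
\[
(R_{m}(c)-ca)\,d=0\quad\text{for all }d\in\mJ,
\]
but this does \emph{not} force $R_{m}(c)=ca$: you would need that $\mJ$ acts faithfully on the right of $\mA$, which is not given. The analogous issue arises for $L_m$. So the conclusion $(L_m,R_m)=(L_a,R_a)$ on all of $\mA$ is not established, and everything downstream of it (in particular $em=R_{m}^{**}(e)=R_{a}^{**}(e)=a$) collapses.

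You also invoke both formulations of the hypothesis simultaneously, but the statement asserts they are equivalent; the converse should be proved from one of them alone, as the paper does.
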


\begin{proof} If $\mJ$ is a Riesz ideal in $\mA$, then it is clear using the decomposition (\ref{desc:j}) that
for every $m\in \mJ^{**}$ there exists $a\in \mJ$
such that \[m\cdot\varphi=(em)\cdot\varphi=a\cdot \varphi\quad
\quad
\text{for every}\quad \varphi\in \mJ^*.\]
For the converse, suppose that  for every $m\in\mJ^{**}$ there exists $a\in \mJ$ such that $m\cdot\varphi=a\cdot \varphi$, and observe that
 \begin{align*}\langle em,\varphi\rangle=\langle e,m\cdot\varphi\rangle=\langle e,a\cdot\varphi\rangle&=
\langle a,\varphi\rangle\;
%\\ \langle m\diamondsuit e,\varphi\rangle=\langle e,\varphi\cdot m\rangle=\langle e,\varphi\cdot %a\rangle&=\langle a,\varphi\rangle,\;
\text{for}\; m\in \mJ^{**},\; a\in \mJ\;\text{ and}\;
 \varphi\in \mJ^*.\end{align*} Thus $em=a$, and so     the converse is also true.
\end{proof}

In view of the previous Lemma, a weakening of  the condition in (\ref{Riesz}) to $m\cdot\varphi\in \overline{\mA\cdot\mJ^{*}}=\overline{i^*(WAP(\mA))}$ (recall \eqref{WAPJ})  results in a weakening of the definition of Riesz ideals in $\mA$. This is what we do in the next definition.

\begin{definition}\label{def2}
Let $\mA$ be a Banach algebra with a  bai.
A closed ideal $\mJ$ of $\mA  $ is a {\it weak Riesz ideal} if for every $\epsilon>0,$ $m\in \mJ^{**}$ and $\varphi\in \mJ^*$, there exist $a, b\in \mA$ and $\psi, \theta\in \mJ^*$
such that \[\|m\cdot\varphi-a\cdot \psi\|_{\mJ^*}<\epsilon\quad\text{and}\quad\|\varphi\cdot m- \theta\cdot b\|_{\mJ^*}<\epsilon,\;i.e.,\]
\[\mJ^{**}\cdot\mJ^*\cup \mJ^*\cdot\mJ^{**}\subseteq\overline{i^*(WAP(\mA))}.\]
\end{definition}

Although weak Riesz ideals seem to be formally closer to being Riesz ideals than small-1-1 ideals, we  still do not know if Riesz and weak Riesz ideals are the same.

We observe next that weak Riesz ideals in a \wasabi algebra (and so, by Theorem \ref{prop1} below,  Arens regular ideals) are necessarily small-1-1.
We first need  a consequence of \cite[Lemma 3.1]{balapy}.

%\[\forall m\in\mJ^{**}\; \exists a\in\mA \mbox{ such that } m\cdot\varphi=(em)\cdot\varphi=a\cdot \varphi\quad\text{ for every}\; %\varphi\in \mJ^*\]
%\[\forall m\in\mJ^{**}\; \forall\varphi\in
%\mJ^*\; \exists a\in\mA \mbox{ such that } m\cdot\varphi=(em)\cdot\varphi=a\cdot \varphi \]
%\[\forall m\in\mJ^{**}\; \forall\varphi\in
%\mJ^*\;\exists a\in\mA\; \exists\psi\in\mJ^* \mbox{ such that } m\cdot\varphi=(em)\cdot\varphi=a\cdot \psi\]

\begin{lemma}  \label{lem5}
Let $ \mA $ be a Banach algebra in the \wasabi class. Let $ m \in \mA^{**}$  satisfy $\mA^* \cdot m  \subseteq WAP(\mA)$ or  $m \cdot \mA^* \subseteq WAP(\mA)$. Then $m =a_0 +r$ for some $a_0\in \mA$ and $r\in WAP(\mA)^\perp$.
\end{lemma}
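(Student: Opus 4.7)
My plan is to show, under the hypothesis $m \cdot \mA^* \subseteq WAP(\mA)$ (the other case being symmetric with $\diamondsuit$ replacing juxtaposition), that $em$ actually belongs to $\mA$. Once this is established, the decomposition $\mA^{**} = e\mA^{**} \oplus (\mA^* \cdot \mA)^\perp$ from \eqref{desc:a2}, together with the identification $(\mA^* \cdot \mA)^\perp = WAP(\mA)^\perp$ (available since $\mA$ is an ideal in its bidual with a bai), furnishes the splitting $m = em + (m - em)$, and I can take $a_0 := em \in \mA$ and $r := m - em \in WAP(\mA)^\perp$.

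To prove $em \in \mA$, I would proceed through weak compactness of the right-translation operator $R_m$. Because $\mA$ is an ideal in $\mA^{**}$, the pair $(L_m, R_m)$ with $L_m(a) = ma$ and $R_m(a) = am$ is a multiplier of $\mA$ whose components both send $\mA$ into $\mA$, and by \eqref{onto} we have $em = R_m^{**}(e)$. A direct computation with the Arens definitions shows that the adjoint satisfies $R_m^*(\varphi) = m \cdot \varphi$ for every $\varphi \in \mA^*$, so the hypothesis reads precisely as $R_m^*(\mA^*) \subseteq WAP(\mA)$. Once \cite[Lemma 3.1]{balapy} upgrades this range condition to genuine weak compactness of $R_m^*$, Gantmacher's theorem yields weak compactness of $R_m$, and then the standard identification $T^{**}(X^{**}) \subseteq Y$ for weakly compact $T: X \to Y$ gives $em = R_m^{**}(e) \in \mA$, as required.

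The main obstacle I foresee lies precisely in invoking \cite[Lemma 3.1]{balapy}: passing from the subspace-level condition $R_m^*(\mA^*) \subseteq WAP(\mA)$ to genuine weak compactness of $R_m^*$. Mere range inclusion in $WAP(\mA)$ does not automatically imply weak compactness, and it is here that the full \wasabi hypothesis --- particularly weak sequential completeness of $\mA$ together with the bai --- should enter in a decisive way. For the twin hypothesis $\mA^* \cdot m \subseteq WAP(\mA)$, I would run the symmetric argument using $L_m$ (whose adjoint is $\varphi \mapsto \varphi \cdot m$) and the decomposition $\mA^{**} = \mA^{**} \diamondsuit e \oplus (\mA \cdot \mA^*)^\perp$, obtaining $m \diamondsuit e \in \mA$ and the decomposition $m = (m \diamondsuit e) + (m - m \diamondsuit e)$ with $r := m - m \diamondsuit e \in WAP(\mA)^\perp$.
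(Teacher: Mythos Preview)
Your proposal is correct and follows essentially the same route as the paper: both invoke the Baker--Lau--Pym result (cited there as \cite[Lemma/Theorem 3.1]{balapy}) applied to $R_m^\ast$, using the module identity $R_m^\ast(\varphi\cdot a)=R_m^\ast(\varphi)\cdot a$ and the range condition $R_m^\ast(\mA^\ast)\subseteq WAP(\mA)=\mA\cdot\mA^\ast$, and then read off the decomposition $m=em+(m-em)$. The only cosmetic difference is that the paper extracts from \cite{balapy} an element $a_0\in\mA$ with $m\cdot\varphi=a_0\cdot\varphi$ and then computes $em=a_0$ directly, whereas you phrase the same conclusion as weak compactness of $R_m$ and obtain $em=R_m^{**}(e)\in\mA$ via Gantmacher; these are equivalent formulations of the same step.
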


\begin{proof} For  $ m \in \mA^{**} $, let $L_m\colon  \mA\to \mA$ and $R_m\colon \mA \to \mA$ be the left and right translate, respectively,
of elements in $\mA$ by $m$. Recall that, if $m \in \mA^{**}, \varphi \in \mA^*  $, and $a \in \mA$,  we have $L^*_m(\varphi)=  \varphi \cdot m $ and $R^*_m(\varphi)= m \cdot \varphi$. Our assumption
$m \cdot \mA^*  \subseteq WAP(\mA)$ means then that
$R_m^*$ maps $\mA^*$ into $WAP(\mA)= \mA\cdot \mA^*.$ Since it is clear that
$R^*_m( \varphi\cdot a)= R^*_m( \varphi) \cdot a,$
\cite[Theorem 3.1]{balapy} gives an element $a_0\in \mA$ such that \[m\cdot \varphi= R^*_m( \varphi)=
a_0\cdot \varphi\quad\text{for every}\quad \varphi\in \mA^*.\]
In other words, \[\langle m,\varphi\cdot a\rangle= \langle m\cdot\varphi,a\rangle=
\langle a_0\cdot\varphi,a\rangle= \langle  a_0, \varphi\cdot a\rangle\quad\text{for every}\quad
\varphi\cdot a\in WAP(\mA).\]
Now if $m$ is decomposed as $m=em+(1-e)m$ for some right identity $e$ in $\mA^{**},$
then \[\langle em,\varphi\rangle=\lim_\alpha\langle m,\varphi\cdot e_\alpha\rangle=\lim_\alpha
\langle a_0, \varphi\cdot e_\alpha\rangle=\langle ea_0, \varphi\rangle=\langle a_0,\varphi\rangle.\]
This shows the claim.

The assumption that $ \mA^*  \cdot m\subseteq WAP(\mA)$ yields the same claim with an analogous argument.
 %[$R^*_m(\varphi\cdot a)=R^*_m(\phi) \cdot a$],
\end{proof}

%Our next results show how close small-1-1 ideals are  to constitute a boundary between Arens regular and Arens irregular algebras.

\begin{proposition}\label{wsc}
Let $ \mA $ be a  Banach algebra in the  \wasabi  class.
Let $\mJ$ be a closed ideal in $\mA$.
If $\mJ$ is a  weakly Riesz ideal (equivalently, an Arens regular ideal by Theorem \ref{prop1}, \emph{infra}) , then it is a small-1-1 ideal.
\end{proposition}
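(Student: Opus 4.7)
The plan is to verify the small-1-1 condition $e\mJ^{**}\mJ^{**}\subseteq \mJ$ (which suffices by Remark \ref{New}(iii)) by applying Lemma \ref{lem5} to the element $mn\in \mJ^{**}\subseteq \mA^{**}$, for arbitrary $m,n\in \mJ^{**}$. Concretely, I will show that $(mn)\cdot\mA^*\subseteq WAP(\mA)$; Lemma \ref{lem5} will then yield a decomposition $mn=a_0+r$ with $a_0\in \mA$ and $r\in WAP(\mA)^\perp$. Multiplying on the left by the mixed identity $e$ gives $e(mn)=ea_0+er=a_0$, and since $\mJ^{**}$ is an ideal of $\mA^{**}$, the element $a_0=e(mn)$ lies in $\mJ^{**}\cap \mA=\mJ$, as required.

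The crucial step is to prove $(mn)\cdot\varphi\in WAP(\mA)$ for every $\varphi\in \mA^*$. A direct application of the Arens identities yields $(mn)\cdot\varphi=m\cdot\psi$ with $\psi:=n\cdot\varphi\in \mA^*$. The restriction of $\psi$ to $\mJ$ is $i^*(\psi)=n\cdot i^*(\varphi)\in \mJ^{**}\cdot\mJ^*$, and the weak Riesz hypothesis places this in $\overline{\mA\cdot\mJ^*}=\overline{i^*(WAP(\mA))}$. I will therefore choose a sequence $\xi_k\in WAP(\mA)$ with $i^*(\xi_k)\to i^*(\psi)$ in $\mJ^*$.

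To lift this approximation from $\mJ^*$ back to $\mA^*$, I rely on the isometric quotient identification $\mA^*/\mJ^\perp\simeq \mJ^*$: one can select $\theta_k\in \mJ^\perp$ such that $\xi_k+\theta_k\to \psi$ in the norm of $\mA^*$. A quick check shows that elements of $\mJ^{**}$ annihilate $\mJ^\perp$ under the right module action, because $\mJ^\perp$ is itself an $\mA$-submodule of $\mA^*$; in particular $m\cdot\theta_k=0$ for every $k$. Consequently $m\cdot\psi=\lim_k m\cdot\xi_k$ in norm. Introversion of $WAP(\mA)$ ensures that each $m\cdot\xi_k$ belongs to $WAP(\mA)$, and norm-closedness of $WAP(\mA)$ then delivers $(mn)\cdot\varphi=m\cdot\psi\in WAP(\mA)$.

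The main technical obstacle is the transition from the $\mJ^*$-level statement furnished by the weak Riesz hypothesis to the $\mA^*$-level statement demanded by Lemma \ref{lem5}. It is overcome by pairing two clean observations: the quotient isometry $\mA^*/\mJ^\perp\simeq\mJ^*$, which realises $\psi$ as a norm-limit of $WAP(\mA)$-elements modulo $\mJ^\perp$, and the annihilation identity $\mJ^{**}\cdot\mJ^\perp=\{0\}$, which kills the $\mJ^\perp$-correction the moment one multiplies by $m\in \mJ^{**}$ from the left.
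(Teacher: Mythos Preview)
Your proof is correct and follows essentially the same strategy as the paper's: both arguments show $(mn)\cdot\mA^*\subseteq WAP(\mA)$ and then invoke Lemma~\ref{lem5} to conclude $emn\in\mJ$. The only cosmetic difference is that the paper transfers the $\mJ^*$-approximation to $\mA^*$ via the direct duality computation $\langle m\cdot\eta,b\rangle=\langle i^*(\eta),bm\rangle$ (using $bm\in\mJ$) and lands in $WAP(\mA)$ through $ma_\varepsilon\in\mA$, whereas you package the same transfer via the quotient isometry $\mA^*/\mJ^\perp\simeq\mJ^*$ together with $m\cdot\mJ^\perp=\{0\}$ and then invoke introversion of $WAP(\mA)$; these are two presentations of the same mechanism.
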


        \begin{proof} Let $\epsilon>0, $
         $m,n\in \mJ^{**}$ and $\varphi\in \mA^*$. Let, as usual,
          $i^\ast \mA^\ast \to \mJ^\ast$ denote the   restriction mapping that is adjoint to the inclusion $i\colon \mJ\to \mA$. Since $\mJ$ is a weak Riesz ideal, we may pick $a_\varepsilon\in \mJ$ and $\psi_\varepsilon \in \mA^\ast$
           such that
\[ \norm{n\cdot i^\ast(\varphi)-a_\varepsilon i^\ast(\psi_\varepsilon)}_{\mJ^\ast}<\frac{\varepsilon}{\norm{m}}.\]
If now $b$ is an arbitrary element of $\mA$ with $\norm{b}\leq 1$,
% and $(m_\alpha)$ is a net in $\mJ$ with weak$^\ast$-limit $m$ (we can assume that  $\norm{m_\alpha}\leq \norm{m}$),
then, taking into account that $bm\in \mJ$ and that $ i^*(m\cdot\varphi)=m\cdot i^*(\varphi)$,
\begin{align*}
  |\<{mn\cdot \varphi-ma_\varepsilon\cdot  \psi_\varepsilon,b}|&=|\<{m\cdot(n\cdot \varphi-a_\varepsilon\cdot \psi_\varepsilon),b}|\\
  %&=\<{m,(en\phi-a_\varepsilon\psi_\varepsilon)\cdot b}\\
 % &=\lim_\alpha\<{m_\alpha, (en\phi-a_\varepsilon\psi_\varepsilon)\cdot b}\\
%  &=\lim_\alpha\<{(en\phi-a_\varepsilon\psi_\varepsilon),bm_\alpha}\\
%  &=\lim_\alpha\<{eni^\ast(\phi)-a_\varepsilon i^\ast(\psi_\varepsilon),bm_\alpha}\\
  &=|\<{n\cdot i^\ast(\varphi)-a_\varepsilon \cdot i^\ast(\psi_\varepsilon),bm}|\\
  &\leq \norm{n\cdot i^\ast(\varphi)-a_\varepsilon \cdot i^\ast(\psi_\varepsilon)}_{\mJ^*}\norm{m}_{\mJ^{**}}<\varepsilon.
\end{align*}
 It follows that $ \lim_{\varepsilon\to 0} \norm{mn\cdot \varphi-ma_\varepsilon\cdot \psi_\varepsilon}_{A^\ast}=0$. Since $ma_\epsilon\in\mJ,$ we find that $ma_\varepsilon\cdot \psi_\varepsilon\in WAP(\mA)$ for every $\varepsilon>0$, and we conclude that $mn\cdot \varphi\in WAP(\mA)$.
%
%           \begin{align*}
%             \left|\<{emn\cdot \varphi-ema_\varepsilon\cdot \psi_\varepsilon,b}\right|&=\left|\<{n\cdot \varphi- a_\varepsilon\psi_\varepsilon,bem}\right|
%             \\&=\left|\<{i^\ast(n\cdot \varphi)- i^\ast(     a_\varepsilon\psi_\varepsilon),bem}\right|\\&\leq  \norm{n\cdot i^\ast(\varphi)-a_\varepsilon\cdot i^\ast(\psi_\varepsilon)}\cdot \norm{bem}\leq \varepsilon\cdot \norm{em}.
%           \end{align*}
%           It follows that $ \lim_{\varepsilon\to 0} \norm{emn\cdot \varphi-ema_\varepsilon\cdot
%\psi_\varepsilon}_{A^\ast}=0$. Since $ema_\varepsilon\cdot \psi_\varepsilon\in WAP(\mA)$ for every $\varepsilon>0$ , we conclude that $emn\cdot \varphi\in WAP(\mA)$.
        We apply then Lemma \ref{lem5} to obtain that $mn =a_0 +r$ for some $a_0\in \mA$ and $r\in WAP(\mA)^\perp$.    This immediately implies that  $emn =a_0\in \mJ$, as required.
\end{proof}

%
%\begin{lemma}
%Let $ \mA $ be a  Banach algebra in the  \wasabi  class\footnote{\tt The old statement required a bai, wouldn't a brai be enough? should  we define  the \wasabi class requiring justa brai?} .
%Let $\mJ$ be a closed ideal in $\mA$.
%The ideal $\mJ$ is   weakly Riesz if and only if $e\mJ^\bd\mJ^\bd\cdot \mA^\ast\subseteq WAP(\mA)$.
%\end{lemma}

\section{Arens regularity}
We deal in this section with Arens regularity of closed ideals $\mJ$. The first theorem determines precisely when $\mJ$ is Arens regular. It shows in particular that weak Riesz ideals characterize Arens regular ideals in any Banach algebra, which is an ideal in its second dual $ \mA^{**}$ and has a bai. The equivalence of the first two statements of the theorem was
%theorem and the consequences were
proved in \cite{EFG} for closed ideals in the group algebra of a compact Abelian group.

\begin{lemma}\label{reg} Let $\mA$ be a two-sided ideal in $\mA^{**}$. Then for every $m,n,p\in\mA^{**}$ and $\varphi\in \mA^*$, we have
\begin{enumerate}
\item  $(p\cdot \varphi)\cdot m=p\cdot(  \varphi\cdot m).$
\item $(m\diamondsuit n)p=m\diamondsuit ( np).$
\end{enumerate}
%{\tt If $\mA$ is a right ideal ...}
\end{lemma}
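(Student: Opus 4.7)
My plan is to notice first that (ii) is essentially equivalent to (i), and that (i) is an associativity-type identity in $\mA^*$ which can be verified by pairing both sides with an arbitrary $a\in \mA$ and using Goldstine's theorem together with the ideal hypothesis.

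For (i), I would test the equality $(p\cdot\varphi)\cdot m=p\cdot(\varphi\cdot m)$ against $a\in\mA$. On the left, the defining formula for $\psi\cdot m$ gives
\[\langle (p\cdot\varphi)\cdot m,a\rangle=\langle m,a\cdot(p\cdot\varphi)\rangle.\]
Choosing a bounded net $(a_\gamma)\subset \mA$ with $a_\gamma\to m$ in the weak$^*$ topology, a short unfolding of the actions yields $\langle m,a\cdot(p\cdot\varphi)\rangle=\lim_\gamma \langle p\cdot\varphi, a_\gamma a\rangle$. This is the point where the ideal hypothesis enters: weak$^*$-continuity of right multiplication by $a$ forces $a_\gamma a\to ma$ weak$^*$, and because $\mA$ is an ideal in $\mA^{**}$ the limit $ma$ lies in $\mA$; hence the convergence is actually weak in $\mA$, so $\lim_\gamma \langle p\cdot\varphi,a_\gamma a\rangle=\langle p\cdot\varphi,ma\rangle=\langle p,\varphi\cdot(ma)\rangle$.

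On the right, one computation gives $\langle p\cdot(\varphi\cdot m),a\rangle=\langle p,(\varphi\cdot m)\cdot a\rangle$, and the routine purely algebraic identity $(\varphi\cdot m)\cdot a=\varphi\cdot(ma)$ (which holds for every Banach algebra and is proved by testing against $b\in\mA$, using that $a\cdot(b\cdot\varphi)=(ab)\cdot\varphi$) reduces this to $\langle p,\varphi\cdot(ma)\rangle$. Both sides therefore equal $\langle p,\varphi\cdot(ma)\rangle$ for every $a\in\mA$, proving (i).

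For (ii), I would just unwind the Arens products. By the defining formulae for $\cdot$ and $\diamondsuit$,
\[\langle (m\diamondsuit n)p,\varphi\rangle=\langle m\diamondsuit n,p\cdot\varphi\rangle=\langle n,(p\cdot\varphi)\cdot m\rangle\]
and
\[\langle m\diamondsuit(np),\varphi\rangle=\langle np,\varphi\cdot m\rangle=\langle n,p\cdot(\varphi\cdot m)\rangle,\]
so (ii) is an immediate consequence of (i).

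The only real obstacle is bookkeeping: the five kinds of dot-actions and the two Arens products have to be applied in the correct order and one must be careful about when the ideal property is invoked. The actual analytic content is slim—the ideal hypothesis is used exactly once, to guarantee $ma\in\mA$ so that a weak$^*$-limit of a net in $\mA$ collapses to an ordinary $\mA$-$\mA^*$ pairing; everything else is formal.
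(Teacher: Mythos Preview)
Your argument is correct and follows the same strategy as the paper's: pair (i) with an arbitrary $a\in\mA$, invoke the ideal hypothesis once, and then deduce (ii) from (i) by unfolding the two Arens products exactly as you do. The paper's version is slightly more direct---it avoids the Goldstine net (the identity $\langle m,\,a\cdot(p\cdot\varphi)\rangle=\langle ma,\,p\cdot\varphi\rangle$ is immediate from the definition of the first Arens product, so no approximation of $m$ is needed) and it happens to use the left-ideal property $ap\in\mA$ at the key step where you use the right one $ma\in\mA$---but these are cosmetic differences.
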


\begin{proof} For the first statement,
let $a\in\mA.$ Then, using the facts that  $ma\in \mA$ and $ap\in \mA$,  we find
\begin{align*}
\langle (p\cdot \varphi)\cdot m, a\rangle&=\langle p \cdot \varphi, m\diamondsuit a\rangle
=\langle p \cdot \varphi, m a\rangle\\&=\langle (ma) p,  \varphi\rangle=
\langle m(a p),  \varphi\rangle=\langle m\diamondsuit (a p),  \varphi\rangle\\&=
\langle a p,  \varphi\cdot m\rangle=
\langle a, p\cdot(  \varphi\cdot m)\rangle,\end{align*}
and so $(p\cdot \varphi)\cdot m=p\cdot(  \varphi\cdot m),$ as required.

The second statement follows from the first as
 \begin{align*}\langle(m\diamondsuit n)p, \varphi\rangle&=
\langle m\diamondsuit n, p\cdot \varphi\rangle= \langle n, (p\cdot \varphi)\cdot m\rangle\\&=\langle n, p\cdot(  \varphi\cdot m)\rangle= \langle n p,   \varphi\cdot m\rangle=
\langle m\diamondsuit(n p),   \varphi\rangle.\end{align*}
\end{proof}

%\textcolor{red}{\tt I made economies in the following proof by exchanging the roles of (ii) and (iii)}
%
%{\tt Another way of arguing for next theorem to prove (i) implies (iii) directly}

%Suppose that $\mJ$ is not a weak Riesz ideal. Pick $m\in\mJ^{**}$ and $\varphi\in\mJ^*$ such that $m\cdot \varphi\notin\overline{i^*(WAP(A))}=\overline{\mA\cdot\mJ^*}.$ Take by Hahn-Banach, $n\in \mJ^{**}$ such that $\langle n,\mA\cdot\mJ^*\rangle=\{0\}$ and
%$\langle n, m\cdot\varphi\rangle=1.$
%Then $\langle n a, \varphi\rangle=0$ for every $a\in\mA$ and $\langle n m, \varphi\rangle=1$ implies that $\mJ$ is not Arens regular.
%

\begin{theorem} \label{prop1}
Let $ \mA $ be a Banach algebra, which is an ideal in its second dual $ \mA^{**}  $
and has a    bai. Let $ \mJ $ be a closed ideal  of $ \mA$.  Then the following statements are equivalent.
\begin{enumerate}
\item $\mJ  $ is Arens regular.
%$\overline{e\mJ^{**} \cdot \mJ^*}\;\cup\;\overline{\mJ^* \cdot \mJ^{**}\diamondsuit %e}\subseteq\overline{i^*(WAP (\mA))}.$ {\tt equality would have been nice!}
%are both contained in  $\overline{i^*(WAP (\mA))}$
% (the norm closure in $\mA^*$).
%\item $\mJ  $ is an Arens ideal in $\mA$.
\item $i^*(WAP(\mA))^\perp\subseteq \mathcal Z_1(\mJ^{**})\cap  \mathcal Z_2(\mJ^{**}).$
\item  $\mJ$ is a weak Riesz ideal in $\mA.$
\item $i^*(WAP(\mA))^\perp \mJ^\bd=\{0\}$ and $\mJ^\bd\diamondsuit i^*(WAP(\mA))^\perp =\{0\}.$
    \end{enumerate}
\end{theorem}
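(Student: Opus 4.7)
The plan is to establish the cycle $(1)\Rightarrow(2)\Leftrightarrow(4)\Leftrightarrow(3)\Rightarrow(1)$. The direction $(1)\Rightarrow(2)$ is immediate, since Arens regularity of $\mJ$ forces $\mathcal{Z}_1(\mJ^{**})=\mathcal{Z}_2(\mJ^{**})=\mJ^{**}$ and hence the inclusion in (2) holds.

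For $(2)\Leftrightarrow(4)$ I would use Theorem~\ref{8}(ii), which already delivers $\mJ^{**}\cdot i^*(WAP(\mA))^\perp=0=i^*(WAP(\mA))^\perp\diamondsuit\mJ^{**}$. For $r\in i^*(WAP(\mA))^\perp$, membership in $\mathcal{Z}_1(\mJ^{**})$ then collapses to $r\cdot n=0$ for every $n\in\mJ^{**}$, and membership in $\mathcal{Z}_2(\mJ^{**})$ to $n\diamondsuit r=0$ for every $n$; these are exactly the two halves of (4). For $(3)\Leftrightarrow(4)$ I would translate (4) via the Arens duality: $r\cdot n=0$ is $\langle r,n\cdot\varphi\rangle=0$ for all $\varphi\in\mJ^*$, and $n\diamondsuit r=0$ is $\langle r,\varphi\cdot n\rangle=0$. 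Letting $r$ range over $i^*(WAP(\mA))^\perp$ and using the bipolar identification $(i^*(WAP(\mA))^\perp)^\perp\cap\mJ^*=\overline{i^*(WAP(\mA))}$, (4) becomes $n\cdot\varphi,\,\varphi\cdot n\in\overline{i^*(WAP(\mA))}$, which is (3).

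The heart of the argument is $(3)\Rightarrow(1)$. Assuming (3), Proposition~\ref{wsc} gives that $\mJ$ is small-1-1, so by Remark~\ref{New}(iii) both $e\mJ^{**}\mJ^{**}\subseteq\mJ$ and $\mJ^{**}\diamondsuit\mJ^{**}\diamondsuit e\subseteq\mJ$ are available. I would then fix $m,n\in\mJ^{**}$, decompose $m=em+r_m$ and $n=en+r_n$ via Theorem~\ref{8}, and combine (4) with Theorem~\ref{8}(ii) to kill every cross term, leaving $mn=(em)(en)=(em)\cdot n$ and $m\diamondsuit n=(em)\diamondsuit(en)$. The first small-1-1 identity immediately places $(em)\cdot n$ in $\mJ$. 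The step I expect to be the main obstacle is showing $(em)\diamondsuit(en)\in\mJ$: the plan is to write $en=(en)\diamondsuit e+s$ with $s\in i^*(WAP(\mA))^\perp$, use (4) to discard $(em)\diamondsuit s$, and then exploit associativity of $\diamondsuit$ to rewrite $(em)\diamondsuit(en)=((em)\diamondsuit(en))\diamondsuit e\in\mJ^{**}\diamondsuit\mJ^{**}\diamondsuit e\subseteq\mJ$.

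To close, I would invoke the elementary fact that $\mA\cap WAP(\mA)^\perp=\{0\}$: if $a$ lies in this intersection then $\varphi(ba)=0$ for all $\varphi\in\mA^*$ and $b\in\mA$, so $ba=0$ for every $b$, and the bai of $\mA$ then gives $a=\lim_\alpha e_\alpha a=0$. Since the two Arens products on $\mA^{**}$ always agree on $WAP(\mA)$, the difference $mn-m\diamondsuit n$ sits in $WAP(\mA)^\perp$; combined with $mn,\,m\diamondsuit n\in\mJ\subseteq\mA$ from the previous paragraph, this forces $mn=m\diamondsuit n$, which is (1).
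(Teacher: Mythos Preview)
Your overall architecture for $(1)\Rightarrow(2)$, $(2)\Leftrightarrow(4)$ and $(3)\Leftrightarrow(4)$ is fine and matches the paper's reasoning closely.

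The problem is in your $(3)\Rightarrow(1)$ step. You invoke Proposition~\ref{wsc} to pass from ``weak Riesz'' to ``small-1-1'', but Proposition~\ref{wsc} is stated for algebras in the \wasabi\ class, i.e.\ it requires $\mA$ to be weakly sequentially complete (its proof goes through Lemma~\ref{lem5}, which needs exactly that hypothesis). Theorem~\ref{prop1}, however, only assumes that $\mA$ is an ideal in $\mA^{**}$ with a bai; weak sequential completeness is \emph{not} among its hypotheses. So your detour through small-1-1 proves a strictly weaker theorem. The paper is explicit about this distinction: just after Theorem~\ref{thm2} it notes that weak sequential completeness is needed only for the implication ``Arens regular $\Rightarrow$ small-1-1'', and the chain of equivalences in Theorem~\ref{prop1} is deliberately kept free of it.

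The paper's proof of $(3)\Rightarrow(1)$ avoids small-1-1 entirely. It is a one-line computation using Lemma~\ref{reg}(ii), namely $(m\diamondsuit n)p = m\diamondsuit(np)$, which holds whenever $\mA$ is a two-sided ideal in $\mA^{**}$. For $m_1,m_2\in\mJ^{**}$ and $\varphi\in\mJ^*$, the weak Riesz hypothesis places $m_2\cdot\varphi$ and $\varphi\cdot m_1$ in $\overline{i^*(WAP(\mA))}$, so that $m_1$ and $m_1\diamondsuit e$ (respectively $m_2$ and $em_2$) agree on them; Lemma~\ref{reg} then bridges the products:
\[
\langle m_1 m_2,\varphi\rangle=\langle m_1\diamondsuit e,\,m_2\cdot\varphi\rangle=\langle(m_1\diamondsuit e)m_2,\varphi\rangle=\langle m_1\diamondsuit(em_2),\varphi\rangle=\langle m_2,\varphi\cdot m_1\rangle=\langle m_1\diamondsuit m_2,\varphi\rangle.
\]
This is shorter than your route and, crucially, does not need weak sequential completeness.
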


\begin{proof}
There is no need to prove that (i) implies (ii).

%Suppose  that $ \mJ $ is Arens regular and  let  $ r \in i^*(WAP(\mA))^\perp$, $m\in \mJ^{**}$ and $\varphi\in \mJ^*$.   Then,
%\begin{align*}
%&\langle r ,  m \cdot \varphi \rangle
%= \langle r m  , \varphi  \rangle
%= \langle r \diamondsuit m  , \varphi  \rangle
%=0,\\
%&
%\langle r , \varphi \cdot m  \rangle
%= \langle m \diamondsuit r  , \varphi \rangle
%= \langle m   r , \varphi  \rangle
%=0.
%\end{align*}
%Thus, $ \mJ^{**}\cdot\mJ^*$ and $\mJ^*\cdot\mJ^{**}$ are both contained in ${i^*(WAP(\mA))}^{\perp\perp}=\overline{i^*(WAP(\mA))}$.

%Now $\overline{e\mJ^{**} \cdot \mJ^*}$ must be equal to $\mJ^*$. For otherwise,
%there exists a nonzero element $m\in \mJ^{**}$ which annihilates $e\mJ^{**} \cdot \mJ^*$. Then,

We now prove that (ii) implies (iii).
So, let's assume that $i^*(WAP(\mA))^\perp\subseteq \mathcal Z_1(\mJ^{**})$. Then,
for arbitrary $r\in i^*(WAP(\mA))^\perp$, $m\in \mJ^{**}$ and $\varphi\in \mJ^*$,
we have \[\langle r, m\cdot\varphi\rangle=\langle rm,\varphi\rangle=\langle r\diamondsuit m,\varphi\rangle=0,\]
and so \[m\cdot\varphi\in i^*(WAP(\mA))^{\perp\perp}=\overline{i^*(WAP(\mA))}.\]
Similarly, $\varphi\cdot m\in \overline{i^*(WAP(\mA))}$ when
$ i^*(WAP(\mA))^\perp\subseteq  \mathcal Z_2(\mJ^{**})$.

To see that  (iii) implies (i), we
let $m_1$ and $m_2$ be arbitrarily chosen in $\mJ^{**}$.

Then, for every $\varphi\in \mJ^*,$ we have
 \begin{align*}\langle m_1 m_2,\varphi\rangle&=\langle m_1, m_2\cdot\varphi\rangle
=\langle m_1\diamondsuit e, m_2\cdot\varphi\rangle=\langle (m_1\diamondsuit e) m_2,\varphi\rangle\\&=\langle(m_1\diamondsuit( e m_2), \varphi\rangle=\langle( e m_2), \varphi\cdot m_1\rangle\\&=
\langle m_2, \varphi\cdot m_1\rangle=\langle m_1\diamondsuit m_2, \varphi\rangle,\end{align*}
where the second and sixth equalities are due to the assumption that \[{\mJ^{**} \cdot \mJ^*}\;\cup\;{\mJ^* \cdot \mJ^{**}}\subseteq\overline{i^*(WAP (\mA))},\] and the fourth equality is due to Lemma
\ref{reg}.
Therefore, $m_1m_2=m_1\diamondsuit m_2$ and Arens regularity of $\mJ$ follows.

%To see that (ii) implies (iii), assume that $\mJ^{**} \cdot \mJ^*\;\cup\;\mJ^* \cdot \mJ^{**}\subseteq \overline{i^*(WAP(\mA))}$. Let  $ r \in {i^*(WAP(\mA))}^\perp $ and  $m\in \mJ^{**}$ be arbitrary.
%Then for every  $\varphi\in \mJ^*$, we have
%\[\langle rm, \varphi\rangle =\langle r,m\cdot\varphi\rangle=0\]
%since $m\cdot\varphi\in \overline{i^*(WAP(\mA))} $ by assumption.
%Accordingly,
% \[rm=0=r\diamondsuit m.\]
%%since the elements in  $WAP(\mA)^\perp$ are right annihilators of $\mJ^{**}$.
% Therefore,  $i^*(WAP(\mA))^\perp\subseteq \mathcal Z_1(\mJ^{**}).$
%
%
%Similarly, $\varphi\cdot m\in\overline{ i^*(WAP(\mA))}$ for every $\varphi\in \mJ^*$ yields $m\diamondsuit r=0=mr,$ showing that $r\in \mathcal Z_2(\mJ^{**}).$
% Therefore,  \[i^*(WAP(\mA))^\perp\subseteq \mathcal Z_1(\mJ^{**})\cap \mathcal Z_2(\mJ^{**}),\]
% as required.

We have now checked that Statements (i) through (iii) are equivalent.

Since elements of $i^\ast(WAP(\mA))^\perp$ are right annihilators for the first Arens product and left annihilators for the second, we see that statements  (ii) and (iv) are clearly equivalent and the theorem is proved.
\end{proof}

A  Riesz ideal in $\mA$ is clearly Arens regular. This can be either deduced directly
from the decomposition (\ref{direct}), or from the theorem above, for  Riesz ideals are weakly Riesz.
  When $\mA$ is the group algebra of a compact Abelian group,  this result was  obtained originally by \"Ulger in \cite{U} 	and proved again by the authors in \cite{EFG}.
	
 In addition to the previous theorem and Proposition \ref{wsc},
our next results show how close small-1-1 ideals are  to constitute a boundary between Arens regular and Arens irregular algebras.

%
%Nevertheless,  our next theorem gets very close to a positive answer instead for a large class of Banach algebras wich includes the  group algebra $L^1(G)$
%for compact groups and the Fourier algebra $A(G)$ for   discrete amenable groups.

%\begin{align*}
    %S^{(l)}_e(J) &=\{r   \fR_e(T): r \in i^*(WAP(\mA))^\perp, T \in \wJ \};\\
   % S^{(r)}_e(J) &=\{  \fL_e(T)\diamondsuit r: r \in i^*(WAP(\mA))^\perp, T \in \wJ \}.
%\end{align*}

\begin{theorem}  \label{thm2}
Let $ \mA $ be a  Banach algebra with a   bai, and suppose that it is an ideal in its second dual $ \mA^{**}  $.
Let $ \mJ $ be a  small-1-1 ideal of $\mA$.  Then
%, for any mixed identity $e \in \mA^{**}$:
\[ \mJ^{**}\mJ^{**} \subseteq \mathcal{Z}_1(\mJ^{**})\quad\text{and}\quad
 \mJ^{**}\diamondsuit\mJ^{**} \subseteq \mathcal{Z}_2(\mJ^{**}).\]
 In particular, $\mJ$ is  sAir  if and only if it is reflexive.\end{theorem}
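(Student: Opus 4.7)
My plan is to reduce everything to establishing $\mJ^{**}\mJ^{**} \subseteq \mathcal{Z}_1(\mJ^{**})$; the statement $\mJ^{**}\diamondsuit\mJ^{**} \subseteq \mathcal{Z}_2(\mJ^{**})$ then follows by an entirely symmetric argument, and the equivalence between sAir and reflexivity will fall out of the two containments together with Theorem \ref{8}. The technical backbone will be the decomposition $\mJ^{**} = e\mJ^{**} \oplus i^*(WAP(\mA))^\perp$ and the one-sided annihilator properties recorded in Theorem \ref{8}(ii).

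To prove $\mJ^{**}\mJ^{**} \subseteq \mathcal{Z}_1(\mJ^{**})$, I fix $m,n \in \mJ^{**}$ and $p \in \mJ^{**}$ and aim to show both $(mn)p$ and $(mn)\diamondsuit p$ reduce to the common value $m\cdot enp$. This quantity makes sense because small-1-1 gives $enp = (en)p \in e\mJ^{**}\mJ^{**} \subseteq \mJ$, hence $m\cdot enp \in \mJ^{**}\cdot \mJ \subseteq \mJ$. For the first product, associativity combined with the fact that $np - enp \in i^*(WAP(\mA))^\perp$ is a right annihilator for first multiplication gives $(mn)p = m(np) = m(enp)$. For the second product, I split $mn = emn + r$ with $emn \in \mJ$ (again by small-1-1) and $r \in i^*(WAP(\mA))^\perp$; since $r\diamondsuit p = 0$ and $emn \in \mJ \subseteq \mathcal Z_1(\mJ^{**}) \cap \mathcal Z_2(\mJ^{**})$, the product $(mn)\diamondsuit p$ simplifies to $(emn)p = (em)(np) = (em)(enp)$. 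This last expression equals $e(m\cdot enp) = m\cdot enp$ by associativity and the fact that $m\cdot enp \in \mA$, on which $e$ acts as a left identity.

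For the \emph{in particular} part, the $\Leftarrow$ direction is immediate. For $\Rightarrow$, sAir combined with the containments just established forces $\mJ^{**}\mJ^{**} \subseteq \mJ$ and $\mJ^{**}\diamondsuit\mJ^{**} \subseteq \mJ$. Therefore, for every $p,m \in \mJ^{**}$ the element $pm - p\diamondsuit m$ lies in $\mJ$; but it also lies in $i^*(WAP(\mA))^\perp$, since the two Arens products coincide on $WAP(\mA)$. A short check, exploiting $WAP(\mA) = \mA\cdot\mA^*$ and the bai of $\mA$ (functionals $\psi\cdot e_\alpha$ recover $\psi$ on $\mJ$ in the limit), shows $\mJ \cap i^*(WAP(\mA))^\perp = \{0\}$. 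Hence $pm = p\diamondsuit m$ for all $p,m \in \mJ^{**}$, placing $\mJ^{**}\subseteq \mathcal Z_2(\mJ^{**}) = \mJ$ and yielding reflexivity.

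The most delicate step will be the chain of reductions collapsing $(mn)p$ and $(mn)\diamondsuit p$ to the common value $m\cdot enp$: each removal of an $i^*(WAP(\mA))^\perp$-summand must invoke the correct one-sided annihilator property from Theorem \ref{8}(ii), and the identity $e(m\cdot enp) = m\cdot enp$ depends on first confirming that $m\cdot enp \in \mA$ before applying $ea = a$. Once this bookkeeping is carried out carefully, everything fits together symmetrically for the second product, and the reflexivity dichotomy follows transparently.
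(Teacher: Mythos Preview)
Your proof is correct and rests on the same ingredients as the paper's: the decomposition $\mJ^{**}=e\mJ^{**}\oplus i^*(WAP(\mA))^\perp$ from Theorem~\ref{8}, the one-sided annihilator identities of Theorem~\ref{8}(ii), and the small-1-1 hypothesis that places $e\mJ^{**}\mJ^{**}$ inside $\mJ$. The organization differs slightly: the paper first isolates the auxiliary inclusion $i^*(WAP(\mA))^\perp\,\mJ^{**}\subseteq \mathcal Z_1(\mJ^{**})$ and then uses $\mJ^{**}\mJ^{**}\subseteq \mJ+ i^*(WAP(\mA))^\perp\mJ^{**}$, whereas you compute $(mn)p$ and $(mn)\diamondsuit p$ directly and collapse both to $m\cdot enp$. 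For the ``in particular'' clause, the paper observes that sAir forces $i^*(WAP(\mA))^\perp\mJ^{**}=\{0\}=\mJ^{**}\diamondsuit i^*(WAP(\mA))^\perp$ and then invokes Theorem~\ref{prop1}(iv)$\Rightarrow$(i); your argument via $pm-p\diamondsuit m\in \mJ\cap i^*(WAP(\mA))^\perp=\{0\}$ is a self-contained variant that reaches Arens regularity without citing Theorem~\ref{prop1}. These are cosmetic differences; the mathematics is the same.
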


\begin{proof}
 We show first that $i^*(WAP(\mA))^\perp\mJ^{**}\subseteq \mathcal{Z}_1(\mJ^{**}) .$
Fix a mixed identity $e$ of $\mA^{**}$. Let
 $p=rm$  with $m\in\mJ^{**} $ and $r \in i^*(WAP(\mA))^\perp$.
Then for every $n\in \mJ^{**}$ and $s \in i^*(WAP(\mA))^\perp$, we find
\[(em)(en)= e(mn) \in \mJ\subseteq \mathcal {Z}_1(\mJ^{**}),\] and so
\[     p (en+s) = (rm) n = r(emn) =0.\]
On the other hand,  since $\varphi \cdot (rm )=0$  for all  $\varphi \in \mA^*$, we have
\[
    p \diamondsuit (en+s) = p \diamondsuit (en+s) =0.
\]
 Hence $p \in \mathcal {Z}_1(\mJ^{**})$ and the containment  $i^*(WAP(\mA))^\perp\mJ^{**}\subseteq \mathcal{Z}_1(\mJ^{**}) $ is proved.

 Since, by the decomposition of Theorem \ref{8},  \begin{equation}\mJ^{**}\mJ^{**} \subseteq \mJ+i^*(WAP(\mA))^\perp\mJ^{**}, \label{sair}\end{equation} the first inclusion in the theorem follows.

 The inclusion $\mJ^{**}\diamondsuit\mJ^{**} \subseteq \mathcal{Z}_2(\mJ^{**})$
is proved in the same manner.

Finally, since $i^*(WAP(\mA))^\perp\mJ^{**}\subseteq \mJ^\bd\mJ^\bd\subseteq\mathcal Z_1(\mJ^\bd)$
and   $WAP(\mA)^\perp \mA^\bd \cap \mA=\{0\}$ , we see that $\mJ$ can be  lsAir  only when
\begin{equation*}  i^*(WAP(\mA))^\perp\mJ^{**}=\{0\}.\end{equation*}
In the same way, $\mJ$ can only be rsAir   when
\begin{equation*}\label{wapj**} \mJ^{**}\diamondsuit i^*(WAP(\mA))^\perp=
\{0\}.\end{equation*}

Proposition \ref{prop1} then shows that $\mJ$ must be Arens regular and, hence, reflexive.
%
%But \eqref{wapj**} implies that $m m^\prime=emm^\prime\in \mJ$ and $m\diamondsuit m^\prime=m\diamondsuit m^\prime\diamondsuit e\in \mJ$ for every $m,m^\prime\in \mJ^\bd$. We would then have that
% $mm^\prime-m\diamondsuit m^\prime \in i^*(WAP(\mA))^\perp\cap \mJ$, which is only possible if  $mm^\prime-m\diamondsuit m^\prime =0$, for each, $m,\,m^\prime \in \mJ^\bd$. So,    $\mJ$ must be Arens regular.

\end{proof}

As a summary of these regularity-like properties, we see that
 Theorem \ref{prop1} tells us   that Arens regular ideals of \wasabi algebras are exactly the same as weak Riesz ideals and  Proposition \ref{wsc} that they are always  small-1-1. After Theorem \ref{prop1},   we have therefore that for a nonreflexive ideal
\[\text{Riesz}\Longrightarrow \text{weakly Riesz} \Longleftrightarrow \text{Arens regular} \Longrightarrow \text{small-1-1}\Longrightarrow \text{Not sAir}.\]
It could be worth to observe that weak sequential completeness is only needed for the penultimate  implication.

%{\tt I marked this remark with tt, it seemed to me more suitable for our own reflection
%\begin{remark}{\tt On one hand, small-1-1 ideals are  very close to be Arens regular since  $a\mJ^{**}=ea\mJ^{**}\subseteq \mJ$ and so $a\mJ^{**}\subseteq \mathcal Z_1(\mJ^{**})$ for every $a\in \mJ$.
%
%On the other hand,  this theorem cannot be applied directly to show $\mJ$ is Arens regular.
%For this to happen, one needs $i^*(WAP(\mA))^\perp\mJ^{**}=\{0\}$, which is the same as
%$\mJ^{**}\cdot\mJ\subseteq\overline{WAP(\A)}.$ In other words, we are back to square one:
%are small-1-1 ideals weak Riesz ideals?}
%\end{remark}
%}

%\end{document}

\section{Strong Arens irregularity}
In this section we deal with the strong Arens irregularity of closed ideals of Banach algebras  in the \wasabi class.
%, i.e., of Banach algebras which are weakly sequentially complete, have  a  bai and are an ideal in their  second dual.

Algebras in the \wasabi class are always strongly Arens irregular, \cite[Theorem 2.2]{balapy}.  In the particular case of  the group algebra $L^1(G)$ of a compact group $G$, this was  proved by Isik-Pym-\"Ulger in \cite{IPU87}.  Lau-Losert  \cite{LL} obtained the same result for   the Fourier algebra $A(\Gamma)$ of a discrete amenable group $\Gamma$.  Bounded approximate identities were essential in proving these results. We now see that on ideals of algebras in this class, which may not have bounded approximate identities,  a weaker assumption yields the same result.
%We deal with strong Arens irregularity of closed ideals $\mJ$ of \wasabi Banach algebras.
%We first  determine precisely when $\mJ$ is strongly Arens irregular.
A different approach, based on \cite{IPU87},
 was used in   \cite{EFG} to get this result for  closed ideals in the group algebra of a compact Abelian group.

\begin{lemma} \label{lemm6} \label{lem6}Let $ \mA $ be a Banach algebra  and let $ \mJ $ be a  closed  ideal of $\mA$.
 \begin{enumerate}
\item If the linear span of $\mJ^{**} \cdot \mJ^*  $ is dense in $\mJ^*, $ then $\mathcal Z_1(\mJ^{**})\subseteq \mathcal Z_1(\mA^{**}).$
\item  If  the linear span of  $  \mJ^* \cdot \mJ^{**} $  is dense in $\mJ^*  $,
then $\mathcal Z_2(\mJ^{**})\subseteq \mathcal Z_2(\mA^{**}).$
\item  If  the linear span of    $\mJ^{**} \cdot \mJ^* \cdot \mJ^{**} $    is dense in $\mJ^*  $,
then $\mathcal Z_i(\mJ^{**})\subseteq \mathcal Z_i(\mA^{**}),$ $i=1,2.$
\end{enumerate}
\end{lemma}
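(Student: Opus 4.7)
The plan is, in each of the three parts, to reduce the inclusion $\mathcal Z_i(\mJ^{**})\subseteq \mathcal Z_i(\mA^{**})$ to an equality of two elements of $\mA^{**}$, which can then be checked by pairing against a dense subset of $\mJ^*$ provided by the respective density hypothesis; the verification itself is a rearrangement that culminates in an application of the topological centre hypothesis on $m\in\mJ^{**}$.

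First I would check that $\mJ^{**}$ is a two-sided ideal of $\mA^{**}$ for both Arens products. The key point is that for $\varphi\in \mJ^\perp$ and $b\in\mJ$, both $\varphi\cdot b$ and $b\cdot\varphi$ vanish identically on $\mA$, because $bc$ and $cb$ lie in $\mJ$ whenever $c\in\mA$; propagating this, one sees that $mn$ and $m\diamondsuit n$ belong to $\mJ^{**}$ whenever $m\in\mJ^{**}$ and $n\in\mA^{**}$. Accordingly, the equality $mn=m\diamondsuit n$ in $\mA^{**}$ is equivalent to the pairings $\langle mn,\psi\rangle$ and $\langle m\diamondsuit n,\psi\rangle$ coinciding for every $\psi\in\mJ^*$.

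For (i), fix $m\in \mathcal Z_1(\mJ^{**})$ and, using the density hypothesis, test against $\psi=n_0\cdot\psi_0$ with $n_0\in\mJ^{**}$ and $\psi_0\in\mJ^*$. Pairing $mn$ against $\psi$ and using the first Arens identity $\langle X,n_0\cdot\psi_0\rangle=\langle Xn_0,\psi_0\rangle$ together with the associativity of the first product, one rewrites $\langle mn,\psi\rangle$ as $\langle m(nn_0),\psi_0\rangle$; since $nn_0\in\mJ^{**}$, the centre assumption on $m$ converts this into $\langle m\diamondsuit (nn_0),\psi_0\rangle$. On the other side, $\langle m\diamondsuit n,\psi\rangle$ equals $\langle (m\diamondsuit n)n_0,\psi_0\rangle$, and Lemma~\ref{reg}(ii) collapses $(m\diamondsuit n)n_0$ into $m\diamondsuit (nn_0)$. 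The two pairings agree, which gives the desired equality. Part (ii) is the mirror computation: test against $\psi_0\cdot n_0$, use the second Arens identity $\langle X,\psi_0\cdot n_0\rangle=\langle n_0\diamondsuit X,\psi_0\rangle$ and Lemma~\ref{reg}(ii) to rewrite $n_0\diamondsuit (nm)$ as $(n_0\diamondsuit n)m$, and invoke $m\in\mathcal Z_2(\mJ^{**})$ on the element $n_0\diamondsuit n\in\mJ^{**}$. Part (iii) is obtained by testing against an element of the form $(n_1\cdot\psi_0)\cdot n_2$ and iterating these manoeuvres with two applications of Lemma~\ref{reg}(ii); both $\langle mn,\psi\rangle$ and $\langle m\diamondsuit n,\psi\rangle$ (respectively $\langle nm,\psi\rangle$ and $\langle n\diamondsuit m,\psi\rangle$) collapse to a common expression of the form $\langle n_2\diamondsuit m\diamondsuit (nn_1),\psi_0\rangle$ (respectively $\langle n_2\diamondsuit n\diamondsuit (mn_1),\psi_0\rangle$), yielding both inclusions simultaneously.

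The main obstacle is essentially one of bookkeeping: one must keep track of which Arens product is being used at each step, apply Lemma~\ref{reg}(ii)—whose hypothesis that $\mA$ be an ideal in $\mA^{**}$ is in force throughout this section—at precisely the moment a swap between $\cdot$ and $\diamondsuit$ is required, and invoke the centre hypothesis on $m$ only once the factor it acts upon has been rearranged to lie inside $\mJ^{**}$.
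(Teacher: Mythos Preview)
Your argument for (i) and (ii) is essentially the paper's own: test $mp=m\diamondsuit p$ against $n_0\cdot\psi_0$ (respectively $\psi_0\cdot n_0$), rewrite via associativity, apply Lemma~\ref{reg}(ii) once on the $\diamondsuit$-side, and invoke the centre hypothesis on the factor $pn_0$ (respectively $n_0\diamondsuit p$) that now lives in $\mJ^{**}$. Your preliminary remark that $\mJ^{**}$ is a two-sided ideal in $\mA^{**}$ is exactly what the paper invokes to justify the first and last steps of the chain.

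For (iii) you take a different route from the paper. The paper does not repeat the computation against $(n_1\cdot\psi_0)\cdot n_2$; instead it observes, via Lemma~\ref{reg}(i), that $\mJ^{**}\cdot\mJ^*\cdot\mJ^{**}$ is contained in both $\mJ^{**}\cdot\mJ^*$ and $\mJ^*\cdot\mJ^{**}$, so density of the triple span forces density of each of the two single spans, and then (i) and (ii) apply directly. Your direct calculation also works (and the common expressions you name are correct), but it is longer and duplicates effort already expended in (i) and (ii); the paper's reduction is a one-line shortcut. A small point: in the $\mathcal Z_1$ case of (iii) only \emph{one} application of Lemma~\ref{reg}(ii) is actually needed, on the $m\diamondsuit n$ side; the $mn$ side only uses associativity of the first Arens product.
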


\begin{proof}
We shall prove the first statement, the  second statement is proved in the same manner.
Let $m\in \mathcal Z_1 (\mJ^{**})$, $p\in \mA^{**}$ and
$\varphi\in \mA^{*}$. Assume first that  $i^*(\varphi)=n\cdot i^*(\psi)$ for some $\psi\in \mA^*$ and $n\in \mJ^{**}$. Then,  we find
\begin{align*}\langle mp, \varphi\rangle &=\langle mp, i^*(\varphi)\rangle
=\langle mp,  n\cdot i^*(\psi)\rangle=
\langle (mp)n, i^*(\psi)\rangle=\langle m(pn), i^*(\psi)\rangle
\\&=\langle m\diamondsuit (pn), i^*(\psi)\rangle
=\langle (m\diamondsuit p)n, i^*(\psi)\rangle
=\langle  m\diamondsuit p, n\cdot i^*(\psi)\rangle
\\&=\langle m\diamondsuit p, i^*(\varphi)\rangle
=\langle m\diamondsuit p, \varphi\rangle,\end{align*}
 using the fact that $\mJ^{**}$ is an ideal in $\mA^{**}$ for the first and last equalities,  and $m\in \mathcal Z_1 (\mJ^{**})$ for the fifth and Lemma \ref{reg} for the sixth.

The condition on the linear span  of $\mJ^{**} \cdot \mJ^*  $ being dense in $\mJ^* $
yields then \begin{equation*}mp=m\diamondsuit p\quad\text{for every}\quad p\in \mA^{**},\end{equation*}
that is, $m\in Z_1(\mA^{**}).$ Thus  $\mathcal Z_1 (\mJ^{**})\subseteq \mathcal Z_1 (\mA^{**}).$

To prove Statement (iii), recall that $(\mJ^{**} \cdot \mJ^*) \cdot \mJ^{**}=
\mJ^{**} \cdot( \mJ^* \cdot \mJ^{**})$ by Lemma \ref{reg}(i).
So, if the linear span  of   $\mJ^{**} \cdot \mJ^* \cdot \mJ^{**}  $    is dense in $\mJ^*  $,
then each of the linear spans of $\mJ^{**} \cdot \mJ^*  $ and $  \mJ^* \cdot \mJ^{**} $ is dense in $\mJ^*  $.
Statements (i) and (ii) imply therefore  that  $\mathcal Z(\mJ^{**})\subseteq Z(\mA^{**}).$
\end{proof}

Baker, Lau and Pym proved in \cite[Theorem 2..2(ii)]{balapy} that $\mathcal Z_1(\mA^{**})=\mA$ for \wasabi algebras.
Their proofs can also be applied to show that $\mathcal Z_2(\mA^{**})=\mA$. So  \wasabi algebras are sAir by
\cite[Theorem 2..2(ii)]{balapy}. Our next theorem characterizes the closed ideals in \wasabi algebras which are lsAir, rsAir or sAir.

\begin{theorem}  \label{lem6}Let $ \mA $ be a \wasabi algebra  and let $ \mJ $ be a  closed  ideal of $\mA$.
 \begin{enumerate}
\item   If $\mJ$  is left faithful, then $\mJ$ is lsAir if and only if the linear span of $\mJ^{**} \cdot \mJ^*  $ is dense in $\mJ^*.$
\item  If $\mJ$  is right faithful, then $\mJ  $ is rsAir if and only if the linear span of  $\mJ^* \cdot \mJ^{**} $  is dense in $\mJ^*  $.
\item If $\mJ$  is either left faithful or right faithful, then $\mJ$ is sAir if and only if the linear span of  $\mJ^{**} \cdot \mJ^* \cdot \mJ^{**} $ is dense in $\mJ^*.$
\end{enumerate}
\end{theorem}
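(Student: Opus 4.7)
The plan is to prove each of the three biconditionals by treating the two implications separately. For the backward direction $(\Leftarrow)$ of all three parts, which requires no faithfulness hypothesis, the plan is to combine Lemma \ref{lemm6} with the fact that $\mA$ is in the \wasabi class. Assuming the relevant density, Lemma \ref{lemm6} yields $\mathcal{Z}_i(\mJ^{**})\subseteq \mathcal{Z}_i(\mA^{**})$ for the appropriate $i\in\{1,2\}$; the Baker--Lau--Pym theorem \cite[Theorem 2.2]{balapy} gives $\mathcal{Z}_i(\mA^{**})=\mA$, so $\mathcal{Z}_i(\mJ^{**})\subseteq \mA\cap \mJ^{**}=\mJ$ (using that the norm-closed $\mJ$ is weakly closed in $\mA$), and the converse inclusion $\mJ\subseteq\mathcal{Z}_i(\mJ^{**})$ is automatic.

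For the forward direction $(\Rightarrow)$ of (i), I will argue by contrapositive using Hahn--Banach. If $\overline{\mathrm{span}}(\mJ^{**}\cdot\mJ^*)\neq \mJ^*$, pick $r\in\mJ^{**}\setminus\{0\}$ annihilating it. Unpacking via the definition of the first Arens product $\langle rm,\varphi\rangle=\langle r,m\cdot\varphi\rangle$ forces $rm=0$ for every $m\in\mJ^{**}$, so the map $m\mapsto rm$ is identically zero, making $r\in\mathcal{Z}_1(\mJ^{**})=\mJ$ by the lsAir hypothesis. Restricting $m$ to $\mJ$ then gives $r\mJ=\{0\}$ with $r\in\mJ$, and left-faithfulness of $\mJ$ forces $r=0$, the desired contradiction. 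The proof of (ii) is entirely symmetric, with the second Arens product and right-faithfulness.

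For (iii), the same contrapositive-Hahn--Banach scheme is used, but both Arens products must interact. Using the associativity $(m\cdot\varphi)\cdot n = m\cdot(\varphi\cdot n)$ of Lemma \ref{reg} together with the defining identities, annihilating $\mJ^{**}\cdot\mJ^*\cdot\mJ^{**}$ is equivalent to
\[
(n\diamondsuit r)m \;=\; n\diamondsuit(rm) \;=\; 0\quad\text{for all } m,n\in\mJ^{**}.
\]
Varying $m$ (resp.\ $n$) turns $n\diamondsuit r$ (resp.\ $rm$) into a one-sided Arens annihilator of $\mJ^{**}$, so it automatically belongs to $\mathcal{Z}_1(\mJ^{**})$ (resp.\ $\mathcal{Z}_2(\mJ^{**})$), both of which sAir collapses onto $\mJ$. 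Restricting the remaining free variable to $\mJ$ converts these Arens-annihilator statements into ordinary multiplication identities inside $\mJ$, at which point the assumed one-sided faithfulness is invoked to pin $r=0$.

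The main obstacle I anticipate is precisely the last step of (iii): the theorem allows just one of left- or right-faithfulness, so one has to shuttle information between the two identities $(n\diamondsuit r)m=0$ and $n\diamondsuit(rm)=0$, using the sAir collapse $\mathcal{Z}_i(\mJ^{**})=\mJ$ for both $i=1,2$ to bring matters back to an equation inside $\mJ$ of the handedness to which the available faithfulness applies. Under left-faithfulness the natural route is to apply it first to $(n\diamondsuit r)\mJ=\{0\}$ to kill $n\diamondsuit r$ for every $n\in\mJ^{**}$, re-enter $\mathcal{Z}_2(\mJ^{**})=\mJ$, and then exploit the second identity with the other free variable; the right-faithful case is the dual.
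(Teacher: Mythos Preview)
Your treatment of the backward directions of (i)--(iii) and the forward directions of (i) and (ii) is correct and essentially identical to the paper's. (Your (i) is even slightly slicker: you note that $rm=0$ for all $m$ already makes $m\mapsto rm$ weak$^\ast$-continuous, hence $r\in\mathcal Z_1(\mJ^{**})$, whereas the paper separately verifies $r\diamondsuit m=0$ as well.)

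The forward direction of (iii), however, has a genuine gap. Your bootstrap correctly gives $n\diamondsuit r\in\mathcal Z_1(\mJ^{**})=\mJ$, then $(n\diamondsuit r)\mJ=\{0\}$, then $n\diamondsuit r=0$ for all $n$ by left-faithfulness, and hence $r\in\mathcal Z_2(\mJ^{**})=\mJ$. But the promised final move, ``exploit the second identity with the other free variable'', does not close. From $n\diamondsuit(rm)=0$ you can only extract, after restricting $n$ to $\mJ$, the identity $\mJ(rm)=\{0\}$; and from $n\diamondsuit r=0$ you only get $\mJ r=\{0\}$. Both are on the wrong side for the left-faithfulness convention you fixed in (i) (namely $a\mJ=\{0\}\Rightarrow a=0$), and there is no evident way to manufacture $r\mJ=\{0\}$ from what you have.

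The paper closes this gap by a different mechanism that you do not invoke: it first \emph{feeds back through part (i)}. Since $\mJ$ is lsAir and left faithful, (i) gives that the span of $\mJ^{**}\cdot\mJ^*$ is dense in $\mJ^*$. The paper then shows, by two short computations, that for every $m\in\mJ^{**}$ both $m\diamondsuit r$ and $mr$ annihilate $\mJ^{**}\cdot\mJ^*$; density forces $mr=m\diamondsuit r=0$ for all $m$, placing $r$ in $\mathcal Z_2(\mJ^{**})=\mJ$ and simultaneously producing the annihilation identity inside $\mJ$ needed for faithfulness. The key idea you are missing is precisely this appeal to the density already established in (i); your double use of sAir via $\mathcal Z_1$ and $\mathcal Z_2$ alone does not generate annihilation on the correct side.
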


\begin{proof} We prove Statement (i), the second statement follows in the same way.
Suppose that the linear span of $\mJ^{**} \cdot \mJ^*  $ is dense in $\mJ^*.$
Then by lemma \ref{lemm6}, $\mathcal Z_1(\mJ^{**})\subseteq \mathcal Z_1(\mA^{**}).$
Since $\mA$ is lsAir by \cite{balapy}, we conclude that $\mathcal Z_1 (\mJ^{**})= \mJ$.

For the converse, suppose that the linear span of $  \mJ^{**} \cdot \mJ^{*} $  is not dense in $\mJ^*  $
and pick a  nonzero $m\in \mJ^{**}$  which is zero on $  \mJ^{**} \cdot \mJ^{*} $.
Then clearly $mn=0$ for every $n\in \mJ^{**}$.
On the other hand, \[\langle\varphi\cdot m, a\rangle=\langle m,a\cdot\varphi\rangle=0\quad
\text{}\quad \varphi\in \mJ^*\quad \text{and}\quad a\in \mJ.\]
It follows that $\varphi\cdot m=0$, and so
$\langle m\diamondsuit n,\varphi\rangle=\langle n, \varphi\cdot m\rangle=0.$
Therefore \[mn=m\diamondsuit n=0\quad\text{ for every}\quad\in \mJ^{**},\] which means that  $m\in \mathcal Z_1(\mJ^{**})$.
Note that $m$ cannot be in $\mJ$  since $m\mJ=\{0\}$ and $\mJ$ is left faithful. Thus $\mJ$ is not lsAir, as required.

As for Statement (iii), we apply again Lemma \ref{lemm6} and \cite{balapy}. We find $\mJ$ is sAir when the linear span of $\mJ^{**} \cdot \mJ^* \cdot \mJ^{**}$ is dense in $\mJ^*$.
Conversely, let $\mJ$ be left faithful,  suppose that it  is sAir, and let $ r \in (\mJ^{**} \cdot \mJ^* \cdot \mJ^{**})^\perp $ and $m  \in \mJ^{**} $. Then for each  $ \varphi \in \mJ^* $ and $ n \in  \mJ^{**}$, we have
\begin{align*}
    \langle m \diamondsuit r , n \cdot \varphi\rangle
    =\langle  r , n \cdot  \varphi\cdot m\rangle
    =0.
\end{align*}
Since $\mJ $  is lsAir, the linear span of $\mJ^{**} \cdot \mJ^*  $ is dense in $\mJ^*$
 by Statement (i). So $m \diamondsuit r=0$. On the other hand,
$\langle mr, n\cdot \varphi\rangle=\langle m,r\cdot( n\cdot\varphi)\rangle$, where \[\langle r\cdot(n\cdot\varphi), a\rangle=
\langle  r, n\cdot\varphi\cdot a\rangle=0\quad\text{for all}\quad a\in\mJ.\]
Thus $  m r =0 $ on $\mJ^{**}\cdot \mJ^*,$ and so $mr=0$ by Statement (i) since $\mJ $  is lsAir. Therefore,
$ m  r =m \diamondsuit r=0, $ showing first that $ r \in  \mathcal Z_{2} (\mJ^{**}).$ Since $\mJ$ is rsAir, $r$ must be therefore in $\mJ.$
Since in particular $\mJ r=\{0\}$ and $\mJ$ is left faithful, we deduce that $r=0.$

In the same manner, using the fact that $\mJ$ is rsAir we may verify that $r\diamondsuit m= rm=0$., and so $r\in \mJ$ since $\mJ$
is lsAir. In particular, $r\mJ=\{0\}$ yields $r=0$ when $\mJ$ is right faithful.

In either cases, we conclude that $\mJ^{**} \cdot \mJ^* \cdot \mJ^{**}  $  must be dense in $\mJ^*  $.
\end{proof}

When $G$ is a locally compact Abelian group, a closed ideal of $L^1(G)$ has a bai if and only if it is  the kernel
of a  closed set in the Boolean ring $\Omega_{{\widehat G}_d}$ generated by the left cosets of subgroups of the group ${\widehat G}$ with the discrete topology, known as the coset ring
of $\widehat G$.
  This characterization was proved by Reiter
   in \cite{reit72} and by Liu, van Rooij and Wang  in \cite{liurooijwang73}. See also \cite[Section 5.6]{kanibook}, and \cite{K2000} for more developments and references.

For general compact groups,  Rider \cite{rider70} defined the analogue of $\Omega_{{\widehat G}}$ and proved,  [Theorem 1, loc. cit.],   that one can associate to  every  $E\in \Omega_{{\widehat G}}$   a central idempotent measure $\mu\in M_E(G)$. It follows from \cite[Theorem 2]{liurooijwang73} that $L_E^1(G)$ has a bounded approximate identity whenever $E\in \Omega_{{\widehat G}}$.

For a class of non-Abelian compact groups which  includes all  connected groups,  the converse is also true, \cite{rider73}.

The  dual of Reiter-Liu-van Rooij-Wang's theorem, regarding, $A(G)$, was proved in  \cite{forrkanilauspro03}:
a closed ideal of the Fourier algebra $A(G)$ of  a locally compact amenable group $G$ has a bai if and only if it consists  of the functions that vanish on a   closed set in the Boolean ring $\Omega_{G_d}$ generated by the left cosets of subgroups of
$G_d$.

We summarize these results in our context.

\begin{corollary} ~\label{cor:SAIBAI}
\begin{enumerate}
\item If $G$ is a compact group and $E\in  \Omega_{{\widehat G}}$, then $L_E^1(G)$ is sAir.
\item If $\Gamma$ is  a discrete amenable group and  $E\in \Omega_{\Gamma}$, then $A_E(\Gamma)$ is sAir.
\end{enumerate}
\end{corollary}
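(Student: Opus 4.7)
The plan is to reduce both statements to the fact, proved in \cite[Theorem 2.2(ii)]{balapy}, that every \wasabi algebra is sAir. The crucial observation is that, under the hypothesis $E\in\Omega_{\widehat G}$ (respectively, $E\in\Omega_\Gamma$), the ideal $L^1_E(G)$ (respectively, $A_E(\Gamma)$) itself becomes a member of the \wasabi class, and its strong Arens irregularity then follows without invoking the preceding theorem at all.

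For part (i), I would verify the three defining properties of a \wasabi algebra in turn. The existence of a bai in $L^1_E(G)$ is precisely what the combination of Rider's and Liu--van Rooij--Wang's results recalled in the paragraphs preceding the statement supplies. Weak sequential completeness passes to norm-closed subspaces of $L^1(G)$, so it is inherited for free. The only point deserving an argument is that $L^1_E(G)$ is an ideal in its own bidual. I would prove the general fact: if $\mJ$ is a closed ideal of a Banach algebra $\mA$ which is itself an ideal in $\mA^{**}$, then $\mJ$ is an ideal in $\mJ^{**}$. Indeed, identifying $\mJ^{**}$ with $\mJ^{\perp\perp}\subseteq \mA^{**}$ via $i^{**}$, separate weak$^*$-continuity of the Arens products in the appropriate variable, combined with Goldstine's theorem, shows that $\mJ^{\perp\perp}$ is an ideal in $\mA^{**}$. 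Hence for $a\in\mJ$ and $m\in\mJ^{**}$ one has $am\in\mA$ (since $\mA$ is an ideal in $\mA^{**}$) and $am\in \mJ^{\perp\perp}$; the Hahn--Banach identification $\mJ^{\perp\perp}\cap \mA=\mJ$ then gives $am\in\mJ$.

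With these three properties in hand, \cite[Theorem 2.2(ii)]{balapy} applies to $L^1_E(G)$ and yields sAir. Part (ii) is identical in outline: the bai is supplied by \cite{forrkanilauspro03}, weak sequential completeness is inherited from $A(\Gamma)$ as a predual of a von Neumann algebra, and the ideal-in-the-bidual property transfers exactly as above.

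Alternatively, one can stay within the results of this section and apply Theorem \ref{lem6}(iii): a bai makes $\mJ$ both left and right faithful, and any weak$^*$-limit $e$ of the bai in $\mJ^{**}$ satisfies $e\cdot\varphi=\varphi=\varphi\cdot e$ for every $\varphi\in\mJ^*$, which immediately gives $\mJ^{**}\cdot\mJ^*\cdot\mJ^{**}=\mJ^*$. The main obstacle in either route is essentially conceptual rather than technical: one must realise that the elaborate decomposition machinery of Section \ref{Grosser}, developed precisely for ideals \emph{without} a bai, is bypassed altogether when $E$ belongs to the coset ring.
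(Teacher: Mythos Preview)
Your proposal is correct and matches the paper's approach. The paper does not give an explicit proof of this corollary; it presents it as a summary of the bai existence results recalled in the paragraphs immediately preceding it, and later (at the start of the subsection on strongly Arens irregular ideals) states outright that ``in a \wasabi\ algebra, a closed ideal with a bai stays in the \wasabi\ class, and so it is sAir'' --- which is exactly your main route. Your alternative via Theorem~\ref{lem6}(iii) is also valid and perhaps explains the placement of the corollary in the text, but it is not the argument the paper itself invokes.
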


\begin{remark} Strongly Arens irregular ideals without bai are easy to construct.
For example, let $\mJ$ be as in Corollary \ref{cor:SAIBAI}, i.e. a closed ideal with a bai in a \wasabi algebra $\mA$ and let $\mB$ be a reflexive Banach algebra without identity  and consider
the closed ideal $\mJ\oplus_{1}\mB$ in $\mA\oplus_1\mB$
with the natural  product
\begin{align*}
    (a_1,b_1)(a_2,b_2)=(a_1a_2, b_1b_2).
\end{align*}
Then $\mJ\oplus_{1}\mB$ has no bai and the topological center of  $(\mJ\oplus_{1}\mB)^{**}=\mJ^{**}\oplus_{1}\mB$   equals  $\mJ\oplus_{1}\mB$, i.e., the ideal is sAir.  In Corollary \ref{cor:sum} we will develop this idea to find such ideals \emph{within} the original algebra $\mA$.
\end{remark}

\section{Extreme non-Arens regularity}
As seen in Proposition \ref{wsc}, ideals of algebras in the \wasabi class which are not small-1-1
are not Arens regular. We prove in this section that, at least when the bai is sequential, they are in fact enAr.
For this, we check that these ideals satisfy the  conditions required in  \cite [Corollary 3.10]{filagali21}
for a Banach algebra to be enAr.

The reader is directed for \cite{filagali21} or \cite{filagali22}
for definitions required in the proofs.
\begin{definition}\label{lone}
{\sc $\ell^1$-base}: Let $E$ be a normed space.   A bounded sequence  $B=\{a_n\colon n\in \N\}$ is  an \emph{$\ell^1$-basis} in $E$, with constant $K>0$,
when the inequality
\[\sum_{n=1}^p |z_k|\le K\left\|\sum_{k=1}^p z_ka_{n_k}\right\|\label{lonecondition}\]
holds for all $p\in\N$ and for every possible choice of scalars $z_1,\ldots ,z_p$ and  elements $a_{n_1},\ldots ,a_{n_p}$ in $B.$
\end{definition}
For the next proof, we need to recall the well-known theorem of Rosenthal \cite[Theorem 1]{rosenthal}, which asserts that a bounded sequence in a Banach space, always has a subsequence that is either weakly Cauchy or is an $\ell^1$-basis.

\begin{theorem}
Let $ \mA $ be a non-unital   Banach algebra in the \wasabi class whose   bai $(e_n)_n$ is sequential,
and let $ \mJ$ be an ideal in $\mA$ that is not small-1-1. Then
%\begin{enumerate}
%\item
 there is a linear bounded map from the quotient space $\mJ^*/W AP(\mJ )$ onto $\ell^\infty.$
In particular, $\mJ$ is non-Arens regular.
If $\mJ$ is in addition separable, then $\mJ$ is enAr.
%\end{enumerate}
\end{theorem}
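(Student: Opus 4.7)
The plan is to verify the sufficient conditions of \cite[Corollary 3.10]{filagali21}, which produce both a bounded surjection $\mJ^*/WAP(\mJ)\twoheadrightarrow\ell^\infty$ and, under separability, the enAr conclusion from an appropriate $\ell^1$-basis inside $\mJ$ arising from a bilinear product structure. The task thus reduces to manufacturing such an $\ell^1$-basis out of the failure of the small-$1$-$1$ property.

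First I would exploit this failure to pick $m,n\in\mJ^{**}$ with $emn\notin\mJ$. Goldstine's theorem together with the sequentiality of the bai $(e_k)$---through a diagonal argument against the countable family $\{e_k\}$---then produces bounded sequences $(a_k)_k,(b_k)_k\subseteq\mJ$ such that the products $c_k:=a_kb_k$ (which lie in $\mJ$ because $\mJ$ is an ideal) admit $emn$ as a weak$^*$ cluster point in $\mJ^{**}$. Since $\mA$, hence its closed subspace $\mJ$, is weakly sequentially complete, no subsequence of $(c_k)$ can be weakly Cauchy: any weak limit would place $emn$ in $\mJ$, contradicting our choice. Rosenthal's $\ell^1$-theorem thus yields, after relabeling, an $\ell^1$-basis $(c_k)\subseteq\mJ$ in the sense of Definition \ref{lone}.

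With this $\ell^1$-basis in hand, the Hahn-Banach theorem immediately promotes the $\ell^1$-basis inequality of Definition \ref{lone} to a bounded linear surjection $T\colon\mJ^*\to\ell^\infty$ defined by $T(\varphi)=(\varphi(c_k))_k$: for every $(\lambda_k)\in\ell^\infty$ the assignment $c_k\mapsto\lambda_k$ extends from $\overline{\mathrm{span}}\{c_k\}$ to a functional of norm at most $K\|(\lambda_k)\|_\infty$ on $\mJ$. The bilinear factorization $c_k=a_kb_k$ then allows Grothendieck's double-limit criterion to be applied to the array $\varphi(a_nb_m)$: after a Ramsey-style thinning of $(a_k),(b_k)$ that preserves the $\ell^1$-basis inequality (as carried out in \cite[Section 3]{filagali21}), $T$ descends to a bounded surjection $\mJ^*/WAP(\mJ)\twoheadrightarrow\ell^\infty$. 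This forces $WAP(\mJ)\subsetneq\mJ^*$, hence the non-Arens regularity of $\mJ$. Separability of $\mJ$ is the input that \cite[Corollary 3.10]{filagali21} needs to upgrade this surjection to a closed embedding of $\mJ^*$ into $\mJ^*/WAP(\mJ)$, which is precisely enAr.

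The main obstacle I anticipate is the last step: the Ramsey refinement has to force $WAP(\mJ)$-functionals to evaluate appropriately on the sequence $(c_k)$ while the $\ell^1$-basis inequality must survive every such passage to a subsequence. The sequentiality of $(e_k)$ (and the resulting access to a \emph{countable} approximate identity, needed to ground Rosenthal's theorem and the subsequent diagonal constructions) together with the machinery of \cite[Section 3]{filagali21} are precisely what orchestrates this balancing act.
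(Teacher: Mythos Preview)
Your overall strategy---exploit the failure of small-1-1 to produce an $\ell^1$-basis via Rosenthal and then feed the bilinear structure into \cite[Corollary 3.10]{filagali21}---is the same as the paper's, but your execution diverges at the crucial step and leaves a real gap.

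The paper does not go through Goldstine. It takes the bai itself as the source of the sequences: with $p,q\in\mJ^{**}$ such that $epq\notin\mJ$, it sets $a_n=e_np$ and $b_n=e_nq$ (both in $\mJ$ because $\mA$ is an ideal in $\mA^{**}$ and $\mJ$ is a closed ideal in $\mA$). The relevant $\ell^1$-basis is then the single sequence $(e_npq)_n$: this sequence cannot have a weakly Cauchy subsequence, since any weak limit $a\in\mJ$ would force $epq=e(fpq)=ea=a\in\mJ$ for $f$ a cluster of the subsequence. The point of this choice is that it makes the double array $a_nb_m=(e_np)(e_mq)$ behave rigidly: for fixed $n$, $\lim_m\|a_nb_m-e_npq\|=0$ because the bai acts on $e_np\in\mJ$, and symmetrically in the other index. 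This is exactly the ``approximating segments'' hypothesis of \cite[Definition 3.3]{filagali21}, and the horizontal/vertical injectivity of \cite[Definition 3.2]{filagali21} is automatic from the $\ell^1$-basis property of $(e_npq)_n$.

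Your route misses this. A Goldstine/diagonal construction gives $(a_k),(b_k)$ converging weak$^*$ to $m,n$, but the diagonal products $a_kb_k$ have no reason to cluster at $emn$ (the Arens product is not jointly weak$^*$-continuous), so your Rosenthal step is already unmoored. More seriously, even granting an $\ell^1$-basis $(c_k)$, you never verify the approximating-segment and injectivity hypotheses that \cite[Corollary 3.10]{filagali21} actually requires; ``Ramsey-style thinning'' does not produce, for each fixed $n$, norm convergence of $a_nb_m$ to a term of your $\ell^1$-basis. The bai trick $a_n=e_np$, $b_n=e_nq$ is precisely what delivers this structure for free, and it is the idea your proposal is missing.
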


\begin{proof}
Let $e$ be a mixed identity associated with $(e_n)_n$
, and pick  $ p, q \in \mJ^{**}$ such that $epq\notin \mJ$.
Since $\mJ$ is an ideal in $\mA$, we see that the sequence $ (e_n  p q)_n $ is in $\mJ$. This   sequence   cannot  have any weakly Cauchy subsequence.
For,  otherwise,  there would exist a subsequence $ (e_{n_k}    pq)_k $ with weak limit $ a $ and, due to weak sequential completeness, $a\in  \mJ $, i.e.,
 $\lim_n e_{n_k}  p q= a $ in the weak topology $ \sigma( \mA , \mA^*) $ (the limit $a$ is in $\mJ$ since $\mJ$ is convex and so weakly closed).
So if $f$ is any mixed identity associated with $(e_{n_k})_k$, this implies that $f p q=a\in \mJ.$
Therefore, \[ e  p q = (ef)pq = e (fpq) =ea=a\in\mJ,\]
  whence a contradiction.
 Hence, by Rosenthal theorem we can assume, upon taking a suitable subsequence, that $ (e_n  p  q)_n $ is an $ \ell^1 $-base. Let then
\begin{align*}
A&= \{ a_n = e_{n}  p~ :  n \in \mathbb{N}\}\\
B&= \{ b_n =e_{n}   q ~:  n\in \mathbb{N}\}.
\end{align*}
The sets $A$ and $B$ are in $\mJ$ since $\mJ$ is an ideal in $\mA.$
Consider as well  the upper and lower triangles defined by $A$ and $B$, given by
 \begin{align*}T^{u}_{AB}&=\left\{a_n  b_m\colon \; n< m\right\}\; \mbox{ and } \\
T^{l}_{AB}&=\left\{a_n  b_m\colon\; m< n\right\}.\end{align*}
Define
\begin{align*}
X_1&= \{ x_{nm} = e_{n}  p  q~ : n<m\}\\
X_2&= \{ x_{nm} = e_{m}  p  q~ : m<n\}.
\end{align*}
Since $\mJ$ is an ideal in $\mA$, we see that  $e_{n}p\in \mJ$ for each $n\in\N$, and so we have
\begin{align*}
\lim_m \Vert x_{nm}- a_nb_m\Vert
&=\lim_m \Vert e_{n} p q- (e_{n}  p )  (e_{m}  q )\Vert\\&
\le \lim_m \Vert e_{n} p- (e_{n}  p )e_{m}\Vert\Vert q\Vert=0
\end{align*}
for each $ n \in \mathbb{N} $. Thus, \[\lim_m \Vert x_{nm}- a_nb_m\Vert=0\;\text{for each}\;  n \in \mathbb{N}. \] The same argument gives
\begin{align*}
\lim_n \Vert x_{nm}- a_nb_m\Vert=0\;\text{for each}\;  m \in \mathbb{N}.
%&=\lim_n \Vert e_{2m+1} pq- (e_{2n} p)  (e_{2m+1}  q )\Vert\\
%&\le &\lim_n \Vert e_{2m+1} p- (e_{2n} p)  (e_{2m+1}   )\Vert\Vert q\Vert
%\\=\lim_n \Vert e_{2m+1}   m   p- e_{2m+1}   m   p \Vert=0.
\end{align*}
 Hence the sets $ X_1,X_2 $,
 approximate, respectively, segments in $ T^u_{AB} $ and $ T^l_{AB} $ in the sense of \cite[Definition 3.3]{filagali21}. Since $ (e_n pq)_n $ is an $ \ell^1 $-base as well,  $ X_1 $ is horizontally injective according to \cite[Definition 3.2]{filagali21}).
Similarly, $ X_2 $ is vertically injective. Now, \cite [Corollary 3.10]{filagali21}   completes the proof.
\end{proof}

%\corollary
%Let $ G $ be a second countable compact group and $ E $ be a nonempty subset of $ \Sigma $. If $ %E $ is not a small-1-1 set, then $L^1_E(G) $ is ENAR.

\section{Concrete examples}
 In this section we deal with the most prominent  \wasabi algebras: group algebras  of compact groups and Fourier algebras of discrete amenable groups.    We will explore the existence, within these algebras,  of infinite dimensional ideals that are reflexive, of Arens regular ideals that are not  reflexive and of   strongly Arens irregular ideals  with and without bounded approximate identities.

All through this section $G$ will denote a compact group and $\Gamma $ a discrete group.   When $G$ (or $\Gamma$) are assumed to be commutative, this assumption will convey  that  $\Gamma=\widehat{G}$.

As already mentioned in Section 3, ideals in $L^1(G)$ can  be described in terms   of subsets $E$ of their dual object $\widehat{G}$, the set of all equivalence classes of irreducible  unitary representations of $G$.
% For a compact group $G$,  ideals in $L^1(G)$ will be described in terms of  their dual object $\widehat{G}$, the set of all equivalence classes of irreducible  unitary representations of $G$. As alreIf $\mu \in M(G)$ is a bounded measure,
% one can define the Fourier-Stieltjes transform $\widehat{\mu}$  which sends every $\pi \in \widehat{G}$ to an operator $\widehat{\mu}(\pi)$ on the Hilbert space $\h_\pi$ where $\pi$ is defined.
Every closed ideal $\mJ$ of $L^1(G)$ is of the form
 \[L_E^1(G)=\left\{f\in L^1(G)\colon \widehat{\mu}(\pi)=0, \mbox{ if } \pi\notin E\right\},\]
 for some $E\subseteq \widehat{G}$, see \cite[Theorem 38.7]{hewiross2}.

In the case of a discrete amenable group  $\Gamma$, an analogous description of ideals can be made.
Every subset of $\Gamma$ is then a set of synthesis by Theorem 6.1.1 of \cite{kanilaubook}. It follows then from \cite[Corollary 5.1.4]{kanibook} that
 every  ideal  $\mJ$  of $A(\Gamma)$ is of the form
\[ A_E(\Gamma)=\left\{u\in A(\Gamma)\colon u(\chi)=0, \mbox{ if } \chi\notin E\right\},\]
where $E=\Gamma\setminus \{s\in \Gamma\colon u(s)=0\mbox{ for every } u \in \mJ\}$, the complement of the zero-set of $\mJ$.

We now lay down the decomposition described in  Theorem \ref{8}  in the particular case of ideals in $L^1(G)$ and $A(\Gamma)$. This  decomposition follows directly from Theorem \ref{8} as soon as one takes into account that both algebras contain contractive approximate identities (see, e.g., \cite[Theorem 28.53]{hewiross2} and \cite[Theorem 2.7.2]{kanilaubook})  and that $\overline{L_E^1(G)}^w=M_E(G)$ and $\overline{A_E(\Gamma)}^w=B_E(\Gamma)$.
 \begin{corollary}\label{decs:concr}
  Let $G$ be a compact group and $\Gamma$ a discrete amenable group.
  \begin{itemize}
    \item For any  $E\subseteq \widehat{G}$,
    \[L_E^1(G)^\bd =\fR_e(M_E(G)) \oplus i^\ast(C(G))^\perp \cong M_E(G)\oplus i^\ast(C(G))^\perp\]
    \item For any  $E\subseteq \Gamma$,
    \[ A_E(\Gamma)^\bd=\fR_e(B_E(\Gamma))\oplus i^\ast (WAP(A(\Gamma))^\perp\cong B_E(\Gamma)\oplus i^\ast (WAP(A(\Gamma))^\perp.\]
      \end{itemize}
\end{corollary}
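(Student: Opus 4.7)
The plan is to read the corollary as the concrete instantiation of Theorem~\ref{8} in these two settings. Both $L^1(G)$ for $G$ compact and $A(\Gamma)$ for $\Gamma$ discrete amenable have already been identified in the text as \wasabi algebras with contractive bais, and closed ideals in them are parametrized by subsets $E$ of $\widehat G$ or $\Gamma$. For $\mJ = L_E^1(G)$ or $\mJ = A_E(\Gamma)$, Theorem~\ref{8}(i) gives
\[\mJ^{**} = e\mJ^{**} \oplus i^*(WAP(\mA))^\perp,\]
with $e\mJ^{**}$ isometrically isomorphic, via $\fR_e$, to the $w$-closure $\overline{\mJ}^w \subseteq M(\mA)$. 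All that is left is to identify (a) $WAP(\mA)$ and (b) $\overline{\mJ}^w$ concretely in each case.

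For the group algebra I would first argue that $WAP(L^1(G)) = C(G)$. By Baker--Lau--Pym (recalled in the text), $WAP(L^1(G)) = L^1(G) \cdot L^\infty(G)$, and compactness of $G$ forces this module action into $LUC(G) = C(G)$; equality follows from Cohen--Hewitt factorization since $C(G)$ is an essential $L^1(G)$-module. Next, Wendel's theorem identifies $M(L^1(G))$ with $M(G)$, under which the $w$-topology is exactly the weak$^*$-topology of $M(G) = C(G)^*$. The identification $\overline{L_E^1(G)}^w = M_E(G)$ then splits into two parts: weak$^*$-closedness of $M_E(G)$, immediate from weak$^*$-continuity of each Fourier coefficient $\mu \mapsto \widehat{\mu}(\pi)$; and weak$^*$-density of $L_E^1(G)$ in $M_E(G)$, which I would obtain by convolving $\mu \in M_E(G)$ against a bai of $L^1(G)$, noting that the convolutions remain in $L_E^1(G)$ because $M_E(G)$ is an ideal of $M(G)$. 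For the Fourier algebra, the analogous identification is $M(A(\Gamma)) = B(\Gamma)$ (Losert's theorem, valid for $\Gamma$ amenable), and $\overline{A_E(\Gamma)}^w = B_E(\Gamma)$ follows by the parallel argument: closedness because $B_E(\Gamma)$ is the intersection of the kernels of pointwise evaluations at points of $\Gamma\setminus E$, each of which lies in $A(\Gamma)\subseteq WAP(A(\Gamma))$ since $\Gamma$ is discrete; density by pointwise multiplication against a bai of $A(\Gamma)$, which stays inside $A_E(\Gamma)$ because $A_E(\Gamma)$ is an ideal of $B(\Gamma)$.

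The main obstacle I anticipate is the $w$-density of $L_E^1(G)$ in $M_E(G)$ and of $A_E(\Gamma)$ in $B_E(\Gamma)$: the closedness side is immediate, whereas density requires producing approximants that simultaneously lie in $L^1$ (respectively $A(\Gamma)$) and satisfy the $E$-support condition in the Fourier-transform sense. The key observation that keeps everything inside the $E$-supported subspace is precisely that these subspaces are ideals of the ambient algebra ($M(G)$ or $B(\Gamma)$), so convolving or pointwise-multiplying an element of the $E$-supported subspace with any element of the smaller algebra preserves the support condition. Once these identifications are established, the two decompositions in the corollary are a direct rewriting of Theorem~\ref{8}(i) with $WAP(L^1(G))$ replaced by $C(G)$ and $\overline{\mJ}^w$ replaced by $M_E(G)$ or $B_E(\Gamma)$ respectively.
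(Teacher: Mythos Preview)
Your approach is correct and is exactly the paper's: apply Theorem~\ref{8}, then identify $WAP(L^1(G))=C(G)$, $\overline{L_E^1(G)}^w=M_E(G)$ and $\overline{A_E(\Gamma)}^w=B_E(\Gamma)$ --- the paper simply asserts these last two identifications while you supply the details. One small slip: pointwise evaluations at $\gamma\in\Gamma$ are elements of $VN(\Gamma)$ (namely the translation operators $\lambda_\gamma$), not of $A(\Gamma)$, so the inclusion ``$A(\Gamma)\subseteq WAP(A(\Gamma))$'' is a type error; what you actually need is $\lambda_\gamma\in C^*_\lambda(\Gamma)=WAP(A(\Gamma))$, which is immediate.
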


\subsection{Infinite dimensional reflexive ideals}\label{sec:7}
 To describe   reflexive ideals in $A(\Gamma)$, we will make use of    the non-commutative $L^p$-spaces,  $L^p(\Gamma)$, $p\geq 1$.
 If $\tau$ is the canonical trace in $VN(\Gamma)$, given by $\tau(T)=\<{T \delta_e,\delta_e}$ ($\delta_e\in \ell^2(\Gamma)$), the norm of $T\in L^p(\Gamma)$  is defined by
\[ \norm{T}_p^p=\tau(|T|^p)\]
  and  $L^p(\Gamma)$ is made of  operators in $VN(\Gamma)$ for which this norm is finite. If $\gamma\in \Gamma$,
the translation operator by $\gamma$, i.e., the convolution operator defined by $\delta_\gamma$, is always in $L^p(\Gamma)$.

We refer to \cite{pisixu03} for a  summary of properties of noncommutative $L^p$-spaces. In particular that  $L^1(\Gamma)$ is isometrically isomorphic to the predual of $VN(\Gamma)$, i.e., to $A(\Gamma)$.

We start our work with a definition.
\begin{definition}
  Let $G$ and $\Gamma$ be, as fixed above, a compact group and a discrete group, respectively.
  \begin{itemize}
    \item We say that $E\subseteq \widehat{G}$ is a $\Lambda(p)$-set, $p>0$, if     there are $0<q<p$ and $C>0$ such that
        \[\norm{P}_p\leq C\, \norm{P}_q,\]
        for every  linear combination $P$ of matrix coefficients of representations belonging to  $E$ i.e., for  every trigonometric polynomial $P\colon G\to \C$ supported on $E$.
        \item We say that $E\subseteq \Gamma$ is a $\Lambda(p)$-set, $p>2$, if     there are $1<q<p$ and $C>0$ such that
        \[\norm{f}_{\lpg}\leq C\, \norm{f}_{\lqg},\]
        for every linear combination $f$ of translation operators by elements of $E$.
  \end{itemize}
\end{definition}

The notion of $\Lambda(p)$-set was introduced by Rudin \cite{rudin60} for Abelian groups and by  Picardello \cite{pica73} for discrete noncommutative groups. Sections 0 and 1 in \cite{harc99} provide a good introduction to $\Lambda(p)$-sets. Note that a set that is $\Lambda(p)$ for some $p$ is automatically $\Lambda(q)$ for every $0<q<p$. It is a consequence of H\"older's inequality that   if $E$ is a $\Lambda(p)$-set, $p>1$, then  $L^p_E(G)=L^1_E(G)$ and $L^p_E(\Gamma)=L^1_E(\Gamma)$. The proof of this  can be found in \cite[Theorem 37.7]{hewiross2}. The same proof works in $L^p(\Gamma)$ replacing H\"older's inequality in $L^p(G)$ for H\"older's inequality in $L^p(\Gamma)$ (see the introduction of \cite{pisixu03}). In particular both $L^1_E(G)$ and $L^1_E(\Gamma)$ are reflexive if $E$ is a $\Lambda(p)$-set.

%\begin{lemma}\label{lambda1=ref,l1}
%If $E\subset \widehat{G}$, $G$ a compact group, then  $L_E^1(G)$ is reflexive if and only if $E$ is a $\Lambda(1)$-set.
%\end{lemma}
%\begin{proof}
%  Lemma 2 of \cite{bachebe74} proves that   $L_E^1(G)$ is reflexive when $E$ is a $\Lambda(1)$-set. The Corollary of \cite{hare88} shows that $E$ is a $\Lambda(1)$-set whenever $L^1_E(G)$ is reflexive.
%\end{proof}%
%Both \cite{bachebe74} and \cite{hare88} prove that for a given $0<p$ every $\Lambda(p)$-set is also a $\Lambda(p+\varepsilon)$-set for some $\varepsilon>0$, hence the reflexivity even of $L_E^1(G)$.

We summarize here our last  remark.

\begin{theorem}\label{lambda1=ref,ag}
Let  $E\subset \Gamma$  ($E\subset  \widehat{G}$). If  $E$
is a $\Lambda(p)$-set, $p>1$, then $A_E(\Gamma)$ (respectively $L_E^1(G)$) is reflexive.
  \end{theorem}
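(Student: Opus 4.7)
The plan is to deduce reflexivity from the norm-equivalence produced by the $\Lambda(p)$ hypothesis. Concretely, I will show that the $\Lambda(p)$ condition forces $L^1_E(G)=L^p_E(G)$ (and, in the Fourier-algebra setting, that $A_E(\Gamma)$ coincides with a closed subspace of the noncommutative $L^p(\Gamma)$, via the isometric identification $A(\Gamma)\cong L^1(\Gamma)$). Once such a coincidence is available with equivalent norms, reflexivity is automatic: $L^p$ is reflexive for $1<p<\infty$, and a closed subspace of a reflexive Banach space is reflexive. So the whole work lies in establishing the identification; the transfer from reflexivity of $L^p_E$ to reflexivity of $L^1_E$ (respectively $A_E(\Gamma)$) is a one-line consequence.

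For the compact group $G$, since Haar measure is finite, H\"older's inequality gives $L^p(G)\subseteq L^1(G)$ with continuous inclusion, and consequently $L^p_E(G)\subseteq L^1_E(G)$. The $\Lambda(p)$ condition provides constants $0<q<p$ and $C>0$ such that $\|P\|_p\leq C\|P\|_q$ for every trigonometric polynomial $P$ supported on $E$. Log-convexity of $L^r$-norms (another application of H\"older) yields $\|P\|_q\leq \|P\|_p^{\theta}\|P\|_1^{1-\theta}$ for a suitable $\theta\in(0,1)$, and combining the two inequalities gives $\|P\|_p\leq C'\|P\|_1$ on such polynomials. Passing to the closure then shows the two norms are equivalent on the linear span of the matrix coefficients of representations in $E$, which is dense in $L^1_E(G)$; hence $L^1_E(G)=L^p_E(G)$ with equivalent norms.

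For the Fourier-algebra case, the argument is parallel but takes place in the noncommutative setting. The canonical trace on $VN(\Gamma)$ is finite (since $\tau(I)=\|\delta_e\|_{\ell^2}^2=1$), so the noncommutative H\"older inequality (see \cite{pisixu03}) gives $L^p(\Gamma)\subseteq L^1(\Gamma)$, and thus $L^p_E(\Gamma)\subseteq L^1_E(\Gamma)$. The $\Lambda(p)$ definition supplies $\|f\|_{\lpg}\leq C\|f\|_{\lqg}$ for linear combinations $f$ of translation operators $\delta_\gamma$, $\gamma\in E$; combined with the noncommutative log-convexity inequality $\|f\|_{\lqg}\leq \|f\|_{\lpg}^\theta\|f\|_{L^1(\Gamma)}^{1-\theta}$ this yields $\|f\|_{\lpg}\leq C'\|f\|_{L^1(\Gamma)}$ on such $f$. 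Density of the span of $\{\delta_\gamma:\gamma\in E\}$ in $L^1_E(\Gamma)\cong A_E(\Gamma)$ extends the norm equivalence, so $A_E(\Gamma)$ is topologically isomorphic to the closed subspace $L^p_E(\Gamma)$ of the reflexive space $L^p(\Gamma)$, and is therefore reflexive.

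The only delicate point is the interpolation/density step in the noncommutative setting; classically (for compact $G$) it is exactly Theorem 37.7 of \cite{hewiross2}, and the noncommutative version requires invoking noncommutative H\"older and the fact (from \cite{kanilaubook}) that finite linear combinations of $\{\delta_\gamma:\gamma\in E\}$ are dense in $A_E(\Gamma)$. Everything else is bookkeeping.
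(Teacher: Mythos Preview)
Your proposal is correct and follows exactly the approach the paper indicates: the paper's ``proof'' is the paragraph preceding the theorem, which says that the identity $L^p_E=L^1_E$ (in both the classical and noncommutative settings) is a consequence of H\"older's inequality, citing \cite[Theorem 37.7]{hewiross2} and \cite{pisixu03}, and then reflexivity is immediate. You have simply spelled out the interpolation/log-convexity step and the density argument that the paper leaves implicit.
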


  In the case of compact  groups, this can be pushed to a characterization. Although the paper \cite{hare88} is on Abelian groups, the proof actually works for all compact groups.%
%
%We now can link these sets with reflexivity of $L_E^1(G)$ and $A_E(G)$. Hare  proved  in \cite{hare88} that  a $\Lambda(p)$-set in a discrete Abelian group, $p>0$ is  always a $\Lambda(p+\varepsilon)$-set for some $\varepsilon>0$. Bachelis and Ebestein \cite{bacheebe74} had proved the same earlier  for $p>1$. The proof still works  when $G$ is noncommutative and $E\subseteq \widehat{G}$. This readily implies that $L^1_E(G)$ is reflexive if $E$ is a $\Lambda(1)$-set. The converse is proved in Corollary of \cite{hare88}. The proof again works for noncommutative groups. This latter property is the one useful to us, both together  yield:

\begin{theorem}[Hare, \cite{hare88}]\label{lambda1=ref,l1}
If $E\subset \widehat{G}$, $G$ a compact group, then  $L_E^1(G)$ is reflexive if and only if $E$ is a $\Lambda(1)$-set.
\end{theorem}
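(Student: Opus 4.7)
My plan is to route the equivalence through the intermediate characterization that $L_E^1(G)$ is reflexive if and only if $E$ is a $\Lambda(p)$-set for \emph{some} $p>1$. This intermediate condition visibly implies $\Lambda(1)$ (by the remark in the paper that $\Lambda(p)\Rightarrow \Lambda(q)$ for every $0<q<p$, specialised to $q=1$) and, conversely, implies reflexivity by Theorem \ref{lambda1=ref,ag}. Thus the whole theorem reduces to showing that each of the two endpoints---reflexivity and $\Lambda(1)$---is equivalent to $\Lambda(p)$ for some $p>1$.

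For the direction ``reflexive $\Rightarrow$ $\Lambda(p)$ for some $p>1$'', I would argue as follows. Reflexivity of $L_E^1(G)$ means its unit ball is weakly compact; by the Dunford--Pettis criterion on the probability space $(G,m_G)$, the unit ball is uniformly integrable. A Kadec--Pe\l czy\'nski-type argument specialised to the translation-invariant subspace $L^1_E(G)$ of $L^1(G)$ then yields an isomorphic embedding $L^1_E(G)\hookrightarrow L^p(G)$ for some $p>1$; concretely, $\norm{f}_p\le C\norm{f}_1$ for every $f\in L^1_E(G)$. This is a form of $\Lambda(p)$-ness, via the standard bootstrap showing that the defining inequality for $\Lambda(p)$ can equivalently be taken with $q=1$.

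For the direction ``$\Lambda(1)\Rightarrow$ $\Lambda(p)$ for some $p>1$''---the genuinely deep content---I would instead deduce reflexivity of $L^1_E(G)$ directly from $\Lambda(1)$ and then invoke the previous paragraph. The key tool is Rosenthal's $\ell^1$-theorem: every bounded sequence in a Banach space either has a weakly Cauchy subsequence or contains an $\ell^1$-basis. A normalised $\ell^1$-basis $(f_n)$ in $L^1_E(G)$ would, after a disjointification/Pe\l czy\'nski-type extraction, behave essentially as a sequence of almost disjointly supported functions of $L^1$-norm $1$; such a sequence must have $L^q$-quasinorm tending to zero for any $q<1$, which contradicts the $\Lambda(1)$-inequality $\norm{f_n}_1 \le C \norm{f_n}_q$. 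Thus every bounded sequence in $L^1_E(G)$ has a weakly Cauchy subsequence, which by weak sequential completeness of $L^1(G)$ converges weakly to an element of the closed subspace $L^1_E(G)$, establishing weak compactness of the unit ball.

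The main obstacle will be the two ``subspaces of $L^1$'' steps: the Kadec--Pe\l czy\'nski upgrade from uniform integrability to an honest $L^p$-embedding with $p>1$ and, dually, the disjointification argument required to rule out $\ell^1$-copies by means of the $L^q$-quasinorm with $q<1$. Both arguments exploit the translation-invariance of $L^1_E(G)$ in an essential way (typically through convolution with a suitable approximate identity to produce disjoint or nearly disjoint supports), and this is precisely the ingredient that distinguishes the present setting from the general theory of subspaces of $L^1$ and that allows Hare's result to go through for arbitrary compact groups, not only the Abelian ones.
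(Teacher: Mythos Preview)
The paper does not prove this theorem; it simply attributes it to Hare \cite{hare88} and remarks that Hare's argument, written for Abelian $G$, extends to arbitrary compact groups. So there is no ``paper's proof'' to compare against beyond that reference. That said, your outline does not reproduce Hare's argument and contains real gaps in both directions.

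Your step ``reflexive $\Rightarrow$ $\Lambda(p)$ for some $p>1$'' is the serious one. Uniform integrability together with a Kadec--Pe\l czy\'nski/Rosenthal dichotomy gives at most that $L^1_E(G)$ is \emph{isomorphic as a Banach space} to a subspace of some $L^p$, $p>1$; it does \emph{not} give the inequality $\norm{f}_p\le C\norm{f}_1$ on $L^1_E(G)$, which is what $\Lambda(p)$ means. The passage from an abstract $L^p$-embedding to equivalence of the $L^p$- and $L^1$-norms on the \emph{given} subspace is exactly where translation invariance must be used, and it is the substance of Hare's self-improvement theorem ($\Lambda(q)$ for some $0<q<2$ $\Rightarrow$ $\Lambda(p)$ for all $p<2$), proved by an iterative convolution argument, not by general Banach-space dichotomies. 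Your disjointification argument for ``$\Lambda(1)\Rightarrow$ no $\ell^1$-copy'' also breaks: the subsequence splitting $f_{n_k}=g_k+h_k$ with $h_k$ disjointly supported and $g_k$ uniformly integrable does not force $\norm{f_{n_k}}_q\to 0$, since the $g_k$ may carry all the $L^q$-mass (take $g_k$ equal to a fixed nonzero function). So the $\Lambda(1)$ inequality is never contradicted.

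What actually works is the opposite allocation of difficulty. The implication ``reflexive $\Rightarrow\Lambda(1)$'' is elementary: if $E$ is not $\Lambda(1)$, pick trigonometric polynomials $P_n$ on $E$ with $\norm{P_n}_1=1$ and $\norm{P_n}_q\to 0$ for some $q<1$; then $P_n\to 0$ in measure, and uniform integrability (equivalently, reflexivity) would force $\norm{P_n}_1\to 0$ via Vitali, a contradiction. The hard implication ``$\Lambda(1)\Rightarrow$ reflexive'' goes through Hare's bootstrapping $\Lambda(1)\Rightarrow\Lambda(p)$ for $p<2$, after which $L^1_E(G)=L^p_E(G)$ is reflexive. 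That bootstrapping is a genuine harmonic-analysis argument exploiting convolution; there is no soft route to it along the lines you sketch.
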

% In that case the ideals of $L^1(G)$ that are reflexive have been characterized (in general, such a characterization is not available in the literature but examples abound).
% By a Theorem of Bachelis and Ebenstein \cite{bachebe74} (but see the clarifying
% improvement by Hare \cite{hare88}) every $\Lambda(p)$-set, $0<p<2$,  is also a $\Lambda(p+\varepsilon)$-set for some $\varepsilon>0$. Hence, if $E$ is a $\Lambda(1)$-set, then $L_E^1(G)=L_E^{1+\varepsil  on}(G)$ set for some $\varepsilon>0$ and $L_E^1(G)$ is reflexive.
%The converse being also true, \cite{hare88}, one has that $L_E^1(G)$ is reflexive if and only if $E\subseteq \widehat{G}$ is a $\Lambda(1)$-set.

While not  every algebra in \wasabi contains a reflexive  infinite dimensional ideal, see Theorem \ref{tall} below, many of them indeed do. A theorem of Picardello  to the effect that every discrete group contains an infinite $\Lambda(4)$-set can be used, along with  Theorem \ref{lambda1=ref,ag},  to see that Fourier algebras of discrete groups are among them.

\begin{theorem}[Theorem 1 of \cite{pica73}, see also \cite{boze73}]
Every discrete group $\Gamma$ contains an infinite subset $E$ such that $A_E(\Gamma)$ is reflexive.
\end{theorem}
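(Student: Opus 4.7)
The plan is to combine Picardello's existence theorem for infinite $\Lambda(4)$-sets in an arbitrary discrete group with Theorem \ref{lambda1=ref,ag}, which links $\Lambda(p)$-sets ($p>1$) to reflexivity of the corresponding ideal in the Fourier algebra.

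First, I would invoke Theorem 1 of \cite{pica73} (see also \cite{boze73}) to produce an infinite subset $E \subseteq \Gamma$ which is a $\Lambda(4)$-set. Unpacking the definition adopted here for discrete groups, this furnishes constants $1 < q < 4$ and $C>0$ such that
\[
\norm{f}_{L^4(\Gamma)} \leq C\,\norm{f}_{L^q(\Gamma)}
\]
for every $f$ in the linear span of the translation operators $\{\delta_\gamma : \gamma \in E\}$.

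Second, since $p=4$ certainly satisfies $p>1$, Theorem \ref{lambda1=ref,ag} applies directly and yields that $A_E(\Gamma)$ is reflexive. (In effect, $L^4_E(\Gamma) = L^1_E(\Gamma) \cong A_E(\Gamma)$ via the H\"older-type collapse discussed above the theorem, and $L^4_E(\Gamma)$ is a closed subspace of the reflexive space $L^4(\Gamma)$.)

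The substantive difficulty is not the deduction — which is two lines once Theorem \ref{lambda1=ref,ag} is in hand — but Picardello's construction itself. That construction uses noncommutative Khintchine-type inequalities and a careful combinatorial selection of group elements that are sufficiently ``free'' to support the required $L^p$--$L^q$ comparison. Since this work is done in \cite{pica73,boze73}, the natural route here is simply to cite it rather than reconstruct it. I would, however, emphasize in the write-up that, for the Abelian case $\Gamma = \widehat{G}$, the classical Rudin construction from \cite{rudin60} produces such a set and already suffices.
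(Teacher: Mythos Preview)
Your proposal is correct and matches the paper's approach exactly: the paper does not supply a formal proof but states, in the sentence immediately preceding the theorem, that Picardello's result on the existence of infinite $\Lambda(4)$-sets combined with Theorem~\ref{lambda1=ref,ag} yields the conclusion. Your two-step write-up is precisely this.
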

In the case of $L^1(G)$ the situation is more complicated. Denote by $\widehat{G}_n$ the set of (equivalences classes of)  irreducible unitary representations of dimension $n$. We say that $G$ is \emph{tall} if $\widehat{G}_n$ is finite for every $n$.
\begin{theorem}\label{tall}
  Let $G$ be a compact group. If $G$ is not tall then $\widehat{G}$ contains an infinite subset $E$ such that $L_E^1(G)$ is reflexive.
  %A compact Lie group contains such an infinite subset if and only $G$ is tall. In particular, if $G$ is a simply connected simple Lie group, a reflexive ideal of $L^1(G)$ is necessarily finite dimensional.
  If $G$ is a compact semisimple Lie group, then a closed ideal of $L^1(G)$ is reflexive if and only if it has finite dimension.
\end{theorem}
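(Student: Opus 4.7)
My plan is to reduce both assertions to Hare's characterization (Theorem \ref{lambda1=ref,l1}): $L_E^1(G)$ is reflexive if and only if $E$ is a $\Lambda(1)$-set. The task then becomes producing an infinite $\Lambda(1)$-set in $\widehat{G}$ when $G$ is not tall, and excluding such sets when $G$ is a compact semisimple Lie group.

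For the first assertion, suppose $G$ is not tall and fix $n$ with $\widehat{G}_n$ infinite. The uniform dimension of the representations in $\widehat{G}_n$ makes this case tractable: the Peter-Weyl decomposition identifies each block $L^2_\pi(G)$ with $M_n(\C)$ (for $\pi\in\widehat{G}_n$), and the orthogonality relations reduce the comparison of $L^p$-norms of a trigonometric polynomial $P$ supported in $\widehat{G}_n$ to uniform estimates on the Fourier coefficients $\widehat{P}(\pi)$. Inside this homogeneous family I would extract an infinite subset $E \subseteq \widehat{G}_n$ satisfying a reverse Hölder inequality $\|P\|_1 \leq C \|P\|_q$ for some $q < 1$ and all $P$ supported on $E$. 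Such infinite subsets are known to exist once $\widehat{G}_n$ is infinite: they correspond to local (central) Sidon sets constructed for non-tall compact groups via random-selection arguments in the spirit of Rudin and refined by Parker, Cecchini and Hare. Being Sidon, they are $\Lambda(p)$ for every $p>0$, in particular $\Lambda(1)$, and Theorem \ref{lambda1=ref,l1} yields the reflexivity of $L_E^1(G)$.

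For the second assertion, finite-dimensional closed ideals are trivially reflexive. For the converse, let $L_E^1(G)$ be a reflexive closed ideal where $G$ is a compact semisimple Lie group; Theorem \ref{lambda1=ref,l1} gives that $E$ is $\Lambda(1)$, and I would argue that $E$ must be finite. The Weyl dimension formula guarantees that $G$ is tall (the dimension of the irreducible representation with highest weight $\lambda$ grows polynomially in $\lambda$, so each $\widehat{G}_n$ is finite), but tallness alone is not enough to rule out infinite $\Lambda(1)$-subsets of $\widehat{G}$. I would instead appeal to the deeper theorem, originally due to Rider and subsequently refined by Cecchini and Hare, to the effect that the dual of a compact connected semisimple Lie group admits no infinite $\Lambda(p)$-set for any $p>0$. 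This forces $E$ to be finite and $L_E^1(G)$ to be finite-dimensional.

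The main obstacle lies in the second assertion: the absence of infinite $\Lambda(1)$-sets in the duals of semisimple compact Lie groups is a deep structural fact resting on the combinatorics of highest weights and the interaction between Weyl character estimates and $L^p$-norms, which I would cite rather than reprove. In the first assertion, the key technical step is the construction (or invocation) of a local Sidon set inside $\widehat{G}_n$; this is a well-established piece of harmonic analysis on compact groups, and the homogeneity of representations of fixed dimension is precisely what makes the selection argument work.
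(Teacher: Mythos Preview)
Your approach is correct and essentially the same as the paper's: both parts are reduced to external results about $\Lambda(p)$-sets combined with Hare's characterization (Theorem \ref{lambda1=ref,l1}). The only difference is attribution: for the first assertion the paper cites Hutchinson \cite{hutc77} directly (non-tall compact groups admit infinite Sidon sets), and for the second it cites Giulini--Travaglini \cite{giultrav80} (compact semisimple Lie groups admit no infinite $\Lambda(q)$-set for any $q>0$), rather than the authors you mention.
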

\begin{proof}
  The first statement is proved in \cite[Corollary 2.5]{hutc77}.
   %Ceccini \cite[Theorem 3]{cecc72}

 Giulini and Travaglini prove in \cite[Corollary]{giultrav80} that compact semisimple Lie groups do not contain infinite  $\Lambda(q)$-sets for any $q>0$. We then  apply  Theorem  \ref{lambda1=ref,l1}.
%Rider \cite{rider75BUMI} proves that $SU(n)$ contains no infinite $\Lambda(p)$-set, $p>1$.    The second statement follows from this, Lemma \ref{1+ep} and
\end{proof}

\subsection{Arens-regular ideals that are not reflexive}
 Proposition \ref{prop1} shows that Riesz ideals of  \wasabi  algebras are Arens regular. An ideal $L_E^1(G)$ in $L^1(G)$ with $E\subseteq\widehat{G}$, is a Riesz ideal precisely when  $M_E(G)=L_E^1(G)$,  i.e., when $E$ is a \emph{Riesz} set.  The same can be said about ideals $A_E(\Gamma)$ in $A(\Gamma)$, cf. Section  \ref{special}

   %An easy way to obtain regular  When  a subset $E$ of the dual $\widehat{G}$ of a compact Abelian group $G$ is 'thin' enough, then $E$ is %a  Riesz set. But  thin sets are often  $\Lambda(1)$ so that  $L_E^1(G)$  is regular but also  reflexive. This notwithstanding,
%
  The eponymous theorem of the Riesz brothers at the origin of the term Riesz set,  shows that $L^1_\N(\T)$ is an  Arens regular ideal of $L^1(\T)$ and it is certainly  not reflexive. The literature contains several constructions of Riesz sets that are inspired by this classical result but  which do not part much from its original  spirit, see e.g., \cite{bochner,Mac}  and the survey paper \cite{koshi01}.
   Blei, see, e.g., \cite{blei75dio} constructs Riesz sets of a  different sort, they are, actually, sets with even stronger interpolation properties  that are not  $\Lambda(p)$ for any $p$. His examples, however, cannot be constructed in every Abelian group. We present here a rather general construction, partly based on Blei's,  that yields examples of Riesz sets  $E\subseteq \Gamma$, and hence  of  Arens regular ideals in $A_E(\Gamma)$, $\Gamma$ not necessarily commutative, so that $A_E(\Gamma)$ is not reflexive. When put together with known results, this will be used to show  that every Abelian discrete group contains a Riesz set (even a Rosenthal set) that is not $\Lambda(1)$.

 We begin with a few preliminaries, starting with the following Lemma
 that basically follows from \cite[Lemma 1.1]{gode88}. Recall (see \cite{kanilauschl03} or \cite{miao99}) that $B(\Gamma)=A(\Gamma) \oplus B_s(\Gamma)$ where $B_s(\Gamma)$ is a closed translation invariant subspace of $B(\Gamma)$. If $\phi=\phi_a+\phi_s\in B(\Gamma)$, then
     \[\norm{\phi}=\norm{\phi_a}+\norm{\phi_s}.\]
     This is the Lebesgue decomposition in $B(\Gamma)$.

%   \begin{lemma}\label{point}
%Then  either  $E$ is Riesz or there is $x\in E$ such that, putting $E_x=E\setminus\{x\}$,
%    there is $\phi \in B_{E_x}(\Gamma)$ with $\phi_s (x)\neq 0$.
%    %\notin $B_{E_x}(\Gamma)$.
%        \end{lemma}
%
%\begin{proof}											
%    If $E$ is not Riesz,
% there is  $\phi \in B_E(\Gamma)$ with  $\phi_s\neq 0$. We  pick $x\in G$ such that $\phi_s(x)\neq 0$ and define $\psi_x:=\phi-\phi(x)\delta_x\in B_{E_x}(\Gamma)$. If  $(\psi_x)_s(x)= 0$,  the equality $(\psi_x)_s=\phi_s$ yields the contradictory equality $0\neq \phi_s(x)=(\psi_x)_s(x)=0$.
%        \end{proof}
%				
%		{\tt The lemma and proof re-stated}	

	\begin{lemma}\label{point}
Let $\Gamma $ be a discrete group and $E\subseteq \Gamma$. Then either $E$ is a Riesz set or there exist $x\in E$
and
$\phi\in B_E(\Gamma)$ such that $\phi(x)=0$ and $\phi_s(x)\ne 0.$
%Then  either  $E$ is Riesz or there is $x\in E$ such that, putting $E_x=E\setminus\{x\}$,
%    there is $\phi \in B_{E_x}(\Gamma)$ with $\phi_s (x)\neq 0$.
    %\notin $B_{E_x}(\Gamma)$.
        \end{lemma}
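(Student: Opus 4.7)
Plan. I would prove the contrapositive: assuming $E$ is not a Riesz set, I will construct $x\in E$ and $\phi\in B_E(\Gamma)$ with $\phi(x)=0$ and $\phi_s(x)\ne 0$. Since $E$ is not Riesz, $A_E(\Gamma)\subsetneq B_E(\Gamma)$, so some $\phi_0\in B_E(\Gamma)\setminus A(\Gamma)$ has $(\phi_0)_s\ne 0$.

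The whole problem then reduces to locating a point $x\in E$ at which $(\phi_0)_s(x)\ne 0$, possibly after a preliminary modification of $\phi_0$. Once such $x$ is available, the construction of $\phi$ is immediate: for every $y\in\Gamma$ the Dirac function $\delta_y$ can be realized as the matrix coefficient $s\mapsto\langle\lambda(s)\delta_e,\delta_y\rangle$ of the left regular representation against $\ell^2(\Gamma)$-vectors, so $\delta_y\in A(\Gamma)$; in particular $\delta_x\in A_E(\Gamma)\subseteq B_E(\Gamma)$ when $x\in E$. Setting
\[
\phi\;:=\;\phi_0-\phi_0(x)\,\delta_x,
\]
we get $\phi\in B_E(\Gamma)$ with $\phi(x)=0$. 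Because $\delta_x$ lies in $A(\Gamma)$, its subtraction contributes nothing to the singular part, so $\phi_s=(\phi_0)_s$ and hence $\phi_s(x)=(\phi_0)_s(x)\ne 0$, as required.

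The main obstacle, and the place where \cite[Lemma~1.1]{gode88} enters, is precisely the existence of such an $x\in E$. To argue by contradiction, suppose $\phi_s|_E=0$ for every $\phi\in B_E(\Gamma)$. Then combining $\phi=\phi_a+\phi_s$ with $\phi|_{\Gamma\setminus E}=0$ forces $\phi_a=-\phi_s$ on $\Gamma\setminus E$, so any nonzero candidate $\phi_s$ would be a nonzero element of $B_s(\Gamma)$ supported in $\Gamma\setminus E$. The plan is then to exploit the translation invariance of both summands of the Lebesgue decomposition $B(\Gamma)=A(\Gamma)\oplus B_s(\Gamma)$ together with the inclusion $A(\Gamma)\subseteq c_0(\Gamma)$, valid because $\Gamma$ is discrete, to obtain via Godefroy's separation argument the identity $\phi_s=0$; this contradicts the non-Riesz assumption and rules out the bad case, producing the desired $x\in E$. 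The delicate point, where most of the work sits, is reproducing Godefroy's adaptation of the Lebesgue-decomposition argument in the possibly non-abelian discrete setting, ensuring that the translation-invariant structural properties of $B_s(\Gamma)$ used in the original abelian proof remain available.
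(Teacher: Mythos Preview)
Your proposal is correct and follows essentially the same route as the paper: pick $\psi\in B_E(\Gamma)$ with $\psi_s\neq 0$, locate $x\in E$ with $\psi_s(x)\neq 0$, and set $\phi=\psi-\psi(x)\delta_x$, so that $\phi(x)=0$ while $\phi_s=\psi_s$ gives $\phi_s(x)\neq 0$. The paper's proof is terser---it simply asserts ``we pick $x\in E$ such that $\psi_s(x)\neq 0$'' and relies on the prefatory remark that the lemma ``basically follows from \cite[Lemma 1.1]{gode88}''---whereas you correctly isolate this as the only nontrivial step and sketch how Godefroy's argument (translation invariance of the Lebesgue decomposition together with $A(\Gamma)\subseteq c_0(\Gamma)$) would carry over to the discrete non-abelian setting.
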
	
				
                \begin{proof}  If $E$ is not Riesz,
 there is  $\psi \in B_E(\Gamma)$ with  $\psi_s\neq 0$. We  pick $x\in E$ such that $\psi_s(x)\neq 0$ and define
$\phi:=\psi-\psi(x)\delta_x\in B_{E}(\Gamma)$. Clearly, $\phi(x)=0.$
If  $\phi_s(x)= 0$,  the equality $\phi_s=\psi_s$
yields the contradictory equality $0\neq \psi_s(x)=\phi_s(x)=0$.
\end{proof}

%\begin{lemma}[Lemma 3.1 of \cite{lauwong13}]\label{normu}
%Let $G$ be a locally compact group. Let $x\in G$, $U$ a neighbourhood of $x$ and $\varepsilon>0$. There is then another neighbourhood $V$ of $x$ with $V\subseteq U$ and $u\in A(G)$ such that
%\[ \restr{u}{V}=1, \quad  \restr{u}{G\setminus U}=0  \quad \mbox{ and } \quad \norm{u}\leq 1+\varepsilon.\]
%\end{lemma}
        The following definition marks the target of our construction.
				
        \begin{definition}\label{def:ce}
   Let $\Gamma$ be  a discrete group and $K$ be  a locally  compact group. We will say that a countable set $E\subset \Gamma$ is \emph{ convergently   embedded} in $K$ if there is a group isomorphism $\alpha \colon\<{E} \to K$ with dense range  so that $ \alpha(E)$ is relatively compact and has exactly only one limit point  $p\in K$, $p\notin \alpha(E)$.
   %\footnote{An alternative definition would be \emph{We will say that a countable set $E\subset \Gamma$ is \emph{ convergently   embedded} if $E$ is a convergent sequence in a locally precompact topology on $G$.} Which one is better in this context?}
   \end{definition}
When $\Gamma $ is Abelian, Blei  \cite{blei75dio} proves that every set $E$ that is convergently embedded in some compact Abelian group $K$ is a \emph{Rosenthal set}, i.e., every essentially bounded function on $G$ with Fourier transform supported on $E$ is equal almost everywhere to a continuous function (that is, $L^\infty_E(G)=C_E(G)$), see \cite[Theorem B]{blei75dio}. Rosenthal sets are known to be Riesz sets, \cite{drespigno74,li93}. We now set to extend this to the Fourier algebra of non-Abelian groups and  see that such sets can be found in a wide class of groups.

   \begin{theorem}\label{ceimplRiesz}
  Let $\Gamma$ be  a discrete group. If $E\subset \Gamma$ is convergently embedded in $K$ for some locally compact group $K$, then $E$ is a Riesz set.
   \end{theorem}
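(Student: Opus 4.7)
I argue by contradiction and invoke Lemma~\ref{point}. Assuming $E$ is not Riesz, pick $x\in E$ and $\phi\in B_E(\Gamma)$ with $\phi(x)=0$ and $\phi_s(x)\neq 0$. Convergent embedding behaves well under left translation within $\langle E\rangle$: replacing $E$ by $x^{-1}E$, $\alpha$ by $y\mapsto \alpha(x)^{-1}\alpha(y)$, $p$ by $\alpha(x)^{-1}p$, and $\phi$ by its left translate $L_x\phi$ (whose Lebesgue decomposition is the translate of the original), I may assume $x=e_\Gamma$. Thus $\phi(e)=0$, $\phi_s(e)\neq 0$, and consequently $\phi_a(e)\neq 0$.

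Because $\alpha(E)$ is relatively compact in $K$ with $p\notin\alpha(E)$ as its unique limit point, I extract a sequence $(y_n)\subset E\setminus\{e\}$ of pairwise distinct elements with $\alpha(y_n)\to p$; by injectivity of $\alpha$ the $y_n$ are distinct in $\Gamma$, hence $y_n\to\infty$ in the discrete topology. Since $\phi_a\in A(\Gamma)\subset c_0(\Gamma)$, this gives $\phi_a(y_n)\to 0$ and so $\frac{1}{N}\sum_{n=1}^{N}\phi_a(y_n)\to 0$.

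The heart of the proof is to show the companion vanishing of the Cesàro averages $\frac{1}{N}\sum_{n=1}^{N}\phi(y_n)$. The group homomorphism $\alpha\colon \langle E\rangle \to K$ induces a contractive $\ast$-homomorphism $\alpha^\ast\colon B(K)\to B(\langle E\rangle)$, $\psi\mapsto \psi\circ\alpha$, and the restriction map $B(\Gamma)\to B(\langle E\rangle)$ is surjective. Because elements of $B(K)$ are continuous on $K$, the weak-$\ast$ convergence $\delta_{\alpha(y_n)}\to\delta_p$ in $C^\ast(K)^\ast$ yields $\psi(\alpha(y_n))\to\psi(p)$ for every $\psi\in B(K)$. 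A density/approximation argument, exploiting that $\alpha(\langle E\rangle)$ is dense in $K$ and that $\alpha(E)\cup\{p\}$ is compact, should extend this to all of $B_E(\Gamma)$, allowing me to identify the Cesàro limit with an ``evaluation at $p$''. Since $\phi\in B_E(\Gamma)$ is supported on $E$ while $p\notin\alpha(E)$, this limit must be $0$; combined with the previous paragraph, $\frac{1}{N}\sum_{n=1}^{N}\phi_s(y_n)\to 0$.

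To close, I apply the same machinery to the left translates $L_{y_n^{-1}}\phi$, each of which lies in $B_{y_nE}(\Gamma)$ with $y_nE$ convergently embedded (limit point $\alpha(y_n)p$). A suitable averaging identifies $\phi_s(e)$ as a Cesàro limit of the type handled above, forcing $\phi_s(e)=0$ and contradicting our choice of $\phi$. The principal technical obstacle is the transfer step in paragraph three: a generic $\phi\in B_E(\Gamma)$ does not factor through $K$ via $\alpha^\ast$, so carrying the weak-$\ast$ convergence in $M(K)$ over to a Cesàro statement about $\phi(y_n)$ demands careful use of the density of $\alpha(\langle E\rangle)$ in $K$ together with the support condition $\operatorname{supp}\phi\subseteq E$ to discard the contribution coming from outside $\alpha^\ast(B(K))$.
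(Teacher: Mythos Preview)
Your proposal has a genuine gap at the ``transfer step'' you yourself flag in paragraph three, and the closing paragraph does not repair it. The difficulty is real: an element $\phi\in B_E(\Gamma)$ need not arise as $\psi\circ\alpha$ for any $\psi\in B(K)$, so convergence $\alpha(y_n)\to p$ in $K$ tells you nothing about the individual values $\phi(y_n)$, and Ces\`aro averaging does not help. Concretely, with $\Gamma=\Z$, $K=\T$, $\alpha(n)=e^{2\pi i n\theta}$ for irrational $\theta$, a function $\phi\in B(\Z)$ is a Fourier--Stieltjes transform $\widehat{\mu}$ of a measure on $\T$, and there is no mechanism forcing $\widehat{\mu}(y_n)$ to converge (or Ces\`aro-converge) just because $e^{2\pi i y_n\theta}$ does. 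Your density argument would require approximating $\phi$ in $B(\Gamma)$-norm by elements of $\alpha^\ast(B(K))$, but the latter are continuous for the $K$-topology on $\langle E\rangle$ and a singular $\phi_s$ typically is not close to any such function. The final paragraph compounds the problem: you assert that ``a suitable averaging identifies $\phi_s(e)$ as a Ces\`aro limit of the type handled above'', but no concrete identity of this form is proposed, and it is unclear what sequence of values of translates of $\phi$ would recover the single number $\phi_s(e)$.

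The paper's argument avoids evaluating $\phi$ pointwise along a sequence altogether. It instead pulls back from $A(K)$ a sequence of functions $u_n$ that are $1$ near $p$ and $0$ off a fixed neighbourhood avoiding $\alpha(x_0)$, with $\|u_n\|\le 1+\tfrac{1}{n}$. Since all but finitely many points of $\alpha(E)$ lie where $u_n=1$, after subtracting a finite-rank piece one has $\phi_n(1-\underline{u_n})=0$. The Lebesgue decomposition, together with the identity $\|\psi\|=\|\psi_a\|+\|\psi_s\|$, then forces $\|\phi_s(1-\underline{u_n})\|\le\tfrac{1}{n}\|\phi_s\|$, contradicting $|\phi_s(x_0)(1-\underline{u_n}(x_0))|=|\phi_s(x_0)|>0$. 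The key maneuver you are missing is to exploit the multiplicative action of $\alpha^\ast(A(K))$ on $B(\Gamma)$ rather than try to control pointwise values of $\phi$.
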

  \begin{proof}
  We will let $p$ denote the only limit point of $\alpha(E)$ in $K$.

   Assume $E$ is not Riesz. By Lemma \ref{point}, there is $x_0\in E$ and  $\phi\in B_E(\Gamma)$ such that $\phi(x_0)=0$ and  $\phi_s(x_0)\neq 0$. Let $U\subseteq K$ be a
    neighbourhood of $p$ with $\alpha(x_0)\notin U$.

    By Lemma 3.1 of \cite{lauwong13}, there are, for each $n\in \N$, a neighbourhood $V_n$ of $p$ and
    $u_n\in A(K)$ such that
   \[ \restr{u_n}{V_n}=1, \quad  \restr{u_n}{G\setminus U}=0  \quad \mbox{ and } \quad \norm{u_n}\leq 1+\frac1n.\]
   Since  $\overline{\alpha(E)}$ is compact and $p$ is the only limit point of $\alpha(E)$, $\alpha(E)\setminus V_n$ has to be finite. Let $F_n$ be the inverse image of this set under $\alpha$.  $F_n$ is therefore a finite subset of $E$ with $\alpha(F_n)=\alpha(E)\setminus V_n$. Define next
   \[\phi_n:= \phi-\sum_{x\in F_n} \phi(x)\delta_x\in B(\Gamma).\]
   Then $\supp \phi_n \subseteq E\cap  \alpha^{-1}(V_n)$.  Define now $\underline{u_n}$ to be $ u_n\circ \alpha$ on $\<{E}$ and 0 elsewhere. Then $\underline{u_n}\in  B(\Gamma)$ , and  we have
   \begin{equation}\label{0} \phi_n  (1-\underline{u_n})=0.\end{equation}
   Observe as well that, by  \cite[ Theorem 2.2.1 (ii)]{kanilaubook}
   \[ \norm{\underline{u_n}}=\norm{u_n}\leq 1+\frac1n.\]

   Let for each $n$, $\phi_n=\left(\phi_n\right)_a+\left(\phi_n\right)_s$ be its Lebesgue decomposition. Since $\sum_{x\in F_n} \phi(x)\delta_x\in A(\Gamma)$, we know that
   $\left(\phi_n\right)_s=\phi_s$.

   From \eqref{0}, we deduce then that
  \begin{equation}\label{ag}\phi_s(1- \underline{u_n})=-\left(\phi_n\right)_a (1- \underline{u_n})\in A(\Gamma).\end{equation}
  We then deduce  that, since $\phi_s  \underline{u_n}=\phi_s-\phi_s(1-\underline{u_n})$, the first summand being singular and the second being in $A(\Gamma)$,
  %{\tt Do we need an approximate identity in $A(\Gamma)$??}
  \[\norm{\phi_s \underline{u_n}}= \norm{\phi_s}+\norm{\phi_s(1-\underline{u_n})}.\]
  Thence,
  \begin{align*}
    \norm{\phi_s  (1-\underline{u_n})}&=\norm{\phi_s  \underline{u_n}}- \norm{\phi_s}\\
    &\leq \norm{\phi_s}  \left(1+\frac1n\right)-\norm{\phi_s}\\&=\frac{1}{n}\norm{\phi_s}.
  \end{align*}
  Now,
   \[\norm{\phi_s  \left(1-\underline{u_n}\right)}\geq  \norm{\phi_s(1-\underline{u_n})}_\infty\geq \left|  \phi_s(x_0)\right|.\]
   By taking $n$ large enough so that $\left|\phi_s(x_0)\right|>\dfrac{1}{n}\norm{\phi_s}$,    the desired contradiction is reached.
   %There are  then $\phi\in A_E(G)$ and $x_0\in \Gamma$ such that
  \end{proof}
  We will need the following result that follows from well-known estimates.

  \begin{lemma}
    \label{cyclic}
    Let $\Gamma$ be a cyclic group of order $N$ ($1\leq N\leq \infty$) with character group  $G$ and  let $\gamma \in \Gamma$ be its generator.
Consider for each $n\in \N$, $u_n=\sum_{j=0}^{n-1} \delta_{\gamma^j}\in A(\Gamma)$ and its Fourier transform $\widehat{u_n}\colon G \to \T$. For $0<p<1$, the following estimates hold.
\begin{align*}
&(1)\mbox{ If $N=\infty$, } \mbox{there are } C,D_p >0 \mbox{ such that for each } n\in\N, \\&\norm{\widehat{u_n}}_{L^1(G)}\geq C \log n   \mbox{ and }  \norm{\widehat{u_n}}_{L^p(G)}\leq  D_p,  \\
&(2)\mbox{ If  $N<\infty$, }\, \; \norm{\widehat{u_N}}_{L^1(G)}=1 \quad \mbox{ and }\quad \norm{\widehat{u_N}}_{L^p(G)}=N^{1-\frac{1}{p}}.
\end{align*}
%\begin{align*}
%(1)& \lim_{n\to \infty} \frac{\norm{\widehat{u_n}}_{L^p(G)}}{\norm{\widehat{u_n}}_{L^1(G)}}=0, \mbox{ if $G=\T$ }, \\
%(2)&\,  \norm{\widehat{u_N}}_{L^1(G)}=1 \quad \mbox{ and }\quad \norm{\widehat{u_N}}_{L^p(G)}=N^{1-\frac{1}{p}}, \mbox{ if $G=\T_N$}.
%\end{align*}%
%\begin{itemize}
%            \item[--]    If the order of $\Gamma$ is infinite, \[ \lim_{n\to \infty}  \frac{\norm{\widehat{u_n}}_{L^p(G)}}{\norm{\widehat{u_n}}_{L^1(G)}}=0.\]
%            \item[--]  If $N$ is finite,
%            \begin{align*}
%\norm{\widehat{u_N}}_{L^1(G)}&=1, \mbox{ and }\\
%\norm{\widehat{u_N}}_{L^p(G)}&=N^{1-\frac{1}{p}}.
    %\end{align*}
          %\end{itemize}
    %\begin{itemize}
%       \item If the order of $\Gamma$ is infinite, then there are constants $C$ and $D$ such that \begin{align*}
%\norm{\widehat{u_n}}_{L^1(G)}&\geq C \log n, \mbox{ and }\\
%\norm{\widehat{u_n}}_{L^p(G)}&\leq D\left(\frac{1}{1-p}\right)^{\frac{1}{p}}, \mbox{ for any } 0<p<1.
%    \end{align*}
%\item If $\Gamma$ has  finite order $N$
%\begin{align*}
%\norm{\widehat{u_N}}_{L^1(G)}&=1, \mbox{ and }\\
%\norm{\widehat{u_N}}_{L^p(G)}&N^{1-\frac{1}{p}}.
%    \end{align*}
%    \end{itemize}
      \end{lemma}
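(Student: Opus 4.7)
The plan is to identify $\widehat{u_n}$ explicitly with a Dirichlet-type kernel and then invoke standard classical estimates.

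For part (2), the finite case, $\Gamma \simeq \Z/N\Z$ and $G = \widehat{\Gamma}$ is cyclic of order $N$, equipped with normalized Haar measure (i.e.\ each point has mass $1/N$). The computation here is entirely elementary: by orthogonality of the characters, $\widehat{u_N}(\chi) = \sum_{j=0}^{N-1} \chi(\gamma)^j$ equals $N$ when $\chi$ is trivial (since $\chi(\gamma)$ is then $1$) and $0$ otherwise. Hence $\widehat{u_N} = N \cdot \mathbf{1}_{\{1_G\}}$, and direct evaluation gives $\|\widehat{u_N}\|_{L^1(G)} = N\cdot(1/N) = 1$ and $\|\widehat{u_N}\|_{L^p(G)}^p = N^p \cdot (1/N) = N^{p-1}$, yielding $\|\widehat{u_N}\|_{L^p(G)} = N^{1-1/p}$.

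For part (1), we identify $\Gamma$ with $\Z$ (with generator $\gamma$ corresponding to $1$) and $G$ with $\T = \R/\Z$ carrying normalized Haar measure. The characters are $\chi_t(j) = e^{2\pi i j t}$, so summing the geometric progression gives
\[
    \widehat{u_n}(t) = \sum_{j=0}^{n-1} e^{2\pi i j t} = e^{i\pi(n-1)t}\,\frac{\sin(\pi n t)}{\sin(\pi t)},
    \qquad |\widehat{u_n}(t)| = \left|\frac{\sin(\pi n t)}{\sin(\pi t)}\right|.
\]
The lower bound $\|\widehat{u_n}\|_{L^1(\T)} \geq C \log n$ is then the classical lower estimate for the Lebesgue constants of partial Fourier sums; after the change of variable $s = \pi n t$ and the estimate $\sin(\pi t) \leq \pi t$, the integral dominates $(2/\pi) \int_1^{n/2} |\sin(\pi s)|/s\, ds \sim (4/\pi^2)\log n$, from which $C > 0$ is extracted.

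For the upper bound when $0 < p < 1$, the key inputs are the two pointwise bounds $|\widehat{u_n}(t)| \leq n$ (trivial) and $|\widehat{u_n}(t)| \leq 1/(2|t|)$ for $|t| \leq 1/2$ (using $|\sin(\pi t)| \geq 2|t|$ on that range). Splitting the integral at $|t| = 1/n$ one obtains
\[
    \int_0^{1/2} |\widehat{u_n}(t)|^p dt \leq \int_0^{1/n} n^p\, dt + \int_{1/n}^{1/2} (2t)^{-p}\, dt \leq n^{p-1} + \frac{2^{-p}\,(1/2)^{1-p}}{1-p},
\]
which is bounded uniformly in $n$ because $p < 1$ forces both $n^{p-1} \leq 1$ and the $t^{-p}$ integral to be finite. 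The symmetric bound on $(-1/2, 0)$ doubles the constant, yielding the required $D_p$. There is no real mathematical obstacle here beyond keeping conventions (Haar normalization on $\T$ vs.\ on the finite $G$, and the sign convention for the Fourier transform) consistent throughout; the whole argument is a packaging of classical Dirichlet kernel estimates.
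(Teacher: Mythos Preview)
Your proposal is correct and follows essentially the same approach as the paper: identify $\widehat{u_n}$ with the Dirichlet kernel and invoke the classical estimates. The only minor difference is that for the $L^p$ upper bound the paper uses the single pointwise bound $|\widehat{u_n}(t)|\leq 1/|\sin(\pi t)|\leq 2/(\pi t)$ directly (which is already $p$-integrable near $0$ since $p<1$), so your split at $|t|=1/n$ is unnecessary---but harmless.
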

			
	\begin{proof}
We represent $G$ as  the circle group  $\T$ if the order of $\gamma$ is infinite and as $\T_N=\{e^{2\pi i j/N}\colon 1\leq j\leq N\}$ if the order of $\gamma$ is $N<\infty $. Then, in both cases,
\[\left|\widehat{u_n}(e^{2\pi it}) \right|=\left|\sum_{j=1}^n e^{2\pi {i j
t}}\right|=\left|\frac{\sin n\pi t}{\sin \pi t}\right|.\]

%t}}\right|=\left|\frac{\sin \left(2n+1\right)\pi t}{\sin \pi t}\right|.\]

In the case of infinite order, the easy part of a well-known estimate  (see  Exercise II.1.1 in \cite{katz}) can be used to  show that, form some $C>0$,
\begin{equation}\label{cy1}\norm{\widehat{u_n}}_{L^1(\T)}\geq C \log n.\end{equation}
On the other hand, noting that
\[\left|\frac{1}{\sin (\pi t)}\right|\leq \frac{2}{\pi t}, \mbox{ for all $0<t<1$},\]
one has that, for some constant $D>0$,
\begin{equation}\label{cy2}\norm{\widehat{u_n}}_{L^p(G)}\leq D\left(\frac{1}{1-p}\right)^{\frac{1}{p}}.
\end{equation}
When $\Gamma$ is finite, $u_N=\mathbf{1}$,   the constant 1-function and  $\widehat{u_N}=N \delta_1$ (recall that although   $\Gamma$ and $G$ are isomorphic, their Haar measures are different, (normalized  for $G$ and counting measure for $\Gamma$). Hence $\norm{\widehat{u_N}}_{L^1(G)}=1$  and  $
\norm{\widehat{u_N}}_{L^p(G)} =N^{1-\frac{1}{p}}$.
  \end{proof}

  Reflexivity of subspaces of $L^1(G)$ can be characterized through  uniform integrability of its unit ball, see e.g. \cite[Theorem III.C.12]{wojtbook}. This was used in \cite{hare88} to show that $L_E^1(G) $ is not reflexive if $E$ is not a $\Lambda(1)$-set.  In the case of  preduals of von Neumann algebras, such as  $A(\Gamma)$, one can use Lemma \ref{takewc} below as a substitute. This will require some notation.
    Recall  that, in  a von Neumann algebra $\mathcal{M}$, every self adjoint   linear functional $\phi \in \mathcal{M}^\ast$,  can be decomposed as $\phi =(\phi^+ - \phi^-)$, with $\phi^+,\, \phi^-$ positive functionals. If $\phi=V|\phi|$    denotes the polar decomposition of $\phi$,  then
\[ \phi^+=\frac{1}{2}\left(|\phi|+\phi\right), \quad \phi^-=\frac{1}{2}\left(|\phi|-\phi\right)).\]
A general $\phi\in  \mathcal{M}^\ast$ can then be written as $\phi=\phi_1+i \phi_2$ with
$\phi_1=\dfrac{1}{2}(\phi+\phi^\ast)$ and $\phi_2=\dfrac{i}{2}(\phi^\ast-\phi)$, and both $\phi_1$ and $\phi_2$ are self adjoint.
 We denote then
\[ [\phi]=\phi_1^++\phi_1^-+\phi_2^++\phi_2^-.\]

\begin{lemma}[Lemma 5.6 of \cite{take02}]\label{takewc}
Let $\mathcal{M}$ be a von Neumann algebra with predual $\mathcal{M}_\ast$. If $K\subset \mathcal{M}_\ast$ is relatively weakly compact, then for any $\varepsilon>0$, there exist a finite set $F_\varepsilon \subseteq K$ and $\delta>0$ such that if $p$ is a projection in $\mathcal{M}$ and
$\<{[\phi],p}<\delta$ for every $\phi \in F_\varepsilon$, then $\left|\<{\phi,p}\right|<\varepsilon$ for every $\phi \in K$.
\end{lemma}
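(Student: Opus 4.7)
The plan is to argue by contradiction. Suppose some $\varepsilon_0>0$ defeats the conclusion: for every finite $F\subseteq K$ and every $\delta>0$ there is a projection $p\in\mathcal M$ with $\<{[\phi],p}<\delta$ for all $\phi\in F$, yet $|\<{\psi,p}|\geq\varepsilon_0$ for some $\psi\in K$. Enumerating $K$ countably through the construction, I would inductively build sequences $(\phi_n)\subseteq K$ and projections $(p_n)\subseteq\mathcal M$ so that $\<{[\phi_j],p_n}<2^{-n}$ for every $j\leq n$ while $|\<{\phi_{n+1},p_n}|\geq \varepsilon_0$; concretely, at stage $n$ apply the hypothesis with $F=\{\phi_1,\dots,\phi_n\}$ and $\delta=2^{-n}$, letting $\phi_{n+1}\in K$ be a witness with $|\<{\phi_{n+1},p_n}|\geq\varepsilon_0$.

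Since $K$ is relatively weakly compact, I would apply the Eberlein--\v Smulian theorem to extract a subsequence, still denoted $(\phi_n)$, with $\phi_n\to \phi^\ast$ weakly in $\mathcal M_\ast$. The heart of the proof consists in turning the local smallness $\<{[\phi_j],p_n}<2^{-n}$ into uniform smallness on all of $K$, and this is where I would invoke the Akemann--Pedersen--Takesaki characterization of relative weak compactness in preduals of von Neumann algebras: $K\subseteq\mathcal M_\ast$ is relatively weakly compact iff for every decreasing net of projections $q_\alpha\downarrow 0$ one has $\sup_{\phi\in K}|\<{\phi,q_\alpha}|\to 0$. I would consider the projections $q_n=\bigvee_{k\geq n}p_k$, which form a decreasing sequence. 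The task is then to show that $q_n\downarrow 0$ (or at least that they converge strongly to a projection that yields a contradiction).

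For this strong convergence, the key point is that each $[\phi_j]$ is a positive normal functional, so for every $j$ the numerical sequence $\<{[\phi_j],q_n}$ decreases and, by $\sigma$-additivity on orthogonal pieces, can be bounded by $\sum_{k\geq n}\<{[\phi_j],p_k}\leq \sum_{k\geq n}2^{-k}=2^{1-n}$. This gives $\<{[\phi_j],q_\infty}=0$ for the weak limit projection $q_\infty=\bigwedge_n q_n$, and since every $\phi\in K$ is a weak limit of such $\phi_j$'s one concludes $\<{\phi,q_\infty}=0$ for every $\phi\in\overline K^{\,w}$. Thus $q_n\downarrow q_\infty$ in such a way that $q_n-q_\infty\downarrow 0$, and Akemann's criterion forces $\sup_{\phi\in K}|\<{\phi,q_n-q_\infty}|\to 0$; but $|\<{\phi_{n+1},p_n}|\leq|\<{\phi_{n+1},q_n-q_\infty}|+|\<{\phi_{n+1},q_\infty}|$ and both terms can be made smaller than $\varepsilon_0$ for large $n$, contradicting our construction.

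The main obstacle, as I see it, is that in the noncommutative setting the join $\bigvee_{k\geq n}p_k$ is not an ordinary union, so controlling $\<{[\phi_j],\bigvee_{k\geq n}p_k}$ by the sum $\sum_{k\geq n}\<{[\phi_j],p_k}$ requires the monotone $\sigma$-additivity of normal positive functionals on arbitrary increasing nets of projections, together with the elementary estimate $[\phi_j](p\vee q)\leq [\phi_j](p)+[\phi_j](q)$ for any two projections (which follows from $p\vee q\leq p+q$ in the positive cone). Getting these two ingredients to cooperate uniformly as one passes along the subsequence, and coupling them with the weak convergence of $\phi_n$ to $\phi^\ast$, is the delicate part of the argument; the commutative analogue (Dunford--Pettis / Vitali--Hahn--Saks for $L^1$) is comparatively straightforward because there $p_n\vee p_m$ is just $p_n\cup p_m$.
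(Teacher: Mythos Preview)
The paper does not prove this lemma; it is quoted verbatim from Takesaki's book and used as a black box. So there is no ``paper's proof'' to compare against, and your task was really to reproduce (or replace) Takesaki's argument.

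Your outline contains a genuine error at the step you yourself flag as delicate. The inequality $p\vee q\le p+q$ is \emph{false} for non-commuting projections, and so is the consequence $[\phi_j](p\vee q)\le [\phi_j](p)+[\phi_j](q)$. In $M_2(\mathbb{C})$, take $p=e_{11}$ and $q=\tfrac12\bigl(\begin{smallmatrix}1&1\\1&1\end{smallmatrix}\bigr)$; then $p\vee q=I$, while the vector state $\omega(x)=\langle x e_2,e_2\rangle$ gives $\omega(p)=0$, $\omega(q)=\tfrac12$, $\omega(p\vee q)=1$. Hence the estimate $\langle[\phi_j],\bigvee_{k\ge n}p_k\rangle\le\sum_{k\ge n}\langle[\phi_j],p_k\rangle$ on which your contradiction hinges is unjustified, and with it the claim that $q_n\downarrow q_\infty$ with $[\phi_j](q_\infty)=0$. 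The commutative intuition (Vitali--Hahn--Saks) breaks down precisely here: lattice suprema of projections are not subadditive under normal states.

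Takesaki's actual proof avoids joins of projections altogether. He first establishes (Theorem III.5.4) that relative weak compactness of $K\subset\mathcal{M}_\ast$ is equivalent to the existence, for each $\varepsilon>0$, of a single positive $\omega\in\mathcal{M}_\ast^+$ and $\delta>0$ such that $\|a\|\le 1$ and $\omega(a^\ast a+aa^\ast)<\delta$ force $\sup_{\phi\in K}|\phi(a)|<\varepsilon$. The lemma you are asked to prove is then immediate: approximate this dominating $\omega$ in norm by a finite positive combination of the $[\phi]$, $\phi\in K$, and take $F_\varepsilon$ to be the corresponding finite subset. No control of $p\vee q$ is needed. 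If you want to repair your approach, you would have to replace the join $\bigvee_{k\ge n}p_k$ by something like the support projection of $\sum_{k\ge n}2^{-k}p_k$ and argue via spectral calculus, but that essentially reproduces the domination criterion and is considerably more work than invoking it directly.
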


\begin{lemma}\label{nonref}
  Let $\Gamma$ be a discrete group. Let $\{\chi_n\colon n\in \N\}$ be a subset of $\Gamma$  such that, for each $n\in\N$, the order
	$o(\chi_n)$ of $\chi_n$ is   larger than $n $. Put $k(n)=o(\chi_n)$ if the order of $\chi_{n}$ is finite  and $k(n)=n$, otherwise, and define
  \begin{align*}
    F_n&=\left\{ \chi_n^j \colon 1\leq j\leq k(n) \right\} \quad \mbox{ and  } \quad
    E=\bigcup_{n\in \N } F_n.
  \end{align*}
  Then $A_E(\Gamma)$ is not reflexive.
\end{lemma}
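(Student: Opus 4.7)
The plan is to show that the closed unit ball $K$ of $A_E(\Gamma)$ is not weakly compact, thereby establishing non-reflexivity. I will apply Lemma \ref{takewc} (Takesaki's criterion for relative weak compactness in preduals of von Neumann algebras) to $K\subseteq A(\Gamma)=VN(\Gamma)_\ast$. The contradiction will come from constructing, for each $n$, an element $v_n$ in the unit ball of $A_{F_n}(\Gamma)\subseteq A_E(\Gamma)$ and a projection $p_n\in VN(\Gamma)$ such that (a) $|\<{v_n,p_n}|\geq c>0$ uniformly in $n$, and (b) $\<{[\phi],p_n}\to 0$ as $n\to\infty$ for each fixed $\phi\in A(\Gamma)$. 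Granting both, Lemma \ref{takewc} applied to $K$ with $\varepsilon=c/2$ supplies a finite $F_\varepsilon\subseteq K$ and $\delta>0$; by (b) the projection $p=p_n$ satisfies the hypothesis of Lemma \ref{takewc} for $n$ large, while (a) violates its conclusion.

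The construction exploits the fact that each cyclic subgroup $H_n=\<\chi_n\>$ is open in $\Gamma$ (as $\Gamma$ is discrete), so extension by zero gives an isometric inclusion $A(H_n)\hookrightarrow A(\Gamma)$ and an inclusion of von Neumann subalgebras $VN(H_n)\hookrightarrow VN(\Gamma)$, with compatible pairings. In the finite-order case ($k(n)=o(\chi_n)<\infty$, so $F_n=H_n$), take $v_n=\mathbf{1}_{H_n}$ and $p_n=|H_n|^{-1}\sum_{h\in H_n}\lambda(h)$: Lemma \ref{cyclic}(2) gives $\|v_n\|_{A(\Gamma)}=1$, a direct computation yields $\<{v_n,p_n}=1$, and (b) follows because $[\phi]\in A(\Gamma)\subseteq c_0(\Gamma)$ and $|H_n|\to\infty$, so the average of $[\phi]$ over $H_n$ tends to zero. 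In the infinite-order case ($k(n)=n$), identify $A(H_n)\cong L^1(\T)$ and $VN(H_n)\cong L^\infty(\T)$ via Fourier series, and take $v_n$ to be a modulated normalised Fej\'er kernel of order $\lfloor(n-1)/2\rfloor$ whose Fourier support sits inside $\{1,\dots,n\}\subseteq F_n$, so that $\|v_n\|_{A(\Gamma)}=1$ while $|v_n|$ peaks at height $\asymp n$. For $p_n$, take $\mathbf{1}_{A_n}$ for a measurable $A_n\subseteq\T$ engineered to collect the peak of $v_n$ while avoiding phase cancellation, securing (a).

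The main obstacle will be verifying (b) in the infinite-order case: the pairing becomes $\<{\phi,p_n}=\int_{A_n}\Phi_n$, where $\Phi_n(t)=\sum_j\phi(\chi_n^j)e^{2\pi ijt}\in L^1(\T)$ is the Fourier transform of the restriction $\phi|_{H_n}\in A(H_n)$. Since $\phi\in c_0(\Gamma)$, the coefficients $\phi(\chi_n^j)$ are negligible outside a fixed finite subset of $\Gamma$; combined with $|A_n|\to 0$, and noting that the constant coefficient $\phi(e)$ vanishes when $e\notin E$ (automatic when all $\chi_n$ have infinite order), a careful estimate should yield $\int_{A_n}\Phi_n\to 0$ for every fixed $\phi\in A(\Gamma)$. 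Once (a) and (b) are in place, the contradiction with Lemma \ref{takewc} completes the proof.
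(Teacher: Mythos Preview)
Your overall strategy---contradict Takesaki's criterion (Lemma \ref{takewc}) by producing normalized $v_n\in A_{F_n}(\Gamma)$ and projections $p_n\in VN(\Gamma)$ arising from subsets of the dual of the cyclic subgroup $\langle\chi_n\rangle$---is exactly the paper's approach. The paper, however, treats the finite- and infinite-order cases uniformly: it takes $v_n$ to be the normalized Dirichlet-type sum $B_n\sum_{j=0}^{k(n)-1}\delta_{\chi_n^j}$ and uses the $L^{1/2}$-versus-$L^1$ estimates of Lemma \ref{cyclic} to define $\mathcal A_n=\{|\widehat{v_n}|>1/(2\alpha_n)\}$ with $\alpha_n=\|\widehat{v_n}\|_{1/2}\to 0$, obtaining both $m_{G_n}(\mathcal A_n)\to 0$ and $\int_{\mathcal A_n}|\widehat{v_n}|\geq 1-\sqrt{1/2}$ in one stroke. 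Your case-split (indicator of $H_n$ in the finite case, modulated Fej\'er kernel in the infinite case) is a legitimate alternative for part (a), though the phrase ``engineered to avoid phase cancellation'' needs to be made concrete.

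The genuine gap is in your argument for (b). Lemma \ref{takewc} requires $\langle[\phi],p_n\rangle<\delta$ for the finitely many $\phi\in F_\varepsilon$, and $[\phi]=\phi_1^++\phi_1^-+\phi_2^++\phi_2^-$ is a \emph{positive-definite} element of $A(\Gamma)$, so $[\phi](e)=\|[\phi]\|_\infty>0$ whenever $\phi\neq 0$. Your later discussion switches from $[\phi]$ to $\phi$ and invokes ``$\phi(e)=0$ when $e\notin E$''; that observation is irrelevant to $[\phi]$, and the $c_0$-argument as written does not control $\int_{A_n}\Psi_n$ for $\Psi_n=\widehat{[\phi]|_{H_n}}$ (the tail coefficients are small but there may be infinitely many, and you only have an $L^1$ bound on $\Psi_n$, not uniform integrability). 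The correct route---and this is what the paper does---is to approximate each $[\phi]$ in the $A(\Gamma)$-norm by a \emph{finitely supported} function $v$; then $\langle v,p_n\rangle$ is an integral over $A_n$ of a trigonometric polynomial with at most $|\mathrm{supp}\,v|$ terms, hence bounded by $\|v\|_{\ell^1}\cdot m_{G_n}(A_n)\to 0$. With this fix your scheme goes through and coincides with the paper's proof.
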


\begin{proof}
  Consider the functions
  \[ u_n=B_n \sum_{j=0}^{k(n)-1} \delta_{\chi_n^j}   \in A(\Gamma),\]
  where $B_n$ is chosen so that $\norm{u_n}_{A(\Gamma)}=1$. If $G_n$ is   the character group of the cyclic subgroup $\Gamma_n\subseteq \Gamma$ generated by $\chi_n$,  then by  Proposition 2.4.1 of \cite{kanilaubook}, one has that $\norm{\widehat{u_n}}_{L^1(G_n)}=\norm{u_n}_{A(\Gamma_n)}=\norm{u_n}_{A(\Gamma)}=1$.

Put $\alpha_n=\norm{\widehat{u_n}}_{L^{1/2}(G_n})$. By Lemma \ref{cyclic},
 \[\lim_{n\to \infty}  \frac{\norm{\widehat{u_n}}_{L^p(G)}}{\norm{\widehat{u_n}}_{L^1(G)}}=0,\]
hence
 $ \lim_{n\to \infty}\alpha_n=0.$

%After inequalities \eqref{cy1} and \eqref{cy2}, it is clear that  \[\lim_{n\to \infty}  \frac{\norm{\widehat{u_n}}_{L^p(G)}}{\norm{\widehat{u_n}}_{L^1(G)}}=0.\]

We now define  %\[\norm{\widehat{u_n}}_{L^{1/2}(G_n)}\leq \frac{C}{\log n}. \]
  \[ \mathcal{A}_n=\left\{x \in G_n \colon |\widehat{u_n}(x)|>\frac{1}{2\alpha_n}\right\}.\]
  Then, letting $\m{$G_{n}$}$ denote the (normalized) Haar measure of $G_n$,  \begin{equation}
    \label{man}
  \m{$G_n$}(\mathcal{A}_n)\leq 2\alpha_n.\end{equation} On the other hand, since
  $\left|\widehat{u_n}(x)\right|\leq \left(\frac{1}{2\alpha_n}\right)^{1/2}  \left|\widehat{u_n}(x)\right|^{1/2}$, for every $x\in \mathcal{A}_n^c$, we have that
  \begin{align*}
\alpha_n^{\frac{1}{2}}&= \norm{\widehat{u_n}}_{L^{1/2}(G_n)}^{\frac{1}{2}}\geq \int_{\mathcal{A}_n^c} |\widehat{u_n}(x)|^{1/2} d\m{$G_n$}(x)\geq\left(2\alpha_n\right)^{1/2}\int_{\mathcal{A}_n^c} |\widehat{u_n}(x)| d\m{$G_n$}(x).
  \end{align*}
  It follows that
  \[\int_{\mathcal{A}_n^c} |\widehat{u_n}(x)| d\m{$G_n$}(x)\leq \sqrt{\frac{1}{2}}\]
    %\left(  \frac{1}{2}\right)^{\frac{1}{2}}
  and, hence, that
\begin{align}    \label{fnan}
 \int_{\mathcal{A}_n} |\widehat{u_n}(x)|d\m{$G_n$}(x)&\geq 1- \sqrt{ \frac{1}{2}}.
  \end{align}

Let now  $r_n\colon A(\Gamma)\to A(\Gamma_n)$ be the restriction mapping (see \cite[Proposition 2.6.6]{kanilaubook})  and define $p_n\in VN(\Gamma)$ as $p_n=r_n^\ast\left(\widehat{\Cf{\mathcal{A}_n}}\right)$, with $r_n^\ast\colon VN(\Gamma_n)\to VN(\Gamma)$ being the adjoint of $r_n$ and $\widehat{\Cf{\mathcal{A}_n}}$ the Fourier transform of $\Cf{\mathcal{A}_n}\in L^\infty(G_n)$.

 % For each $n$ we consider now the projection $p_n\in VN(\Gamma)$ that is obtained after applying to $\widehat{\Cf{\mathcal{A}_n}}\in VN(\Gamma_n)$ the adjoint of the restriction mapping $r\colon A(\Gamma)\to A(\Gamma_n)$ (see \cite[Proposition 2.6.6]{kanilaubook}).
 Observe that
  \begin{equation}
    \label{pndelta}
  \<{\delta_x,p_n}=0,\quad \mbox{ if } x\notin \Gamma_n.\end{equation}

  Suppose now that $A_E(\Gamma)$ is reflexive, then the set $\{u_n\colon n\in \N\}$ is relatively weakly compact. Pick $0<\varepsilon<1-\sqrt{\frac{1}{2}}$. By Lemma \ref{takewc}, there are then  $n_1,\ldots,n_k$ and $\delta>0$ such that, for any projection $p$ in $VN(\Gamma)$,
  \begin{equation}
    \label{wc}
 \<{[u_{n_i}],p}<\delta, \; i=1,\ldots k,\quad  \implies    \left| \<{u_{n},p}\right|<\varepsilon\mbox{ for every $n\in \N$}.  \end{equation}
Pick, for each  $i=1,\ldots,k$, a function $v_i \in A(\Gamma)$ with finite support $\mathcal{F}_i\subseteq \Gamma$
such that
\[ \norm{v_i-[u_{n_i}]}<\frac{\delta}{2}\]
   Next, for each $n\in \N$ and $i=1,\ldots,k$ we  define  $w_{i,n}\in A(\Gamma)$
as $v_{i}$ on $\Gamma_n$ and 0  elsewhere. Then, by  \eqref{pndelta},
\begin{align*}
\<{v_{i},p_n}&=\<{w_{i,n},p_n}.
 \end{align*}
%
%with support $\mathcal{F}_{i,n}$ such that
%
% \mathcal{F}_{i,n}&= \supp v_i\cap \Gamma_n\\
%\mathcal{F}_{i,n}=\mathcal{F}_{i,m}& \mbox{ implies } w_{i,n}=w_{i,m}\\

Since the support of each $v_i$ is finite,  we will be able to find  $w_1,\ldots,w_k$  and $n_0\in \mathbb{N}$ such that
 \begin{align*}
 w_{i,n}&=w_i, \mbox{ for every $i=1,\ldots,k $ and every $n\geq n_0$}.
 \end{align*}
  We finally observe that, putting $D=\max_i \norm{w_i}_{\ell^1(\Gamma)}$,
\[\left|\<{w_i,p_n}\right|=\left|\int_{\mathcal{A}_n} \widehat{w_i}(x)d\m{$G_n$}(x)\right|\leq D \, \m{$G_n$}(\mathcal{A}_n), \]
for each $n\geq n_0$ and each $i=1,\ldots,k$. Since, \eqref{man},  $ \m{$G_n$}(\mathcal{A}_n)\leq 2\alpha_n$, and $\lim_n \alpha_n=0$,  we deduce that
for $n\geq n_0$ large enough, $\left|\<{w_i,p_{n}}\right|<\delta/2$ ($i=1,\ldots,k$) and, hence, for such $n$ and any $i=1,\ldots,k$,
\begin{align*}
  \<{[u_{n_i}],p_{n}}&\leq \left|  \<{[u_{n_i}]-v_{i},p_{n}}\right|+
  \left|\<{w_i,p_{n}}\right|\\
  &\leq \norm{[u_{n_i}]-v_{i}}\cdot \norm{p_{n}}+ \left|\<{w_i,p_{n}}\right|=\delta.
\end{align*}
The choice of $u_{n_1}$, \ldots, $u_{n_k}$ allows us to conclude   (by condition \eqref{wc}), that if $A_E(\Gamma)$ were reflexive, then one would have  that,  given  $n\geq n_0$ large enough,
\[  \<{u_{m},p_{n}}<\varepsilon, \mbox{ for every $m\in \N$}.\]
But we know, \eqref{fnan},  that for every $n\in \N$,
\[ \left|\<{u_{n},p_{n}}\right|> 1-\sqrt{\frac{1}{2}}> \varepsilon,\]
showing that  $A_E(\Gamma)$ cannot be reflexive.
\end{proof}
\begin{theorem}
  \label{gen:riesznotlp}
  Let $\Gamma$ be a discrete group  containing a countable subset  $A$ that is  convergently embedded  in a locally compact, metrizable group $K$ and such that $\<{A}$ has only finitely many elements of each order. Then $\Gamma$ contains a   Riesz set $E$   such that $A_E(\Gamma)$ is not reflexive.
\end{theorem}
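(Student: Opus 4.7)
The plan is to construct $E\subseteq \Gamma$ fulfilling the hypotheses of Lemma \ref{nonref} (so $A_E(\Gamma)$ is not reflexive) and, after possibly removing the identity, realizing it as a convergently embedded set (so Theorem \ref{ceimplRiesz} yields the Riesz property). First I would extract a subsequence $(a_n)$ of $A$ with $\alpha(a_n)\to p$ rapidly, and form the differences $b_n=a_na_{n+1}^{-1}\in\<{A}$, which satisfy $\alpha(b_n)\to e_K$. Since $\<{A}$ contains only finitely many elements of each order, for every $N$ only finitely many $b_n$ have order $\leq N$, so a diagonal argument allows me to pass to a subsequence (relabelled again as $(b_n)$) with $o(b_n)>n$ for each $n$ (possibly $o(b_n)=\infty$).

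A second refinement, using the metrizability of $K$, would ensure that each power $\alpha(b_n^j)$, $1\leq j\leq k(n)$, lies in a shrinking neighborhood of $e_K$, where $k(n)$ is as in Lemma \ref{nonref}: $k(n)=n$ if $b_n$ has infinite order and $k(n)=o(b_n)$ otherwise. When $b_n$ has infinite order, continuity of the power map at $e_K$ in the metrizable topological group $K$ readily provides a neighborhood $V_n$ of $e_K$ with $V_n^n\subseteq B_K(e_K,1/n)$, so it suffices to choose $b_n$ with $\alpha(b_n)\in V_n$. The finite-order case is the main obstacle: the entire cyclic subgroup $\alpha(\<{b_n})$ must be made to fit in the shrinking neighborhood, which is not automatic for arbitrary locally compact $K$ (cf.\ roots of unity in $\mathbb{T}$). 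Exploiting the abundance of elements of each large order guaranteed by the hypothesis, one selects $b_n$ among suitable words in the $a_k$'s whose entire cyclic $\alpha$-orbit clusters near $e_K$.

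Relabelling the refined sequence as $(\chi_n)$, I would arrange (taking the $a_n$'s sparsely enough) that the cyclic subgroups $\<{\chi_n}$ intersect pairwise only at $e_\Gamma$, and apply Lemma \ref{nonref} to obtain $E=\bigcup_n F_n$ with $A_E(\Gamma)$ non-reflexive. The identity can appear in $E$ only when some $\chi_n$ has finite order, via $\chi_n^{o(\chi_n)}=e_\Gamma$; setting $E'=E\setminus\{e_\Gamma\}$, the space $A_{E'}(\Gamma)$ has codimension at most one in $A_E(\Gamma)$ and so is also non-reflexive. Taking $K_{E'}=\overline{\alpha(\<{E'})}$, a locally compact closed subgroup of $K$, the restriction $\alpha|_{\<{E'}}$ is a group isomorphism with dense range; by construction $\alpha(E')$ is relatively compact and has $e_K$ as its unique limit point, with $e_K\notin\alpha(E')$ since $e_\Gamma\notin E'$. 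Theorem \ref{ceimplRiesz} then gives that $E'$ is Riesz, completing the proof.
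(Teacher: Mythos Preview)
Your overall strategy coincides with the paper's: pass from $A$ to differences $\chi_n=\psi_{n+1}\psi_n^{-1}\in\<{A}$ so that $\alpha(\chi_n)\to e_K$, thin so that $o(\chi_n)>n$, build $E$ from blocks $F_n$ of powers of the $\chi_n$, and then invoke Lemma~\ref{nonref} for non-reflexivity and Theorem~\ref{ceimplRiesz} for the Riesz property. The extra pairwise disjointness of the cyclic subgroups $\<{\chi_n}$ that you impose is not required by either cited result and may be dropped.

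Where your proposal has a genuine gap is precisely where you flag one. You identify the finite-order case as ``the main obstacle'' and propose to ``select $b_n$ among suitable words in the $a_k$'s whose entire cyclic $\alpha$-orbit clusters near $e_K$.'' That is not an argument, and your own parenthetical about roots of unity shows why it cannot become one: if $\alpha(b_n)$ is a primitive $m$-th root of unity in $\T$, its orbit is the full set of $m$-th roots and never clusters near $1$, whatever word you pick. The paper bypasses this with a far simpler device. Fixing a left-invariant metric $d$ on $K$, telescoping together with left-invariance gives
\[
d\bigl(\alpha(\chi)^j,1\bigr)\le j\cdot d\bigl(\alpha(\chi),1\bigr)\quad\text{for every }j\ge 1.
\]
One then passes to a further subsequence $(\chi_{N(n)})_n$ with $d(\alpha(\chi_{N(n)}),1)<1/n^{2}$, so that $d(\alpha(\chi_{N(n)}^{\,j}),1)<1/n$ for all $1\le j\le n$, \emph{uniformly in the order of} $\chi_{N(n)}$. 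No control of the full cyclic orbit is ever needed, and no ``selection among suitable words'' is required; the continuity-of-powers argument you sketch for the infinite-order case is, via this inequality, exactly what one uses in both cases. Your care in excising $e_\Gamma$ from $E$ before verifying the convergent-embedding conditions is, on the other hand, a point the paper treats less explicitly.
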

\begin{proof}
  Whenever necessary, we will consider in this proof a   left-invariant distance $d$ on $K$  that induces its topology.

Let $\alpha\colon \<{A} \to K$ be  the isomorphism provided by Definition \ref{def:ce} and let $k_0$ denote the only limit point of $\alpha(A)$. We can
 find a sequence by $(\psi_n)_n\subseteq A$ with $\psi_{n+1}\notin \{\psi_1,\ldots,\psi_n\}$ for each $n\in \N$ such that   $\lim_n \alpha(\psi_n)=k_0$.

  We then define a new sequence
  \[ \chi_n=\psi_{n+1}\cdot  \psi_n^{-1}.\]
  The image under $\alpha$ of this new sequence will converge to the identity 1 of $K$.
  Since  $\<{A}$ has only finitely many elements of each order, we can  assume, upon removing some terms,  that
  $o(\chi_n)>n$ for every $n\in \N$.
  %We don't exclude here the possibility that $\alpha(\chi_n)=k_0$ for every $n\in \N$.

  Find, for each $n\in \N$, $N(n)>n$ large enough so that
  \begin{equation}
    \label{N(n)} \di{\alpha(\chi_{\txt{\tiny$N(n)$}})}{1} <\frac{1}{n^2}. \end{equation}

  As in Lemma \ref{nonref}, we    put $k(n)=o(\chi_{\txt{\tiny$N(n)$}})$ if the order of $\chi_{\txt{\tiny$N(n)$}}$ is finite and $k(n)=N(n)$ otherwise. Next, we define
  \begin{equation}
    \label{fns}
    F_n=\set{\chi_{\txt{\tiny$N(n)$}}^{j}}{1\leq j\leq k(n)}
  \end{equation}
  and
  \begin{equation}
    \label{E}
   E=\bigcup_{n\in \N} F_n.
  \end{equation}
  We next  see that $E$ is a Riesz set. This will finish the proof, since $A_E(\Gamma)$ cannot be reflexive by Lemma \ref{nonref}.

   By Theorem  \ref{ceimplRiesz}, it is enough to check that, after ordering the set $N=\{(n,j)\colon 1\leq j\leq n,\;n\in\N\}$ lexicographically,
    the sequence $\{\alpha\left(\chi_{N(n)}^j\right)\colon (n,j)\in N\}$ converges     to $1$.  Let to that end $\varepsilon>0$.

   Pick $n_0$ such that $\dfrac{1}{n_0}<\varepsilon$ and take any $n\geq n_0$ and $1\leq j\leq n$.
   Then,
\begin{align*}
  \di{\alpha(\chi_{\txt{\tiny$N(n)$}}^{j})}{1}&\leq \\&\leq \di{\alpha(\chi_{\txt{\tiny$N(n)$}}^{j})}{\alpha(\chi_{\txt{\tiny$N(n)$}}^{j-1})}+
  \di{\alpha(\chi_{\txt{\tiny$N(n)$}}^{j-1})}{\alpha(\chi_{\txt{\tiny$N(n)$}}^{j-2})}\ldots + \di{\alpha(\chi_{\txt{\tiny$N(n)$}})}{1}\\&= j \cdot \di{\alpha(\chi_{\txt{\tiny$N(n)$}})}{1}\leq \frac{1}{n}<\varepsilon.
\end{align*}%
%   \label{chitoe}
%     \di{\alpha(\chi_{\txt{\tiny$N(n)$}}^{j})}{k_0^{n!j}}\leq \textcolor{red}{  j     \, \di{\alpha(\chi_{\txt{\tiny$N(n)$}})}{k_0}\overset{\eqref{N(n)}}{\leq }} \frac{n!\sqrt{n}}{n!n}<\frac{\varepsilon}{2}.
%   \end{equation}%
%Inequality  \eqref{chitoe} together with the choice of $n_0$, shows that
%\[ \di{\chi_{\txt{\tiny$N(n)$}}^{n!j}}{1}\leq \di{\chi_{\txt{\tiny$N(n)$}}^{n!j}}{k_0^{n!j}}+\di{k_0^{n!j}}{1}< \varepsilon,\]
%for every $n\geq n_0$ and every $1\leq j\leq \sqrt{n}$. This proves that $1$ is the only limit  point of $E$ in $\T$. Lemma \ref{bleib} yields  then that $E$ is a Rosenthal set for the group $G/\Lambda^\perp$ whose character group is $\Lambda$.
%% Lemma \ref{quot} shows then that $E$ is actually a Rosenthal set for $G$.
\end{proof}

It follows quite easily from Theorem  \ref{gen:riesznotlp} that every  Abelian group $\Gamma$ contains $E\subseteq \Gamma$  that is Riesz but  produces a nonreflexive Arens regular ideal $A_E(\Gamma)=L^1_E(G)$, where $G=\widehat \Gamma$, see Remark \ref{remabref}. We choose to prove a stronger theorem that might be of independent interest. This will require a little effort. We start with the following Lemma.

\begin{lemma}[Corollary 2.3 of \cite{hare88pac}]
  \label{hare88pac} Let $\gm$  be a discrete group and $E\subseteq \gm$ be a $\Lambda(p)$-set for some $p>0$.  There are then  $c$ and $0<\varepsilon<1$ such that for any $\chi_1,\ldots,\chi_N\in E$
  \[\left|\left\{ \prod_{j=1}^N \chi_j^{k_j}     \colon k_j\in\{0,1\}\,\right\}\right|\leq c 2^{\varepsilon N}.\]
\end{lemma}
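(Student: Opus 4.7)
The plan I would follow is to transplant the classical Rudin--Bonami technique for $\Lambda(p)$-sets to the non-commutative $L^p$-theory of $VN(\Gamma)$: associate to the tuple $(\chi_1,\dots,\chi_N)$ a Riesz-product-type operator whose expansion directly exposes the product set $P=\{\prod_{j=1}^N\chi_j^{k_j}\colon k_j\in\{0,1\}\}$, and extract a cardinality bound on $|P|$ by pitting the combinatorial structure of this expansion against the analytic constraint imposed by the $\Lambda(p)$-inequality $\|f\|_p\le K\|f\|_q$.

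Concretely, I would begin with the operator
\[U=\prod_{j=1}^N\bigl(1+\lambda(\chi_j)\bigr)=\sum_{\sigma\in P}m(\sigma)\,\lambda(\sigma)\in VN(\Gamma),\]
with $m(\sigma)\ge 1$ for $\sigma\in P$ and $\sum_\sigma m(\sigma)=2^N$. The cardinality $|P|$ is coupled to the multiplicity function $m$ through Plancherel ($\|U\|_2^2=\sum_\sigma m(\sigma)^2$), through submultiplicativity ($\|U\|_\infty\le 2^N$), and through the pigeon-hole identity $|P|\cdot\max_\sigma m(\sigma)\ge 2^N$. A bound of the form $|P|\le c\,2^{\varepsilon N}$ with $\varepsilon<1$ is then equivalent to forcing a strong concentration of $m$, namely $\max_\sigma m(\sigma)\ge c^{-1}\,2^{(1-\varepsilon)N}$: many of the $2^N$ choices of $\vec k\in\{0,1\}^N$ must collapse to a common product. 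The task is therefore to force such collapses from the $\Lambda(p)$-hypothesis on $E$.

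To trigger this forcing, I would use a Rademacher symmetrisation $V_\eta=\prod_j(1+\eta_j\lambda(\chi_j))$, $\eta\in\{\pm1\}^N$, combined with the non-commutative Khintchine inequality of Lust-Piquard. Averaging over $\eta$ reduces the relevant $L^p$-norms of polynomial expressions in $U$ to $L^p$-norms of sums $\sum_j c_j\lambda(\chi_j)$, which, being supported on $E$, are directly governed by the $\Lambda(p)$-comparison $\|\cdot\|_p\le K\|\cdot\|_q$. Iterating this comparison along the telescoping expansion $U=U_{N-1}(1+\lambda(\chi_N))$ and using non-commutative H\"older in $VN(\Gamma)$, one aims at an $L^p$-estimate on $U$ whose accumulated multiplicative constant $K^N$ is balanced by the exponential gain drawn from the $L^p/L^q$ comparisons at each stage. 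H\"older interpolation between $\|U\|_\infty\le 2^N$ and the resulting $L^p$-estimate then feeds the concentration requirement on $m$ described above.

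The principal obstacle, and the technical heart of the argument, is that $U$ is supported on the product set $P\subseteq\Gamma$ rather than on $E$, so the $\Lambda(p)$-inequality cannot be invoked on $U$ directly; only the individual translates $\lambda(\chi_j)$ or their linear combinations live in the $\Lambda(p)$-scene. Bridging this gap through the Rademacher symmetrisation and the Lust-Piquard inequality, and carefully arranging the telescoping so that the cumulative factor $K^N$ is strictly dominated by the exponential saving extracted from the iterated $L^p/L^q$ comparison---so that some $\varepsilon<1$ can actually be chosen---is where the subtlety of the argument lies. Once this delicate bookkeeping is in place, the pigeon-hole coupling between $|P|$ and $\max_\sigma m(\sigma)$ closes the proof.
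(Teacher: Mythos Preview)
The paper does not supply a proof of this lemma; it is quoted as Corollary~2.3 of Hare~\cite{hare88pac} and is only applied (in Corollary~\ref{cor:rnotlp:ab}) for abelian~$\Gamma$, which is the setting of Hare's paper. So there is no in-house argument against which to compare your proposal.

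More to the point, the statement as literally written is false, and you are attempting to prove that false version. Take $\Gamma=\Z$ and the lacunary set $E=\{3^j:j\ge 0\}$, which is Sidon and hence $\Lambda(p)$ for every $p$. With $\chi_j=3^{j-1}\in E$ the sums $\sum_j k_j\,3^{j-1}$, $k_j\in\{0,1\}$, are pairwise distinct, so the product set has exactly $2^N$ elements for every $N$ and no bound $c\,2^{\varepsilon N}$ with $\varepsilon<1$ can hold. What Hare actually proves is $|E\cap P|\le c\,2^{\varepsilon N}$ for an arbitrary $N$-dimensional parallelepiped $P$; the paper's application is unaffected because there the parallelepiped $F_n$ lies inside $E$ by construction, so $|E\cap F_n|=|F_n|$.

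This also pinpoints why your strategy cannot close. You claim that $|P|\le c\,2^{\varepsilon N}$ is \emph{equivalent} to $\max_\sigma m(\sigma)\ge c^{-1}2^{(1-\varepsilon)N}$ and then set out to establish the latter. But the implication goes only one way: a small $|P|$ forces a large maximum via $\sum_\sigma m(\sigma)=2^N$, yet a single heavy fibre $m(\sigma_0)\approx 2^{(1-\varepsilon)N}$ is fully compatible with the remaining words landing on pairwise distinct targets, leaving $|P|$ of order $2^N$. So even if the Rademacher--Lust-Piquard machinery produced your concentration estimate, the desired cardinality bound would not follow. A correct argument must bring $E$ back into the picture---for instance by testing the trigonometric polynomial $\sum_{\chi\in E\cap P}\chi$ (whose spectrum does lie in $E$, so the $\Lambda(p)$ inequality applies) against a Riesz product built on the generators of $P$---rather than trying to control the full Riesz product, whose spectrum $P$ is not contained in $E$.
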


 \begin{corollary}\label{cor:rnotlp:ab}
Every discrete Abelian group $\Gamma$, contains a subset $E$ that is  Rosenthal but is not $\Lambda(p)$ for any $p>0$.\end{corollary}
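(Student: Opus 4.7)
The plan is to construct $E$ as a convergently embedded subset of $\Gamma$ whose elements form a dissociate set, i.e., for which all $\{0,1\}$-subset sums are distinct. Blei's theorem cited immediately before Theorem~\ref{ceimplRiesz} will then give that $E$ is Rosenthal, while the dissociativity will yield $2^n$ distinct products $\prod_{j=1}^{n}\chi_j^{k_j}$ ($k_j\in\{0,1\}$) for any $n$ elements of $E$, which for $n$ large exceeds $c\,2^{\varepsilon n}$ no matter how $c>0$ and $\varepsilon<1$ are prescribed. Lemma~\ref{hare88pac} then ensures that $E$ is not $\Lambda(p)$ for any $p>0$.

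I would first reduce to the case where $\Gamma$ is countable. When $\Gamma$ is uncountable, fix any countable infinite subgroup $\Gamma_0\leq\Gamma$ and build $E\subseteq\Gamma_0$. Non-$\Lambda(p)$ transfers automatically since the criterion of Lemma~\ref{hare88pac} only involves the algebraic relations among elements of $E$. For the Rosenthal property, any $\mu\in M_E(\widehat\Gamma)$ has $\widehat\mu$ vanishing off $E\subseteq\Gamma_0$, hence $\mu$ is invariant under $\Gamma_0^\perp\leq\widehat\Gamma$ and so descends to a measure $\mu_0\in M_E(\widehat{\Gamma_0})$; absolute continuity of $\mu_0$ (i.e., Rosenthal in $\Gamma_0$) lifts to absolute continuity of $\mu$ via the product decomposition of Haar on $\widehat\Gamma$.

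For countable infinite Abelian $\Gamma$, its dual $\widehat\Gamma$ is compact metrizable Abelian. Pick a countable dense subgroup $D\leq\widehat\Gamma$; endowed with the discrete topology, its dual $K=\widehat{D}$ is again compact metrizable Abelian. Pontryagin duality turns the dense inclusion $D\hookrightarrow\widehat\Gamma$ into an injective continuous homomorphism $\alpha\colon\Gamma\to K$ (namely $\gamma\mapsto \gamma|_D$) with dense image. Since $K$ is infinite compact Hausdorff, every nonempty open set in $K$ is infinite, so for every $n$ the dense set $\alpha(\Gamma)$ meets $B_{1/n}(1_K)$ in infinitely many points. I now select $\chi_1,\chi_2,\ldots\in\Gamma\setminus\{e\}$ inductively: having chosen $\chi_1,\ldots,\chi_{n-1}$, the set of next-choices that would destroy the dissociativity of $\{\chi_1,\ldots,\chi_n\}$ is the finite collection of $\{-1,0,1\}$-combinations of the previous $\chi_i$'s (at most $3^{n-1}$ elements), so I can pick $\chi_n\in\alpha^{-1}(B_{1/n}(1_K))$ that avoids this bad set.

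Setting $E=\{\chi_n:n\in\N\}$ and $K'=\overline{\alpha(\<{E})}$, the restriction $\alpha|_{\<{E}}\colon\<{E}\to K'$ is an injective homomorphism with dense image into a compact metrizable Abelian group, and $\alpha(E)$ has only $1_{K'}$ as a limit point, with $1_{K'}\notin\alpha(E)$ (since $\alpha$ is injective and $\chi_n\neq e$). So $E$ is convergently embedded in $K'$ in the sense of Definition~\ref{def:ce}; Blei's theorem then yields that $E$ is Rosenthal, and dissociativity combined with Lemma~\ref{hare88pac} shows that $E$ is not $\Lambda(p)$ for any $p>0$. The main delicate point is the simultaneous control of dissociativity and closeness to the identity in $K$, which is handled by the cardinality estimate at each inductive step together with the infinitude of $\alpha(\Gamma)\cap B_{1/n}(1_K)$.
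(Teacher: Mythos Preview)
Your construction is correct and in fact cleaner than the paper's. The paper argues by cases: if $\Gamma$ contains an infinite independent set $A$, it partitions $A$ into blocks $A_n$ of size $n$, builds $E$ as the union of the $\{0,1\}$-cubes $F_n=\{\prod_{j}\chi_{n,j}^{k_j}:k_j\in\{0,1\}\}$, and invokes a \emph{different} lemma of Blei (Lemma~2.3 of \cite{blei75dio}) for the Rosenthal property together with Lemma~\ref{hare88pac} for non-$\Lambda(p)$; if no such independent set exists, the paper shows $\Gamma$ contains a countable subgroup with finitely many elements of each order and then passes through the full machinery of Theorem~\ref{gen:riesznotlp} (which in turn uses Lemma~\ref{nonref}). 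Your argument sidesteps the case split entirely by manufacturing, in one stroke, a dissociate sequence that is convergently embedded, so that Blei's Theorem~B and Lemma~\ref{hare88pac} apply directly and uniformly. This is more economical; the price is that you use the Bohr-type embedding $\alpha\colon\Gamma\to\widehat{D_d}$ even in cases where the paper gets away with purely algebraic independence.

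One slip worth fixing: in your reduction to countable $\Gamma$ you describe the transfer in terms of \emph{measures} and absolute continuity, which is the Riesz property, not the Rosenthal property $L^\infty_E=C_E$. In fact the reduction is unnecessary. Simply pick any countably infinite subgroup $\Gamma_0\leq\Gamma$ and run your entire construction inside $\Gamma_0$. Since Definition~\ref{def:ce} only involves $\<{E}$, the resulting $E$ is convergently embedded in the compact Abelian group $K'$ regardless of whether you view it as a subset of $\Gamma_0$ or of $\Gamma$; Blei's theorem then gives Rosenthal in $\Gamma$ directly. Likewise, the conclusion of Lemma~\ref{hare88pac} is a purely combinatorial statement about products in $\Gamma$, so your dissociativity count $2^N$ already rules out $\Lambda(p)$ in $\Gamma$ without any transfer step.
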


\begin{proof}
Suppose first that $\Gamma$ contains an infinite independent subset $A=\{\chi_n \colon n\in \N\}$. Partition $A=\bigcup A_n$ with $|A_n|=n$ for each $n\in \N$. Enumerate $A_n=\{\chi_{n,j}\colon 1\leq j\leq n\}$.

  Define, for each $n\in \N$,
  \[F_n=\left\{ \prod_{j=1}^n \chi_{n,j}^{k_j}\colon k_j\in \{0,1\}\right\}.\]
  Then, for $n\neq m\in \N$, $\<{F_n}\cap \<{F_m}=\{1\}$. It follows directly from Lemma 2.3 of \cite{blei75dio} that  $E:=\bigcup_{n\in \N}F_n$  is a Rosenthal set.

  On the other hand, the independence of $A_n$ implies that $\left|F_n\right|=2^n-1$ and  Lemma \ref{hare88pac} shows that $E$ cannot be a $\Lambda(p)$ set.

  If $\Gamma $ does not contain any infinite independent set,  then either $\Gamma$ contains  a copy of the integers $\Z$ or
   there are primes $p_1,\ldots,p_N$ and $q_1,\ldots,q_M$ (not necessarily different) such that
\[\Gamma\cong \left(\bigoplus _{j=1}^N \Z(p_j)\right)\bigoplus  \left(\bigoplus _{j=1}^M \Z(q_j^\infty)\right),\]
see \cite[4.1.1, 4.1.3]{robi96}.
In both cases, $\Gamma$ contains a countably infinite   subgroup $\Lambda$ with at most finitely many elements of the same finite order. If one enumerates $\Lambda=\{\chi_n\colon n\in \N\}$ and picks for each $n\in \N$,  an element $x_{n}\in G$ with $\chi_{n}(x_n)\neq 1$, then, naming $S$ the subgroup generated by $\{x_n \colon n\in \N\}$,  we obtain that  the evaluation map
$\alpha \colon \Lambda\to \widehat{S_d}$ is a group isomorphism with dense range. We then take a sequence $A$ in $\Lambda$ such that $\alpha(A)$ converges to some $p\in \widehat{S_d}\setminus \alpha(\Lambda)$. This set $A$ is then convergently embedded in $\widehat{S_d}$.  Theorem  \ref{gen:riesznotlp} proves that $\Gamma$ contains a Riesz set $E$ such that $A_E(\Gamma)$ is not reflexive.   Theorem  B of \cite{blei75dio} proves that  $E$, as every  subset of an Abelian group that is convergently embedded in a compact group, is even a  Rosenthal set.
%
%
%For each $n\in \N$, pick $M_n\in \N$ such that $2^{M_n-1}\leq n\leq 2^{M_n}$, then, using the binary expansion of integer numbers $1\leq j\leq n$, we see that
% \[ F_n\subseteq \left\{ \prod_{i=1}^{M_n-1} \alpha  \left(\chi_{N(n)}\right)^{k_i 2^i}\colon k_i \in \{0,1\}\right\}.\]
% By Lemma \ref{hare88pac}, if $E$ is a $\Lambda(p)$-set for some $p>0$,  there are constants $c$ and $0<\varepsilon<1$ such that, for every $n\in \N$,
% \[\left|F_n\right|\leq c 2^{(M_n-1)\varepsilon}.\]
%On the other hand, the cardinality of $F_n$ is $n$ (the order of $\chi_{N(n)}$ is larger than $n$).
%So, if  $E$ were a $\Lambda(p)$-set for some $p>0$,
%\[n= \left|F_n\right| \leq c 2^{(M_n-1)\varepsilon}\leq c n^{\,\varepsilon}, \mbox{ for every $n\in \N$}.\]
% This contradiction shows that $E$ cannot be  $\Lambda(p)$-set for any $p>0$.
\end{proof}
\begin{remarks} \label{remabref}
We gather here some remarks that seem worthwhile to be brought forward.
\begin{enumerate}
  \item Every countable maximally almost periodic  discrete group is convergently embedded in some compact metrizable group,  the proof is very similar to that of Corollary \ref{cor:rnotlp:ab}.
  \item
To the best of our knowledge, Corollary \ref{cor:rnotlp:ab} is the first proof of the existence, in an arbitrary  Abelian group,  of Rosenthal sets that are not $\Lambda(p)$ for any $p>0$. It certainly  does not follow from the results of Blei in \cite{blei75dio}, see Remark (b) in page 197 of Blei [loc. cit.].

\item The proof of Theorem   \ref{gen:riesznotlp}
 can be greatly simplified if $\Gamma$ is Abelian.    Once the set $E$ is defined, one can apply Lemma 4.1 of \cite{rudin60} to see that $E$ is not a $\Lambda(1)$-set, so that  Lemma \ref{nonref} is not necessary. By the Corollary of \cite{hare88},  $A_E(\Gamma)$ is then nonreflexive.
\end{enumerate}
    \end{remarks}
%
%\begin{remark}
%\end{remark}
%\begin{remark}\end{remark}

For the next Corollary we recall that a (metrizable) topological group is locally precompact if it  can be embedded in a locally compact (metrizable) group.

\begin{corollary}
  If an amenable group $\Gamma$ contains a subgroup $\Lambda$ with finitely many elements of each order and $\Gamma$  admits a nondiscrete metrizable locally pre-compact topologgy, then $A(\Gamma)$  contains Arens-regular ideals that are not reflexive.\end{corollary}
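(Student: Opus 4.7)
The plan is to combine Theorem~\ref{gen:riesznotlp} with the chain of implications summarized at the end of Section~5: a Riesz ideal in a \wasabi{} algebra is automatically Arens regular. Since $\Gamma$ is discrete and amenable, $A(\Gamma)$ is a \wasabi{} algebra, so it suffices to produce a Riesz set $E\subseteq\Gamma$ for which $A_E(\Gamma)$ is non-reflexive; then $A_E(\Gamma)$ will be the desired Arens-regular, non-reflexive ideal.

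The work is therefore entirely to verify the hypotheses of Theorem~\ref{gen:riesznotlp}, i.e.\ to exhibit a countable set $A\subseteq \Gamma$ that is convergently embedded in a locally compact metrizable group $K$ and satisfies that $\langle A\rangle$ has only finitely many elements of each order. We will take $A\subseteq \Lambda$ so that, automatically, $\langle A\rangle\subseteq \Lambda$ and hence $\langle A\rangle$ has only finitely many elements of each order.

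To construct such an $A$, let $\tau$ be the nondiscrete metrizable locally pre-compact topology on $\Gamma$, and let $\widehat \Gamma$ denote its Hausdorff completion, which is a locally compact metrizable group into which $\Gamma$ embeds densely. Let $K:=\overline{\Lambda}^{\widehat\Gamma}$; as a closed subgroup of a locally compact metrizable group, $K$ is itself locally compact and metrizable, and $\Lambda$ sits inside $K$ as a dense subgroup. Since the topology restricted to $\Lambda$ is nondiscrete (which is the effective content of the hypothesis on $\Gamma$ combined with the choice of $\Lambda$), there is a sequence of pairwise distinct elements $(\chi_n)\subseteq \Lambda$ which converges in $K$ to some point~$p$. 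Passing to a subsequence we may assume $p\notin\{\chi_n:n\in\N\}$; then $A:=\{\chi_n:n\in\N\}$ is relatively compact in $K_A:=\overline{\langle A\rangle}^{\widehat\Gamma}$ and has $p$ as its unique limit point. Thus $A$ is convergently embedded in $K_A$ in the sense of Definition~\ref{def:ce}.

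With the hypotheses of Theorem~\ref{gen:riesznotlp} verified, we obtain a Riesz set $E\subseteq\Gamma$ such that $A_E(\Gamma)$ is not reflexive. By Definition~\ref{def1} and Remarks~\ref{New}(1) (or directly by the first implication in the summary at the end of Section~5), the Riesz ideal $A_E(\Gamma)$ is Arens regular, completing the proof.

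The main obstacle is the construction step: one must extract a convergent sequence that lives inside $\Lambda$ itself (so that $\langle A\rangle$ inherits the ``finitely many elements of each order'' property), and not merely in $\Gamma$. The two hypotheses of the corollary are precisely what make this possible, paralleling the mechanism of Corollary~\ref{cor:rnotlp:ab} in the Abelian case (where the Bohr topology plays the role of $\tau$ and the maximal almost periodicity of a countable subgroup with finitely many elements of each order provides the nondiscreteness on $\Lambda$).
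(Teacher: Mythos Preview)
The paper states this corollary without proof, so there is no explicit argument to compare against; your route via Theorem~\ref{gen:riesznotlp} is certainly the intended one, and the overall architecture of your proof is correct.

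There is, however, a genuine gap. You assert that ``the topology restricted to $\Lambda$ is nondiscrete (which is the effective content of the hypothesis on $\Gamma$ combined with the choice of $\Lambda$)'', but this does not follow from the hypotheses as stated. The corollary only assumes that $\Gamma$ admits a nondiscrete metrizable locally pre-compact topology and that \emph{some} subgroup $\Lambda$ has finitely many elements of each order; nothing links the two. For a concrete obstruction, take $\Gamma=\Z\times\bigoplus_{\N}\Z/2\Z$ with $\Lambda=\Z\times\{0\}$, and equip $\Gamma$ with the product of the discrete topology on $\Z$ and the topology inherited from the compact group $\prod_{\N}\Z/2\Z$ on the second factor. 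Then $\Gamma$ is amenable, $\Lambda$ has finitely many elements of each order, and $\Gamma$ carries a nondiscrete metrizable locally pre-compact topology, yet $\tau|_\Lambda$ is discrete. Worse, any sequence in $\Gamma$ converging in the completion has eventually constant $\Z$-coordinate, so the subgroup it generates contains infinitely many elements of order~$2$; thus no choice of $A\subseteq\Gamma$ meets the hypotheses of Theorem~\ref{gen:riesznotlp} via this topology.

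The most plausible reading is that the hypothesis is meant to be on $\Lambda$ rather than on $\Gamma$ (i.e.\ $\Lambda$ admits a nondiscrete metrizable locally pre-compact topology), in which case your argument goes through verbatim: take the completion of $\Lambda$, pick a convergent sequence $A$ of distinct elements, and apply Theorem~\ref{gen:riesznotlp}. You should either flag this as the intended hypothesis or, if you keep the hypothesis on $\Gamma$, supply an additional argument explaining how to produce a convergently embedded $A$ with $\langle A\rangle$ having finitely many elements of each order; the parenthetical you give does not do this.
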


\subsection{Strongly Arens irregular ideals, with and without a bai}

In a \wasabi algebra, a closed ideal with a bai stays in the \wasabi class, and so it is sAir.
In particular, as already deduced in Corollary \ref{cor:SAIBAI}, for a compact group $G$ and a discrete amenable group $\Gamma$,
the ideals $L^1_E(G)$ and $A_E(\Gamma)$ are sAir whenever, respectively,  $E\in\Omega(\widehat{G})$ and $E\in \Omega(\Gamma)$.
The natural question of whether ideals $L_E^1(G)$ and $A_E(\Gamma)$ not issued from the (hyper-)coset ring may be sAir seems to be in order.
The following general construction  yields such ideals.

%It is not easy to produce natural examples of sAir ideals in $L^1(G)$ or $A(\Gamma)$ which do not have the form $L_E^1(G)$, $A_E(\Gamma)$, with $E$ in the (hyper-)coset ring. The following is a general construction that yields such examples.

\begin{theorem}\label{gen:bairef} Let $ \mA $ be a Banach algebra Let $J_B$ and $J_1$ be closed ideals of $\mA$, such that $J_B$ has a  bai and is sAir.  If $J_B$ and $J_1$ are orthogonal, i.e.,  $J_B J_1 = J_1 J_B =\{0\} $, then
the internal direct sum  $J_0= J_B \oplus  J_1$ is a closed ideal and
\[\mathcal Z_i(J_0^\bd)= \mathcal Z_i(J_B^\bd)+\mathcal Z_i(J_1^\bd),\quad i=1,2.\]
%that is  SAI.
\end{theorem}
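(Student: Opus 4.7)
My plan is to reduce the statement to a general fact about Arens products on direct sums of orthogonal Banach algebras.  The bulk of the work is to set up the decomposition $J_0^{\bd}=J_B^{\bd}\oplus J_1^{\bd}$ as a topological (and Arens) direct sum; once this is in place, the computation of topological centres splits coordinate-wise essentially by inspection.

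First, I would establish that $J_0=J_B\oplus J_1$ is a topological direct sum (in particular a closed ideal of $\mA$).  Let $(e_\alpha)_\alpha\subseteq J_B$ be a bai bounded by $K$.  For $a+b\in J_0$ with $a\in J_B,\, b\in J_1$, the orthogonality $J_BJ_1=\{0\}$ gives $e_\alpha(a+b)=e_\alpha a\to a$, so
\[\|a\|\le K\,\|a+b\|.\]
Thus the projection $a+b\mapsto a$ is bounded, and hence so is $a+b\mapsto b$.  Consequently $J_0$ is closed in $\mA$ and the norm on $J_0$ is equivalent to the $\ell^1$-sum norm of $J_B\oplus_1 J_1$.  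Taking biduals, I obtain a topological isomorphism
\[J_0^{\bd}\;\cong\;J_B^{\bd}\oplus J_1^{\bd},\]
and the continuous projections extend to weak${}^\ast$-weak${}^\ast$ continuous projections of $J_0^{\bd}$ onto $J_B^{\bd}$ and $J_1^{\bd}$, respectively.  Under this identification, $J_B^{\bd}$ and $J_1^{\bd}$ are embedded into $J_0^{\bd}$ as the weak${}^\ast$-closures of $J_B$ and $J_1$.

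Next, I would show that both Arens products on $J_0^{\bd}$ are coordinate-wise, i.e., for $p_1,p_2\in J_B^{\bd}$ and $q_1,q_2\in J_1^{\bd}$,
\[(p_1+q_1)(p_2+q_2)=p_1p_2+q_1q_2\quad\text{and}\quad (p_1+q_1)\diamondsuit(p_2+q_2)=p_1\diamondsuit p_2+q_1\diamondsuit q_2,\]
where the Arens products on the right are computed inside $J_B^{\bd}$ and $J_1^{\bd}$.  Pick nets $(a_n^i)_n\subseteq J_B$ and $(b_n^i)_n\subseteq J_1$ with $a_n^i\to p_i$ and $b_n^i\to q_i$ weak${}^\ast$ ($i=1,2$).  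Since $J_BJ_1=J_1J_B=\{0\}$,
\[(a_n^1+b_n^1)(a_m^2+b_m^2)=a_n^1a_m^2+b_n^1b_m^2,\]
and the iterated weak${}^\ast$-limit defining the first Arens product gives $p_1p_2+q_1q_2$, thanks to the weak${}^\ast$-continuity of the projections obtained in the previous step.  The same computation with the roles of the limits reversed gives the corresponding statement for $\diamondsuit$.  Alternatively, one can test against elements of $J_0^\ast\cong J_B^\ast\oplus J_1^\ast$ and invoke the description of $m\cdot\varphi$ at the beginning of Section~2.

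With the coordinate-wise product in hand, the final step is immediate.  If $m=p+q\in\mathcal Z_1(J_0^{\bd})$ with $p\in J_B^{\bd}$, $q\in J_1^{\bd}$, then for every $p'\in J_B^{\bd}$ and $q'\in J_1^{\bd}$, writing $n=p'+q'$ and using the coordinate-wise formulas,
\[pp'+qq'=(p+q)(p'+q')=(p+q)\diamondsuit(p'+q')=p\diamondsuit p'+q\diamondsuit q',\]
and the uniqueness of the decomposition forces $pp'=p\diamondsuit p'$ and $qq'=q\diamondsuit q'$.  Hence $p\in\mathcal Z_1(J_B^{\bd})$ and $q\in\mathcal Z_1(J_1^{\bd})$, giving $\mathcal Z_1(J_0^{\bd})\subseteq\mathcal Z_1(J_B^{\bd})+\mathcal Z_1(J_1^{\bd})$.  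The reverse inclusion is again immediate from the coordinate-wise formulas.  The argument for $\mathcal Z_2$ is identical.

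The main point that has to be handled carefully is the closedness of $J_0$ and the weak${}^\ast$-continuity of the projections onto $J_B^{\bd}$ and $J_1^{\bd}$; without this, one cannot commute weak${}^\ast$-limits past the decomposition and the coordinate-wise formula for the Arens product collapses.  The existence of the bai in $J_B$ (together with orthogonality) is exactly what delivers this continuity, so no further hypothesis is needed.
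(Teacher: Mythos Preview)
Your proof is correct and follows essentially the same route as the paper's: both use the bai of $J_B$ together with orthogonality to produce a bounded projection $J_0\to J_B$ (hence closedness of $J_0$ and the splitting $J_0^{\bd}\cong J_B^{\bd}\oplus J_1^{\bd}$), then observe that the Arens products are coordinate-wise, from which the centre identity is read off. Your presentation is slightly more explicit about the weak$^\ast$-continuity of the projections and the iterated-limit computation, while the paper phrases the bound on the projection via the mixed identity $e_B\in J_B^{\bd}$; these are equivalent, and neither argument uses the sAir hypothesis on $J_B$, which is only needed for the subsequent corollary.
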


\begin{proof}
  We first see that    $J_0$ is a  closed ideal. Let $e_B$ be the right identity in $J_B^{**}$ associated with the bai in $J_B.$
	If $(u_n)_n$ is a sequence contained in $J_B$ and $(v_n)_n$ is a sequence contained in $J_1$ such that $\lim_n u_n+v_n=a\in \mA$, then $e_Bu_n=u_n$ and $e_Bv_n=0$ for every $n,$ and so
	\[\lim_n u_n=\lim_n e_B   u_n=\lim_n e_B(u_n+v_n)=e_B a \in J_B.\]
	%(\sigma(\mA^\bd,\mA^\ast)-\text{convergence}).\]
	%\textcolor{red}{Why?}
	
	This automatically implies that $\lim_n v_n=a-e_B  a$ and hence that $a\in J_0$.

  We next see that $J_0^\bd=J_B^\bd \oplus J_1^\bd$. This will follow at once if we see that
  \begin{equation}
    \label{normbd}
    \norm{a+b}\geq \norm{a}, \mbox{ whenever $a\in J_B$ and $b\in J_1$}.
  \end{equation}
  To see this, let $\varepsilon>0$ and let $\phi \in \mA^\ast$, $\norm{\phi}\leq 1$ be such that
  \[ \norm{a}\leq \left|\<{a,\phi}\right|+\varepsilon.\]
  Since $\<{a,e_B\cdot \phi}=\<{a,\phi}$ and $\<{b,e_B\cdot\phi}=0$, one has that
  \[\norm{a+b}\geq \left|\<{a+b,e_B\cdot \phi}\right|=\left|\<{a,\phi}\right|\geq \norm{a}-\varepsilon,\]
 and inequality \eqref{normbd} is satisfied.
 %Hence,  for any bounded   net of the form $(a_\alpha+b_\alpha)_\alpha$, $a_\alpha \in J_B^\bd$, $b_\alpha\in J_1^\bd$, one has that %$(a_\alpha)_\alpha$ is bounded as well.

Let now $i=1$. The proof is the same when $i=2.$
Let $m,q\in  J_0^\bd$. By the above, we write $m=m_B+m_1$ and  $q=q_B+q_1$ with $m_B, q_B\in J_B^\bd$ and $m_1, q_1\in J_1^\bd$.
Then, \[m_1  q_B=m_1\lozenge q_B=m_Bq_1=m_B\lozenge q_1=0.\] So,\begin{equation}\label{centre}\begin{split}
m   q&=(m_B+m_1)  (q_B+q_1) =m_B  q_B+m_1q_1\mbox{ and }\\m \lozenge q &=(m_B+m_1)\lozenge (q_B+q_1) =m_B\lozenge q_B+m_1\lozenge q_1.\end{split}
\end{equation}

%\begin{align*}m   q&=(m_B+m_1)  q =m_B  q\mbox{ and }\\m \lozenge q &=(m_B+m_1)\lozenge q =m_B\lozenge q.\end{align*}
%Therefore, $m  q=m\lozenge q$ if and only if $m_B  q=m_B\lozenge q.$

 Accordingly, if $m\in  \mathcal Z_1(J_0^\bd)$,
then taking $q_1=0$ in (\ref{centre}), we find $m_B\in \mathcal Z_1(J_B^\bd)$. Taking $q_B=0$ in (\ref{centre}), we find
$m_1\in \mathcal Z_1(J_1^\bd).$ Thus,
$\mathcal Z_1(J_0^\bd)\subseteq \mathcal Z_1(J_B^\bd)+\mathcal Z_1(J_1^\bd).$
The reverse inclusion is also quick as it is clear that $mq=m\lozenge q$ if  $m_B q_B=m_B\lozenge q_B$ and $ m_1 q_1=m_1\lozenge q_1.$
\end{proof}

\begin{corollary}
  \label{cor:sum}
  Let $ \mA $ be a Banach algebra in \wasabi. Let $J_B$ and $J_1$ be closed ideals of $\mA$ with $J_B J_1=J_1 J_B=\{0\}$, such that $J_B$ has a  bai. Let $J_0=J_B\oplus J_1$.
   \begin{enumerate}
     \item If $J_1$ is reflexive, then $J_0$ is sAir.
     \item If $J_1$ is Arens regular but not reflexive, then $J_0$ is neither Arens regular nor sAir
   \end{enumerate}
   \end{corollary}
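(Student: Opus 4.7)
My plan is to deduce the corollary directly from Theorem \ref{gen:bairef}. The key preliminary observation is that $J_B$ is itself a \wasabi algebra: weak sequential completeness is inherited from $\mA$, the bai is given by hypothesis, and $J_B$ is an ideal in its second dual because $J_B$ is an ideal of $\mA$, which is in turn an ideal in $\mA^{**}$. In particular $J_B$ is sAir, so $\mathcal{Z}_i(J_B^{**}) = J_B$ for $i=1,2$, and Theorem \ref{gen:bairef} applies to give
\[\mathcal{Z}_i(J_0^{**}) = J_B + \mathcal{Z}_i(J_1^{**}), \quad i=1,2.\]

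For (i), reflexivity of $J_1$ means $J_1^{**}=J_1$ and hence $\mathcal{Z}_i(J_1^{**}) = J_1$. The display becomes $\mathcal{Z}_i(J_0^{**}) = J_B + J_1 = J_0$, so $J_0$ is sAir.

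For (ii), Arens regularity of $J_1$ gives $\mathcal{Z}_i(J_1^{**}) = J_1^{**}$, whence $\mathcal{Z}_i(J_0^{**}) = J_B + J_1^{**}$. Since $J_1$ is not reflexive, $J_0 = J_B + J_1 \subsetneq J_B + J_1^{**}$, so $J_0$ fails to be sAir. For the failure of Arens regularity one compares with $J_0^{**} = J_B^{**} + J_1^{**}$ and needs $J_B \subsetneq J_B^{**}$. This is the only non-mechanical step and is where I expect the main (minor) obstacle. I would resolve it by combining Proposition \ref{wsc} with Theorem \ref{thm2}: any Arens regular \wasabi algebra is small-1-1, and a small-1-1 sAir ideal is reflexive; contrapositively, a non-reflexive \wasabi ideal such as $J_B$ cannot be Arens regular, so $J_B \neq J_B^{**}$. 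In the degenerate case where $J_B$ happens to be reflexive, $J_0$ would itself be Arens regular and the statement of (ii) would be vacuous for such $J_B$, so the corollary implicitly assumes $J_B$ non-reflexive in its intended applications.
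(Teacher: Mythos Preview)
Your argument is essentially the paper's: both reduce everything to the formula $\mathcal{Z}_i(J_0^{\ast\ast})=\mathcal{Z}_i(J_B^{\ast\ast})+\mathcal{Z}_i(J_1^{\ast\ast})$ from Theorem \ref{gen:bairef}, observe that $J_B$ is itself \wasabi\ and hence sAir, and then read off (i) and the ``not sAir'' half of (ii) directly. Those parts are correct and match the paper.

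For the ``not Arens regular'' half of (ii) you rightly isolate the one non-formal point, namely that $J_B\subsetneq J_B^{\ast\ast}$ is needed; the paper's own proof uses this too, picking $m\in J_B^{\ast\ast}\setminus J_B$. Your attempt to \emph{derive} this from Proposition \ref{wsc} and Theorem \ref{thm2}, however, is circular: the chain ``Arens regular \wasabi\ $\Rightarrow$ small-1-1 $\Rightarrow$ (with sAir) reflexive'' only yields the implication ``$J_B$ non-reflexive $\Rightarrow$ $J_B$ not Arens regular'', and you then feed it the very hypothesis ``$J_B$ non-reflexive'' that you are trying to establish. Nothing in the stated hypotheses forces $J_B$ to be non-reflexive.

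Your closing remark also needs correction: if $J_B$ happens to be reflexive (for instance $J_B=L^1_{\{0\}}(\T)$ and $J_1=L^1_{\N}(\T)$, where $J_1$ is Arens regular and non-reflexive), then $J_0^{\ast\ast}=J_B\oplus J_1^{\ast\ast}=\mathcal{Z}_i(J_0^{\ast\ast})$ and $J_0$ \emph{is} Arens regular. So statement (ii) is not vacuous in that case, it is \emph{false}; the assumption that $J_B$ is non-reflexive is genuinely needed, and is implicitly present both in the paper's proof and in every application made of the corollary, where $J_B$ is always infinite-dimensional.
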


   \begin{proof}
     Assume first that $J_1$ is reflexive and let $m\in \mathcal Z_1(J_0^\bd)$. By Theorem \ref{gen:bairef}, $m=u+m_1$ with $u\in J_B$ and $m_1\in J_1^\bd$. Since  $J_1$  is reflexive, it follows right away that $m\in J_0$.

     Assume now that $J_1$ is Arens regular but not reflexive. Let $m_1\in J_1^\bd \setminus J_1$. Then, given $q=q_B+q_1\in J_0^\bd$, $q_B\in J_B^\bd$, $q_1\in J_1^\bd$,
     \begin{align*}
       m_1  q&=m_1  q_1 \mbox{ and } \\
m_1\lozenge q&=m_1\lozenge q_1.
     \end{align*}
     Since $J_1$ is Arens regular, we see that $m_1\in \mathcal Z_1(J_0^\bd)$ and so $J_0 $ is not sAir.

     On the other hand,  Theorem \ref{gen:bairef} shows that no  $m\in J_B^\bd\setminus J_B$ can be in
		$\mathcal Z_1(J_0^\bd)$, whence the Arens irregularity of $J_0$.
   \end{proof}
	
We can now  find examples of sAir closed ideals  that have no bai in many Fourier algebras.

\begin{example}
Let $\Gamma $ be a discrete amenable group. Assume that $\Gamma $  contains   a proper  infinite subgroup $\Lambda$.
Then $A(\Gamma)$ contains an ideal that is sAir but contains no bai.
\end{example}

\begin{proof}
Since $ \Lambda$ is infinite, $\Gamma\setminus \Lambda$ is infinite. By \cite[Theorem 1]{pica73}, we can find  $F\subseteq \Gamma\setminus \Lambda$ which  is an infinite $\Lambda(4)$-set, $A_F(\Gamma)$ is then reflexive by Lemma \ref{lambda1=ref,ag}. If $E=\Lambda\cup F$, Corollary \ref{cor:sum} shows that $A_E(\Gamma)=A_\Lambda(\Gamma)\oplus A_F(\Gamma)$ is sAir.
   If $A_E(\Gamma)$ had a bai, then $E\in \Omega(\Gamma)$ and so $F\in \Omega(\Gamma)$ so that  $A_F(\Gamma)$ has a bai. But this latter ideal is reflexive, so $A_F(\Gamma)$ should have  an identity, i.e., $\Cf{F}\in A_F(\Gamma)$ which is impossible, for  $A_F(\Gamma) \subseteq c_0(\Gamma)$.
\end{proof}
\begin{example}
  Let $\Gamma$ be an amenable group that contains a proper  infinite subgroup $\Lambda$. If $\Lambda$ is either Abelian or satisfies the hypothesis of Theorem \ref{gen:riesznotlp}, then $A(\Gamma)$ contains ideals that are neither Arens regular not sAir.
\end{example}
\begin{proof}
  We use Theorem \ref{gen:riesznotlp} to  find  a Riesz set $E\subseteq \Lambda$ such that $A_E(G)$ is not reflexive. Let $\gamma \in \Gamma\setminus \Lambda$. Then $F_B=A_{\gamma \Lambda}(\Gamma)$ is sAir and $F_1=A_E(G)$ is Arens regular but not reflexive. Since $F_B F_1=\{0\}$, we can apply Corollary \ref{cor:sum}.
\end{proof}

\bibliographystyle{amsplain}
%\bibliography{../bibliografias/bibrep}

\begin{thebibliography}{10}





\bibitem{A1}
R.  Arens, \emph{The adjoint of a bilinear operation}, Proc. Amer. Math.
  Soc. \textbf{2} (1951), 839--848.

\bibitem{A2}
\bysame, \emph{Operations induced in function classes}, Monatsh. Math.
  \textbf{55} (1951), 1--19.
	
	
	
%\bibitem{BE} G. Bachelis and S. Ebenstein, On $\Lambda(p)$ sets, Pacific J. Math. 54 (1974), 35--38.

	
	
\bibitem{balapy} J.W. Baker, A. T. -M. Lau and J. S. Pym, \emph{ Module
homomorphisms and topological centres associated with weakly sequentially
complete Banach algebras,} J. Funct. Anal. {\bf158} (1998), 186--208.
%
\bibitem{bochner} S.
Bochner,
Boundary values of analytic functions in several variables and of almost periodic functions.
Ann. of Math. (2) 45 (1944), 708--722.
%
%
%\bibitem{Boz} M. Bojejko,
%\emph{The existence of $\Lambda(p)$-sets in discrete noncommutative groups,}
%Boll. Un. Mat. Ital. (4) 8 (1973), 579--582.

\bibitem{blei75dio}
R.~C. Blei, \emph{A simple {D}iophantine condition in harmonic analysis},
  Studia Math. \textbf{52} (1974/75), 195--202.
	

%  \bibitem{bouzfila11}
%A. Bouziad and M. Filali, \emph{On the size of quotients of function
%  spaces on a topological group}, Studia Math. \textbf{202} (2011), no.~3,
%  243--259.
	\bibitem{boze73}
M. Bo\.{z}ejko, \emph{The existence of {$\Lambda (p)$} sets in discrete
  noncommutative groups}, Boll. Un. Mat. Ital. (4) \textbf{8} (1973), 579--582.

	
%\bibitem{brumm87}
%R.~G.~M. Brummelhuis, \emph{An {F}. and {M}. {R}iesz theorem for bounded
%  symmetric domains}, Ann. Inst. Fourier (Grenoble) \textbf{37} (1987), no.~2,
%  139--150. \MR{898935}
 \bibitem{civin-yood} P.~Civin \and B.~Yood, \emph{ The second conjugate space of a
Banach algebra as an algebra,} Pacific J. Math. {\bf11} (1961),   847--870.

\bibitem{dal00}
H.~G. Dales, \emph{Banach algebras and automatic continuity}, The Clarendon
 Press, Oxford University Press, New York, 2000.

% 	\bibitem{cygan} Cygan, Jacek,
%Riesz products on non-commutative groups.
%Studia Math. 51 (1974), 115--123



	
  \bibitem{DL05}
H.~G. Dales and A.~T.-M. Lau, \emph{The second duals of {B}eurling algebras},
  Mem. Amer. Math. Soc. \textbf{177} (2005), no.~836, vi+191.
	
	\bibitem{DLS}
  H. G. Dales, A. T.-M. Lau, D. Strauss, \textit{Banach algebras on semigroups and on their compactifications,} Mem. Amer. Math. Soc. \textbf{205} (2010), vi+165 pp.
	
	
	
	\bibitem{Day} {M. M. Day,} {\em Amenable semigroups,}
Illinois J. Math. {\bf 1} (1957), 509--544.

\bibitem{drespigno74}
R. E. Dressler and L. Pigno, \emph{Rosenthal sets and {R}iesz sets},
  Duke Math. J. \textbf{41} (1974), 675--677.
%
%\bibitem{Dr} S. W. Drury,\emph{ Sur les ensembles de Sidon,} C. R. Acad. Sci. Paris, 271A:
% (1970), 162--163.

\bibitem{EFG}
 R.~  Esmailvandi, M.~{F}ilali and J.~{G}alindo, \emph{Arens regularity of ideals of the group algebra}, (2022),  preprint. To appear in Proc. Roy. Soc. Edinburgh Sect. A.


	
%
%	\bibitem{Fig}
%Fig\'a-Talamanca, Alessandro; Picardello, Massimo Multiplicateurs de A(G) qui ne sont pas dans B(G). (French) C. R. Acad. Sci. Paris S\'er. A-B 277 (1973), A117--A119.
%


\bibitem{filagali18} M.~{F}ilali and J.~{G}alindo,
 \emph{Extreme non-{A}rens regularity of the
  group algebra}, Forum Math. \textbf{30} (2018), no.~5, 1193--1208.


\bibitem{filagali20}
\bysame,
\newblock $\ell^1$-bases in banach algebras and arens irregularities in
  harmonic analysis.
\newblock In Mahmoud Filali, editor, {\em Banach Algebras and Applications},
  pages 95--132. De Gruyter, 2020.
%  \bibitem {FGnotyet}


\bibitem{filagali21}\bysame,
 \emph{On the extreme non-Arens regularity of
  banach algebras}, J. London Math. Soc, \textbf{104} (2021), no~2, 1840--1860.

\bibitem{filagali22}
\bysame, \emph{Orthogonal $\ell_1$-sets and extreme non-Arens regularity of
  preduals of von Neumann algebras}, J. Math. Anal. Appl. \textbf{512} (2022), 126137.






 \bibitem{FiSi} {M. Filali \and A. Singh}, {\em Recent developments on Arens regularity and ideal structure of the second dual of a group algebra and some related topological algebras,}
 Est. Math. Soc., Tartu (2001)  95--124.


%	\bibitem{B}
%Fig\'a-Talamanca, Alessandro; Picardello, Massimo Multiplicateurs de A(G) qui ne sont pas dans B(G). (French) C. R. Acad. Sci. Paris S\'er. A-B 277 (1973), A117--A119.


%\bibitem{Finet} Finet, Catherine, Lacunary sets for groups and hypergroups. (English summary) J. Austral. Math. Soc. Ser. A 54 (1993), no. 1, 39--60
	
\bibitem{forrkanilauspro03}
B.~Forrest, E.~Kaniuth, A.~T. Lau, and N.~Spronk, \emph{Ideals with bounded
  approximate identities in {F}ourier algebras}, J. Funct. Anal. \textbf{203}
  (2003), no.~1, 286--304.
%
%	\bibitem{JH} Galindo, Jorge (E-JAU); Hern\'andez, Salvador (E-JAU)
%On a theorem of van Douwen.
%Extracta Math. 13 (1998), no. 1, 115--123.
%	
\bibitem{giultrav80}
S. Giulini and G. Travaglini, \emph{{$L^{p}$}-estimates for matrix
  coefficients of irreducible representations of compact groups}, Proc. Amer.
  Math. Soc. \textbf{80} (1980), no.~3, 448--450.
	
\bibitem{Glick} I.
Glicksberg,
Fourier-Stieltjes transforms with small supports.
Illinois J. Math. 9 (1965), 418--427.

\bibitem{gode88}
Gilles Godefroy, \emph{On {R}iesz subsets of Abelian discrete groups}, Israel
  J. Math. \textbf{61} (1988), no.~3, 301--331.

	%		\bibitem{Godef2}
%Godefroy, Gilles On coanalytic families of sets in harmonic analysis. Illinois J. Math. 35 (1991), no. 2, 241--249.
%%	\bibitem{CoKa} Graham, Colin C. (3-BC); Hare, Kathryn E. (3-WTRL-PM)
%Interpolation and Sidon sets for compact groups.
%CMS Books in Mathematics/Ouvrages de Math\'ematiques de la SMC. Springer, New York, 2013. xviii+249 pp.

%
%	\bibitem{G}
%Graham, Colin C.
%Fourier-Stieltjes transforms with small supports.
%Illinois J. Math. 18 (1974), 532--534.

	\bibitem{Grosser}	M. Grosser, "Bidualräume und Vervollständigungen von Banachmoduln," Lecture Notes in Mathematics, Vol. 717, Springer-Verlag, Berlin, 1979.




\bibitem{harc99}
A. Harcharras, \emph{Fourier analysis, {S}chur multipliers on {$S^p$} and
  non-commutative {$\Lambda(p)$}-sets}, Studia Math. \textbf{137} (1999),
  no.~3, 203--260.

\bibitem{hare88pac}
K.~E. Hare, \emph{Arithmetic properties of thin sets}, Pacific J. Math.
  \textbf{131} (1988), no.~1, 143--155.

\bibitem{hare88}
\bysame, \emph{An elementary proof of a result on {$\Lambda(p)$} sets}, Proc.
  Amer. Math. Soc. \textbf{104} (1988), no.~3, 829--834.


\bibitem{hewiross2}
E. Hewitt and K.~A. Ross, \emph{Abstract harmonic analysis. {V}ol.
  {I}{I}: {S}tructure and analysis for compact groups. {A}nalysis on locally
  compact {A}belian groups}, Springer-Verlag, New York, 1970.

\bibitem{hutc77}
M.~F. Hutchinson, \emph{Non-tall compact groups admit infinite {S}idon sets},
  J. Austral. Math. Soc. Ser. A \textbf{23} (1977), no.~4, 467--475.



	
	
%	\bibitem{KR}
%K. E. Hare and L. T. Ramsey, \emph{
%The ubiquity of Sidon sets that are not $I_0$,}
%Acta Sci. Math. (Szeged) 82 (2016), no. 3-4, 509--518.
%
%
%
%
%
%	\bibitem{HRN1}{
%S. Hartman and C. Ryll-Nardzewski,} Almost periodic extensions of functions. Colloq. Math.,
%12:23 39, 1964.
%
%
%
%
%
%
%	\bibitem{HRN2}{ S. Hartman and C. Ryll-Nardzewski, }Almost periodic extensions of functions. II. Colloq. Math.,
%15:79 86, 1966.
%
%
%
%



%\bibitem{Hew70} E. Hewitt and K. A. Ross, \emph{ Abstract harmonic analysis I, II},
%\newblock Springer-Verlag,  Berlin, 1963, 1970.

%
%\bibitem{Yam96}
% Yamaguchi, Hiroshi An F. and M. Riesz theorem on compact groups. Infinite-dimensional harmonic analysis (Tübingen, 1995), 249--273, Gräbner, Tübingen, 1996.
%
%
%\bibitem{Yam99}
%Yamaguchi, Hiroshi,
%On the product of Riesz sets in dual objects of compact groups. (English summary) Infinite dimensional harmonic analysis (Kyoto, 1999), 360--372, Gräbner, Altendorf, 2000.


\bibitem{IPU87}
N. I\c{s}\i~k, J. Pym, and A. \"{U}lger, \emph{The second dual of
  the group algebra of a compact group}, J. London Math. Soc. (2) \textbf{35}
  (1987), no.~1, 135--148.
	
\bibitem{kanilauschl03}
E.~Kaniuth, A.~T. Lau, and G.~Schlichting, \emph{Lebesgue type decomposition of
  subspaces of {F}ourier-{S}tieltjes algebras}, Trans. Amer. Math. Soc.
  \textbf{355} (2003), no.~4, 1467--1490 (electronic).
\bibitem{kanibook}
E. Kaniuth, \emph{A course in commutative {B}anach algebras}, Graduate
  Texts in Mathematics, vol. 246, Springer, New York, 2009.

\bibitem{kanilaubook}
E. Kaniuth and A. T. Lau, \emph{Fourier and
  {F}ourier-{S}tieltjes algebras on locally compact groups}, Mathematical
  Surveys and Monographs, vol. 231, American Mathematical Society, Providence,
  RI, 2018.
	%\bibitem{Kan09} 	
%Kaniuth, Eberhard A course in commutative Banach algebras. Graduate Texts in Mathematics, 246. Springer, New York, 2009. xii+353 pp.

	\bibitem{K2000}
 E. Kaniuth, \emph{Ideals with bounded approximate units in L1-algebras of locally compact groups}. Math. Proc. Cambridge Philos. Soc. 128 (2000), no. 1, 65–77.
\bibitem{katz}
Y. Katznelson, \emph{An introduction to harmonic analysis}, corrected ed.,
  Dover Publications Inc., New York, 1976.

\bibitem{koshi01}
S.~Koshi, \emph{The {F}. and {M}. {R}iesz theorem on groups}, Taiwanese J.
  Math. \textbf{5} (2001), no.~3, 471--476.


 \bibitem{lau81} A. T. Lau,  \emph{The second conjugate algebra of the {F}ourier algebra of a
              locally compact group,} Trans. Amer. Math. Soc. \textbf{267}  (1983), 53--63.

\bibitem{LL}
A. T. Lau and V. Losert, The C$^*$-algebra generated by operators with compact support on a locally compact group, J. Funct. Anal. 112 (1993), 1–30

\bibitem{lauwong13}
A. T. Lau and N. Wong, \emph{Orthogonality and disjointness
  preserving linear maps between {F}ourier and {F}ourier-{S}tieltjes algebras
  of locally compact groups}, J. Funct. Anal. \textbf{265} (2013), no.~4,
  562--593.

\bibitem{li93}
D. Li, \emph{A class of {R}iesz sets}, Proc. Amer. Math. Soc. \textbf{119}
  (1993), no.~3, 889--892.


%	\bibitem{Liremark}{ Li, Daniel } A remark about  $\Lambda(p)$ -sets and Rosenthal sets. (English, French summary) Proc. Amer. Math. Soc. 126 (1998), no. 11, 3329--3333


	%\bibitem{MS} { De-Michele, Leonede; Soardi, Paolo M.} Existence of Sidon sets in discrete FC-groups. Proc. Amer. Math. Soc. 55 (1976), no. 2, 457--460.


%
%	\bibitem{HRN1}{
%S. Hartman and C. Ryll-Nardzewski,} \emph{ Almost periodic extensions of functions,} Colloq. Math.,
%12:23 39, 1964.



%
%	\bibitem{HRN2}{ S. Hartman and C. Ryll-Nardzewski, }  \emph{Almost periodic extensions of functions,} II. Colloq. Math.,
%15:79 86, 1966.




	

    %	
    %	\bibitem{LiQue} D. Li, and H. Queffelec. Introduction a l'Etude des Espaces de Banach,
    %Analyse et Probabilites. Societe Mathematique de France, Paris, 2004.

\bibitem{liurooijwang73}
T. Liu, A. van Rooij, and J. Wang, \emph{Projections and
  approximate identities for ideals in group algebras}, Trans. Amer. Math. Soc.
  \textbf{175} (1973), 469--482.

	
	\bibitem{Mac}
H. A. MacLean, \emph{Riesz sets and a theorem of Bochner}. Pacific J. Math. 113 (1984), no. 1, 115--135.

		

    \bibitem{miao99}
T. Miao, \emph{Decomposition of {$B(G)$}}, Trans. Amer. Math. Soc.
  \textbf{351} (1999), no.~11, 4675--4692.
		
\bibitem{pica73}
M. A. Picardello, \emph{Lacunary sets in discrete noncommutative
  groups}, Univ. Genova Pubbl. Ist. Mat. (2) \textbf{60} (1973), ii+24.

%
% 	\bibitem{pisier}
%G. Pisier, \emph{
%Spectral gap properties of the unitary groups: around Rider's results on non-commutative Sidon sets,} (English summary) 50 years with Hardy spaces, 417--465,
%Oper. Theory Adv. Appl., 261, Birkh\"auser/Springer, Cham, 2018.
\bibitem{pisixu03}
G. Pisier and Q. Xu, \emph{Non-commutative {$L^p$}-spaces}, Handbook
  of the geometry of {B}anach spaces, {V}ol. 2, North-Holland, Amsterdam, 2003,
  pp.~1459--1517.
%	\bibitem{price}
% J. F. Price, 'Non a sono insemi infiniti di tipo $\Lambda(p)$ per SU(2)', Boll. Un. Mat. ltd.
%4(4) (1971), 879--881.
	
	
	\bibitem{pym} {J. S. Pym}, {\em The convolution of
    functionals on spaces of bounded functions,}  Proc. London
  Math. Soc. (3) \textbf{15} (1965), 84--104.


	
\bibitem{reit72}
H. Reiter, \emph{{$L^{1}$}-algebras and {S}egal algebras}, Conference on
  {H}armonic {A}nalysis ({U}niv. {M}aryland, {C}ollege {P}ark, {M}d., 1971),
  Lecture Notes in Math., Vol. 266, Springer, Berlin-New York, 1972,
  pp.~281--285.
	
\bibitem{rider70}
D. Rider, \emph{Central idempotent measures on unitary groups}, Canadian J.
  Math. \textbf{22} (1970), 719--725.
	%
%	
%\bibitem{Rider70} D. Rider, \emph{
%$SU(n)$  has no infinite local $\Lambda_p$ sets,} (Italian summary)
%Boll. Un. Mat. Ital. (4) 12 (1975), no. 1-2, 155--160.
\bibitem{rider73}
\bysame, \emph{Central idempotent measures on compact groups}, Trans. Amer.
  Math. Soc. \textbf{186} (1973), 459--479 (1974).

\bibitem{robi96}
D. J.~S. Robinson, \emph{A course in the theory of groups}, second ed.,
  Graduate Texts in Mathematics, vol.~80, Springer-Verlag, New York, 1996.

%
%	\bibitem{Rob} D. J. S. Robinson, Finiteness conditions and generalized soluble groups, Ergebnisse der Mathematik und ihrer Grenzgebiete, Band 62, Springer-Verlag, Berlin and New York.

 \bibitem{rosenthal} H.~P.~Rosenthal, \emph{Some recent discoveries in the isomorphic theory of Banach spaces,} Bull. Amer. Math. Soc.  \textbf{84}
 (1978), 803--831.

	\bibitem{rudin60}
W. Rudin, \emph{Trigonometric series with gaps}, J. Math. Mech. \textbf{9}
  (1960), 203--227.

\bibitem{rudin62}
W.  Rudin.\newblock
 \emph{Fourier analysis on groups}
\newblock of John Wiley and Sons, 1962.

\bibitem{take02}
M.  Takesaki, \emph{Theory of operator algebras. {I}}, Encyclopaedia of
  Mathematical Sciences, vol. 124, Springer-Verlag, Berlin, 2002, Reprint of
  the first (1979) edition, Operator Algebras and Non-commutative Geometry, 5.



\bibitem{ulger99}
    A. \"{U}lger,  \emph{Central elements of {$A^{\ast\ast}$} for certain {B}anach
              algebras {$A$} without bounded approximate identities},
  {Glasg. Math. J.}
    \textbf{4} (1999), 369-377.
		

	
	\bibitem{U} A.  \"Ulger,  \emph{Characterizations of {R}iesz sets}, Math. Scand. \textbf{108}
  (2011), no.~2, 264--278.

%\bibitem{yama96}
%Hiroshi Yamaguchi, \emph{An {F}. and {M}. {R}iesz theorem on compact groups},
%  Infinite-dimensional harmonic analysis ({T}\"{u}bingen, 1995), Gr\"{a}bner,
%  T\"{u}bingen, 1996, pp.~249--273. \MR{1406587}

\bibitem{wojtbook} P. Wojtaszczyk,
\newblock  \emph{Banach spaces for analysts}
Cambridge Studies in Advanced Mathematics, 1991.

\bibitem{wong71} P.  Wong,  \emph{On the {A}rens product and annihilator algebras},
Proc. Amer. Math. Soc. \textbf{30} (1971), 79--83.

\bibitem{Young} {N. J. Young}, {\em Separate continuity and multilinear operations,} Proc. London Math. Soc. (3)  \textbf{26} (1973), 289--319.

\end{thebibliography}

\end{document}